\theoremstyle{plain}
\newtheorem{thm}{Theorem}[section]
\newtheorem*{thm*}{Theorem}
\numberwithin{equation}{section}
\newtheorem{cor}{Corollary}[section]
\newtheorem{lem}{Lemma}[section]
\newtheorem{prop}{Proposition}[section]
\newtheorem{rem}{Remark}[section]
\newtheorem{case}{Case}
\newtheorem{defn}{Definition}[section]
\theoremstyle{definition}
\newcounter {own}
\def\theown {\thesection  .\arabic{own}}
\newenvironment{pf}[1][]{%
 \vskip 3mm
 \noindent
 \ifthenelse{\equal{#1}{}}%
  {{\slshape Proof. }}%
  {{\slshape #1.} }%
 }%
{\qed\bigskip}
\newcounter{alphabet}
\newcounter{tmp}
\newcounter{minutes}\setcounter{minutes}{\time}
\newcounter{hours}\setcounter{hours}{\time}
\begin{document}
\bibliographystyle{amsplain}
\title{Existence of an extremal of Dunkl-type Sobolev inequality and of Stein-Weiss inequality for D-Riesz potential }

\thanks{
File:~\jobname .tex,
          printed: \number\year-\number\month-\number\day,
          \thehours.\ifnum\theminutes<10{0}\fi\theminutes}

\author{Saswata Adhikari $^\dagger$}

\address{Saswata Adhikari, School of Mathematical Sciences,
NISER Bhubaneswar, Odisha-752050, India.}
\email{saswata.adhikari@gmail.com}
\author{V. P. Anoop }
\address{V. P. Anoop, School of Mathematical Sciences,
NISER Bhubaneswar, Odisha-752050, India.}
\email{anoop.vp@niser.ac.in}
\author{Sanjay Parui }
\address{Sanjay Parui, School of Mathematical Sciences,
NISER Bhubaneswar, Odisha-752050, India.}
\email{parui@niser.ac.in}
\subjclass[2010]{Primary  42B10; Secondary 33C52,35R11,35A23}
\keywords{Dunkl transform, Riesz potential, Sobolev inequality,  Stein Weiss inequality, .\\
$^\dagger$ {\tt Corresponding author}
}

\maketitle
\pagestyle{myheadings}
\markboth{Saswata Adhikari, Anoop V. P., Sanjay Parui,}{Existence of an extremal of Sobolev type inequalities associated with Dunkl gradient}

\begin{abstract}
In this paper, we prove the existence of an extremal for the Dunkl-type Sobolev inequality in case of $p=2$. Also we prove the existence of an extremal of the Stein-Weiss inequality for the D-Riesz potential in case of $r=2$. 
\end{abstract}

\section {Introduction}
The classical Sobolev inequality states that for all $u\in C_{c}^{\infty}(\mathbb{R}^{d})$, 
\begin{eqnarray}\label{5peq22}
\|u\|_{L^{q}(\mathbb{R}^{d})}\leq C \|\nabla u\|_{L^{p}(\mathbb{R}^{d})},
\end{eqnarray}
where $1\leq p<d$, $q=\frac{dp}{d-p}$ and the constant $C>0$ only depends on d. This inequality plays an important role in analysis and as such it has been studied by many for e.g. see (\cite{ehl,vg,ls}). The problem of finding sharp constant to inequality (\ref{5peq22}) was answered in \cite{gt} and therein the author found the existence of an extremal function for which the equality holds in (\ref{5peq22}) for the case $1<p<\infty$.

One may consider inequality (\ref{5peq22}) in the context of Dunkl setting, which we say Dunkl-type Sobolev inequality, by replacing the Euclidean gradient $\nabla u$ by Dunkl gradient $\nabla_{k}u$ and Lebesgue measure $dx$ by the weighted measure $w_{k}(x)dx$. Dunkl-type Sobolev inequality was obtained as a corollary of a result on Riesz transform in \cite {ba} (also see \cite {shs}). In fact, it was proved there that for $1<p<d_{k}$ with $q=\frac{d_{k}p}{d_{k}-p}$,  one has
\begin{eqnarray}\label{eqn7}
\|u\|_{L^{q}(\mathbb{R}^{d},w_{k})}\leq C_{p,q} \|\nabla_{k} u\|_{L^{p}(\mathbb{R}^{d},w_{k})},
\end{eqnarray}
for all $u\in\mathcal{S}(\mathbb{R}^{d})$.

The first aim of this paper is to prove the existence of extremals of the inequality (\ref{eqn7}) in the case of $p=2$. Towards this, for $u\in\dot{H}^{1}(\mathbb{R}^{d},w_{k})$, we consider the function
\begin{eqnarray}\label{5peq24}
F(u)=\frac{\int\limits_{\mathbb{R}^{d}}|\nabla_{k} u|^{2}w_{k}(x) dx}{\big(\int\limits_{\mathbb{R}^{d}}|u|^{q}w_{k}(x) dx\big)^{\frac{2}{q}}},
\end{eqnarray}
where $q=\frac{2d_{k}}{d_{k}-2}$ and $\dot{H}^{1}(\mathbb{R}^{d},w_{k})=\dot{W^{1,2}}(\mathbb{R}^{d},w_{k})$ with the norm $\|u\|_{\dot{H}^{1}(\mathbb{R}^{d},w_{k})}=\|\nabla_{k}u\|_{L^{2}(\mathbb{R}^{d},w_{k})}$. Our goal is to show that infimum is attained for the function $F$ when the infimum is taken over all non-vanishing functions $u\in\dot{H}^{1}(\mathbb{R}^{d},w_{k})$. Towards this we first prove a Dunkl-type refined Sobolev inequality. This type of refined Sobolev inequality on $\mathbb{R}^{d}$ is proved in more general setting in \cite{ml}. Recently, similar problem has been considered in \cite{av} by A. Velicu, wherein he shows that the function $F$ defined in (\ref{5peq24}) attains an infimum and have found the best constant. But our proof of the existence of a minimizer is different from that of \cite {av}. Our approach to this problem is mainly based on \cite{rl}.

In the Eucledean space $\mathbb{R}^{d}$, the negative powers of the Laplacian can be defined as an integral representation in terms of the Riesz potential or fractional integral operator as follows:
\begin{eqnarray*}
(-\Delta)^{-\frac{\alpha}{2}} f(x) =I_{\alpha}f(x)=(c_{\alpha})^{-1}\int\limits_{\mathbb{R}^{d}} f(y)|x-y|^{\alpha-d}dy,
\end{eqnarray*}
where $0<\alpha<d$ and $c_{\alpha}=2^{\alpha-\frac{d}{2}}\frac{\Gamma(\frac{\alpha}{2})}{\Gamma(\frac{d-\alpha}{2})}$.
One such fundamental result for the Riesz potential operator is the Stein-Weiss inequality which gives the weighted $(L^{r},L^{s})$ boundedness:
\begin{thm}\cite{es}\label{5pnth2}
Let $d\in \mathbb{N}, 1< r\leq s<\infty,\gamma >-\frac{d}{s},\beta\geq \gamma,0<\alpha<d, \beta<\frac{d}{r^{\prime}},\alpha+\gamma-\beta=d(\frac{1}{r}-\frac{1}{s})$. Then 
\begin{eqnarray}\label{5peqn14}
\||x|^{\gamma}I_{\alpha}f\|_{L^{s}(\mathbb{R}^{d})}\leq C \||x|^{\beta}f\|_{L^{r}(\mathbb{R}^{d})},~\forall~f\in L^{r}(\mathbb{R}^{d},|x|^{\beta r}).
\end{eqnarray}
\end{thm}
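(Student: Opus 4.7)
The plan is to reproduce the classical Stein--Weiss argument, which hinges on bilinear duality combined with a three-region decomposition of the integration domain. By $L^{s}$--$L^{s'}$ duality and Fubini's theorem, and after substituting $g(y)=|y|^{\beta}f(y)$, the desired estimate is equivalent to the bilinear form estimate
\[
\left|\int_{\mathbb{R}^{d}}\int_{\mathbb{R}^{d}} \frac{h(x)\,g(y)\,|x|^{\gamma}|y|^{-\beta}}{|x-y|^{d-\alpha}}\,dx\,dy\right|\le C\,\|h\|_{L^{s'}(\mathbb{R}^{d})}\|g\|_{L^{r}(\mathbb{R}^{d})}
\]
valid for all $h\in L^{s'}(\mathbb{R}^{d})$ and $g\in L^{r}(\mathbb{R}^{d})$. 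The scaling identity $\alpha+\gamma-\beta=d(1/r-1/s)$ is forced on the exponents by the homogeneity of the two sides.

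Next I would split $\mathbb{R}^{d}\times\mathbb{R}^{d}$ into three pieces governed by the relative size of $|x|$ and $|y|$, namely
\[
A_{1}=\{|y|\le|x|/2\},\qquad A_{2}=\{|y|\ge 2|x|\},\qquad A_{3}=\{|x|/2<|y|<2|x|\},
\]
and bound the bilinear form on each piece separately. On $A_{1}$ one has $|x-y|\sim|x|$, so the inner integral against $g$ becomes essentially $|x|^{\alpha-d}$ times an average of $g$ over the ball $\{|y|\le|x|/2\}$; passing to polar coordinates, its $L^{s}(|x|^{\gamma s}\,dx)$ boundedness reduces to a one-dimensional weighted Hardy inequality, for which the hypothesis $\gamma>-d/s$ is precisely the sharp threshold. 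By the symmetric swap $x\leftrightarrow y$, the region $A_{2}$ yields the companion threshold $\beta<d/r'$.

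The diagonal region $A_{3}$ is the most delicate piece. Here $|x|\sim|y|$ allows me to replace the two-weight factor by a single power $|x|^{\gamma-\beta}$, whose non-positive exponent (since $\beta\ge\gamma$) reflects the compatibility between the two weights. A dyadic decomposition $\{2^{k}\le|x|<2^{k+1}\}$ together with a scaling argument then reduces the estimate on each annulus to a unit-scale instance of the Hardy--Littlewood--Sobolev inequality; the identity $\alpha+\gamma-\beta=d(1/r-1/s)$ is exactly what makes the resulting constants independent of $k$, and the finite overlap of neighbouring annuli permits summation of the pieces. The main obstacle I anticipate is precisely this diagonal estimate: one must verify that the Hardy--Littlewood--Sobolev constants do not degenerate at either end of the dyadic scale, and ensure that the three regional bounds combine into a single constant $C$ depending only on the stated exponents, with a slightly separate treatment likely required at the endpoint $r=s$.
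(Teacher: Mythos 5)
There is nothing to compare you against inside the paper: Theorem \ref{5pnth2} is the classical Stein--Weiss inequality, quoted from \cite{es} purely as background for its Dunkl analogue (Theorem \ref{5pth1}, itself imported from \cite{dg}), and the authors give no proof of it. Judged on its own, your outline is the standard modern proof and is sound: duality, the splitting into $\{|y|\le|x|/2\}$, $\{|y|\ge 2|x|\}$ and the diagonal, weighted Hardy inequalities off the diagonal, and a dyadic reduction on the diagonal do yield the theorem, with the scaling identity $\alpha+\gamma-\beta=d(\tfrac1r-\tfrac1s)$ making the per-annulus constants uniform and $r\le s$ being exactly what allows the annular pieces to be summed by H\"older in the dyadic index (so no separate treatment of $r=s$ is really needed there). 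Two small corrections. First, your attribution of the thresholds is swapped: on $\{|y|\le|x|/2\}$ it is $\beta<d/r'$ that makes the inner integral $\int_{|y|\le|x|/2}|g(y)||y|^{-\beta}\,dy$ finite (the companion requirement $\gamma+\alpha<d/s'$ then follows from $\beta<d/r'$ and the scaling identity), while $\gamma>-d/s$ is what is needed on the dual region $\{|y|\ge 2|x|\}$, where one integrates $|x|^{\gamma}$ against $h$ near the origin; since the two regions are exchanged by your symmetric swap, this is only a labeling slip. Second, on the diagonal the hypothesis $\beta\ge\gamma$ is doing more than expressing ``compatibility'': after localizing to $|x|\sim|y|\sim 2^{k}$ and rescaling to unit scale, $\beta>\gamma$ is precisely the condition under which $|x-y|^{\alpha-d}$ is locally in the Lebesgue space required by Young's inequality, and $\beta=\gamma$ is the critical case where one must invoke genuine Hardy--Littlewood--Sobolev; this deserves an explicit line in a complete write-up. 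For the record, the original argument in \cite{es} is organized differently, through lemmas on integral operators with homogeneous kernels rather than an explicit dyadic diagonal decomposition, but your route is the one found in most modern treatments and is perfectly adequate for the statement as used in this paper.
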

The study of above kind of integral inequalities are having great importance in harmonic analysis. In \cite {Lieb}, Lieb used the method of rearrangement technique and symmetrization to prove the existence of extremals of (\ref{5peqn14}) in some cases. Later in \cite{chen}, the authors extended Lieb's result on the Heisenberg group under certain assumptions. We refer the readers to the articles \cite{beck2,beck4,han1,han2} to understand more about Stein-Weiss inequalities and its extremal functions.

S. Thangavelu and Y. Xu  in \cite {sty} defined the D-Riesz potential operator on Schwartz spaces as follows:
\begin{eqnarray*}
I_{\alpha}^{k}f(x)=(c_{\alpha}^{k})^{-1}\int\limits_{\mathbb{R}^{d}}\tau_{y}^{k}f(x)|y|^{\alpha-d_{k}}w_{k}(y)dy,
\end{eqnarray*}
where $0<\alpha<d_{k}$ and $c_{\alpha}^{k}=2^{\alpha-\frac{d_{k}}{2}}\frac{\Gamma(\frac{\alpha}{2})}{\Gamma(\frac{d_{k}-\alpha}{2})}$. The D-Riesz potential operator $I_{\alpha}^{k}f$ can also be written as 
\begin{eqnarray}\label{eqn1}
I_{\alpha}^{k}f(x)= (c_{\alpha}^{k})^{-1}\int\limits_{\mathbb{R}^{d}} f(y)\Phi(x,y) w_{k}(y)dy,
\end{eqnarray}
where $\Phi(x,y)=\tau_{y}^{k}|.|^{\alpha-d_{k}}(x)$.

In \cite{dg}, D. V. Gorbachev et al. proved the following Stein-Weiss inequality for the D-Riesz potential operator.
\begin{thm}\label{5pth1}
Let $d\in \mathbb{N}, 1< r\leq s<\infty,\gamma >-\frac{d_{k}}{s},\beta\geq \gamma,0<\alpha<d_{k}, \beta<\frac{d_{k}}{r^{\prime}},\alpha+\gamma-\beta=d_{k}(\frac{1}{r}-\frac{1}{s})$. Then 
\begin{eqnarray}\label{5peq14}
\||x|^{\gamma}I_{\alpha}^{k}f\|_{L^{s}(\mathbb{R}^{d},w_{k})}\leq C_{k} \||x|^{\beta}f\|_{L^{r}(\mathbb{R}^{d},w_{k})}~\forall~f\in L^{r}(\mathbb{R}^{d},|x|^{\beta r}w_{k}).
\end{eqnarray}
\end{thm}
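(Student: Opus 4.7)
The plan is to establish the weighted bound (\ref{5peq14}) by combining the unweighted Hardy--Littlewood--Sobolev inequality for $I_\alpha^k$ (the case $\beta = \gamma = 0$, which is essentially the mapping property of the D-Riesz potential from \cite{sty}) with a weighted Hardy inequality adapted to the Dunkl measure $w_k(x)\,dx$. The overall strategy mirrors the classical proof of Stein--Weiss, but with the Euclidean kernel $|x-y|^{\alpha-d}$ replaced by the Dunkl kernel $\Phi(x,y)=\tau_y^k|\cdot|^{\alpha-d_k}(x)$.

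The first step is to record the essential pointwise size estimate on $\Phi(x,y)$. Since $|\cdot|^{\alpha-d_k}$ is a non-negative radial function and Dunkl translation of a non-negative radial function is again non-negative (Rösler), one expects a two-sided control of the form
\[
\Phi(x,y) \lesssim \bigl[\max(|x|,|y|)\bigr]^{\alpha - d_k},
\]
together with the companion lower bound on the diagonal needed for sharpness. This is the input that replaces the classical kernel estimate.

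The second step is to split
\[
|x|^{\gamma} I_\alpha^k f(x) = |x|^\gamma (c_\alpha^k)^{-1}\!\!\int_{|y|\leq |x|}\!\! f(y)\Phi(x,y) w_k(y)\,dy + |x|^\gamma (c_\alpha^k)^{-1}\!\!\int_{|y|>|x|}\!\! f(y)\Phi(x,y)w_k(y)\,dy,
\]
and insert the kernel bound on each region, so that on $\{|y|\leq|x|\}$ the factor $|x|^{\alpha-d_k}$ comes out, and on $\{|y|>|x|\}$ the factor $|y|^{\alpha-d_k}$ is pulled in. After performing polar integration with respect to $w_k$ (which is homogeneous of degree $2\gamma_k$, so that $\int_{S^{d-1}}w_k\,d\sigma \cdot r^{d_k-1}\,dr$ is the radial measure), the problem reduces to two one-dimensional weighted Hardy operators acting on the profile of $|x|^\beta f$. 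The standard weighted Hardy inequality on the half-line then gives the required $L^r\to L^s$ bound under the three assumptions in the theorem: $\gamma>-d_k/s$ controls the integrability at $0$ in the $|y|\leq|x|$ piece, $\beta<d_k/r'$ controls it in the $|y|>|x|$ piece, and the scaling identity $\alpha+\gamma-\beta=d_k(1/r-1/s)$ is precisely the condition under which both Hardy operators map $L^r$ into $L^s$.

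The chief obstacle is the kernel estimate on $\Phi(x,y)$: unlike the Euclidean translation, $\tau_y^k$ is a nontrivial integral operator depending on the reflection group, and its action on $|\cdot|^{\alpha-d_k}$ must be analysed explicitly (for instance through Rösler's integral representation) to extract the bound $\Phi(x,y)\lesssim \max(|x|,|y|)^{\alpha-d_k}$. Once this is in place, the reduction to the weighted one-dimensional Hardy inequality is routine; the borderline case $r=s$ follows by a direct Hardy estimate, and the strict case $r<s$ by interpolating the endpoint inequality $\gamma=\beta=0$ (the Dunkl HLS) with the diagonal Hardy case $r=s$ via the Stein--Weiss interpolation trick.
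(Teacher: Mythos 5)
First, a point of calibration: the paper does not actually prove Theorem \ref{5pth1}; it is quoted from Gorbachev, Ivanov and Tikhonov \cite{dg} and used as a black box. So your proposal can only be judged against the standard Stein--Weiss-type arguments, not against a proof contained in this paper.

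Judged that way, there is a genuine gap at your very first step. The pointwise bound $\Phi(x,y)\lesssim\bigl[\max(|x|,|y|)\bigr]^{\alpha-d_k}$ is false. Already for $k\equiv 0$ one has $\Phi(x,y)=|x-y|^{\alpha-d}$, which blows up as $y\to x$, while $\max(|x|,|y|)^{\alpha-d}$ stays bounded for fixed $|x|>0$; in the general Dunkl case, Lemma \ref{5plem6}(iii) combined with (\ref{5peq26}) yields only $\Phi(x,y)\le (c_\alpha^k)^{-1}\bigl(\min_{g\in G}|gx-y|\bigr)^{\alpha-d_k}$, so the kernel is singular whenever $y$ approaches the $G$-orbit of $x$, and a bound of your form can hold only off the diagonal, say on $|y|\le|x|/2$ or $|y|\ge 2|x|$. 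Consequently your two-region splitting $\{|y|\le|x|\}$, $\{|y|>|x|\}$ silently discards the singular near-diagonal region $|y|\sim|x|$: inserting $|x|^{\alpha-d_k}$ (resp.\ $|y|^{\alpha-d_k}$) there is not a majorization, and the reduction to one-dimensional weighted Hardy operators does not control $I_\alpha^k$ on that region. A correct argument of this type must split into three regions and treat the annulus $|x|/2<|y|<2|x|$ separately, typically by applying the unweighted Dunkl Hardy--Littlewood--Sobolev inequality (\cite{shs,sty}) on dyadic annuli, where $|x|^\gamma$ and $|y|^\beta$ are comparable to powers of the annulus radius, and then summing; note moreover that the unweighted HLS relation $\frac{1}{s}=\frac{1}{r}-\frac{\alpha}{d_k}$ matches the hypothesis $\alpha+\gamma-\beta=d_k(\frac{1}{r}-\frac{1}{s})$ only when $\beta=\gamma$, so for $\beta>\gamma$ even the diagonal piece requires an additional H\"older-in-annuli or Schur/Young-type summation step --- this is exactly where the assumptions $\beta\ge\gamma$, $\gamma>-\frac{d_k}{s}$, $\beta<\frac{d_k}{r'}$ do their work, and it is the part your sketch omits. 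The closing suggestion to interpolate the unweighted HLS with the $r=s$ case cannot repair this, since it presupposes the weighted $r=s$ estimate, whose proof runs into precisely the same near-diagonal difficulty.
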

If $k\equiv 0$, then Theorem \ref{5pth1} becomes Theorem \ref{5pnth2}. When $\beta=\gamma=0$ and $G=\mathbb{Z}_2^d$, Theorem \ref{5pth1} is proved by S. Thangavelu and Y. Xu in \cite{sty} and for any reflection group $G$ with $\beta=\gamma=0$, it is proved in \cite{shs}. The weighted case was proved by C. Abdelkefi and M. Rachdi in \cite{cam} when $r=s$ under more restrictive assumptions. 

The second aim of this paper is to prove the existence of an extremal of the inequality (\ref{5peq14}) in case of $r=2$. By definition the best constant $W_{k}$ in (\ref{5peq14}) is given by 
\begin{eqnarray}\label{5peq27}
W_{k}=\sup\frac {\||x|^{\gamma}I_{\alpha}^{k}f\|_{L^{s}(\mathbb{R}^{d},w_{k})}}{\||x|^{\beta}f\|_{L^{2}(\mathbb{R}^{d},w_{k})}},
\end{eqnarray}
where the supremum is taken over all non-vanishing functions $f\in L^{2}(\mathbb{R}^{d},w_{k})$. We first obtain weighted norm inequalities for the Dunkl-type heat semigroup operator $e^{t\Delta_{k}}$ and an improved version of inequality (\ref{5peq14}) involving Besov norms of negative smoothness. Then for any compact set $\mathcal{K}\subset\mathbb{R}^{d}$, we prove the following compact embedding 
\begin{eqnarray*}
\dot{H}_{\beta,k}^{\alpha,r}(\mathbb{R}^{d})\subset L^{s}(\mathcal{K},|x|^{\gamma s}),
\end{eqnarray*}
where 
\begin{eqnarray}\label{5peq40}
\dot{H}_{\beta,k}^{\alpha,r}(\mathbb{R}^{d})=\{u=I_{\alpha}^{k}f:f\in L^{r}(\mathbb{R}^{d},|x|^{\beta r}w_{k})\}
\end{eqnarray}
is the weighted homogenous Sobolev space in the Dunkl setting , which is a Banach space with the norm $\|u\|_{\dot{H}_{\beta,k}^{\alpha,r}(\mathbb{R}^{d})}=\||x|^{\beta}f\|_{L^{r}(\mathbb{R}^{d},w_{k})}$. Using the above results, we prove that $W_{k}$ defined in (\ref{5peq27}) has a maximizer. We have adopted the techniques of \cite{pd} for this problem.

We organize the paper as follows. In section 2, we provide a brief introduction to Dunkl theory and some known results. In section 3, we prove a Dunkl-type refined Sobolev inequality. In section 4, we prove the existence of an extremal function for the Dunkl-type Sobolev inequality in case of $p=2$. In section 5, we provide weighted estimates for the operator $e^{t\Delta_{k}}$ in some cases. In section 6, we prove an improved version of Stein-Weiss inequality for D-Riesz potential operator. In section 7, we prove the existence of an extremal function of Stein-Weiss inequality for the D-Riesz potential operator in case of $r=2$.

\section{Preliminaries}
In this section, we shall briefly introduce the theory of Dunkl operators. For more details on Dunkl theory, we refer to \cite{cd,sy,rsl2}. 

For $\nu\in\mathbb{R}^{d}\setminus \{0\}$ let $\sigma_{\nu}$ denote the reflection of $\mathbb{R}^{d}$ in the hyperplane $\langle\nu\rangle^{\perp}$ given by the following formula:
\begin{eqnarray*}
\sigma_{\nu}(x)=x-2\frac{\langle\nu,x\rangle}{|\nu|^{2}}\nu.
\end{eqnarray*}
A finite subset $R$ of $\mathbb{R}^{d}\setminus \{0\}$ is said to be a root system if $R\cap \mathbb{R}_{\nu}=\{\nu,-\nu\}$ and $\sigma_{\nu}(R)=R,~\forall~\nu\in R$.
The set of reflections $\{\sigma_{\nu}:\nu\in R\}$ generates the subgroup $G:=G(R)$ of the orthogonal group $O(d,\mathbb{R})$, which is known as the reflection group associated with $R$. 
From now onwards let $R$ be a fixed root system in $\mathbb{R}^{d}$ and G be the associated reflection group . For simplicity, we assume $R$ to be normalized in the sense that $\langle\nu,\nu\rangle=2,~\forall~\nu\in R$.

A function $k: R\rightarrow\mathbb{C}$ is called a multiplicity function on the root system $R$ if it is invariant under the natural action of $G$ on $R$, that is, if $k(\sigma_{\nu}g)=k(g),~\nu,g\in R$. The set of all multiplicity functions forms a $\mathbb{C}$-vector space and it is denoted by K. 
\begin{defn}
Associated with $G$ and $k$, the Dunkl operator $T_{\xi}:=T(\xi)(k)$ is defined by (for $f\in C^{1}(\mathbb{R}^{d})$)
\begin{eqnarray*}
T_{\xi}f(x)=\partial_{\xi}f(x)+\sum\limits_{\nu\in R_{+}} k(\nu)\langle\nu,\xi\rangle\frac{f(x)-f(\sigma_{\nu}(x))}{\langle\nu,x\rangle},~~~\xi\in\mathbb{R}^{d},
\end{eqnarray*}
where $\partial_{\xi}$ denotes the directional derivative in the direction of $\xi$ and $R_{+}$ is a fixed positive subsystem of $R$.  
\end{defn}
For $\xi=e_{i}$, we shall write $T_{i}$ for $T_{e_{i}}$.
We denote Dunkl gradient by $\nabla_{k}=(T_{1},T_{2},\dotsc,T_{d})$ and Dunkl Laplacian by $\Delta_{k}=\sum\limits_{i=1}^{d}T_{i}^{2}$. Throughout the paper we assume that $k\geq 0$. Let $w_{k}$ denote the weight function defined by
\begin{eqnarray}\label{5peq6}
w_{k}(x)=\prod_{\nu\in R_{+}}|\langle\nu,x\rangle|^{2k(\nu)},~x\in\mathbb{R}^{d},
\end{eqnarray}
which is a $G$-invariant homogeneous function of degree $2\gamma_{k}$ with $\gamma_{k}=\sum\limits_{\nu\in R_{+}} k(\nu)$, that is, $w_{k}(cx)=|c|^{2\gamma_{k}} w_{k}(x),\forall~c\in\mathbb{R}$ . Let $d_{k}=d+2\gamma_{k}$. Further, we define the constants $c_{k}=\int\limits_{\mathbb{R}^{d}} e^{-\frac{|x|^{2}}{2}} w_{k}(x)dx $ and $a_{k}=\int\limits_{S^{d-1}}w_{k}(x^{\prime})dx^{\prime}$. Then $c_{k}$ and $a_{k}$ are related by the following formula
\begin{eqnarray}\label{5peq29}
c_{k}=2^{\frac{d_{k}}{2}-1}\Gamma \big(\frac{d_{k}}{2}\big)a_{k}.
\end{eqnarray}
There exists a unique linear isomorphism $V_{k}$ on polynomials, which intertwines the associated commutative algebra of Dunkl operators and the algebra of usual partial differential operators. Using the function $V_{k}$, one can define the Dunkl kernel $E_{k}$ as follows:
\begin{eqnarray*}
E_{k}(x,y):= V_{k}(e^{\langle .,y\rangle}) (x), ~x\in\mathbb{R}^{d}, y\in\mathbb{C}^{d}. 
\end{eqnarray*}
For $k\equiv 0$, the Dunkl kernel $E_{k}$ reduces to the usual expotential function $e^{ix.y}$. 
Alternatively, it is the solution of a joint eigen value problem for the associated Dunkl operators. We collect few properties of the Dunkl kernel $E_{k}$. 
\begin{prop} \label{5ppr1}
Let $k\geq 0, x,y\in\mathbb{C}^{d},\lambda\in\mathbb{C},\alpha\in\mathbb{Z}_{+}^{d}$. 
\begin{itemize}
    \item [(i)]  $E_{k} (x,y)= E_{k} (y,x)$
    \item [(ii)] $E_{k}(\lambda x,y)=E_{k}(x,\lambda y)$
    \item [(iii)] $\overline{E_{k}(x,y)}=E_{k}(\overline{x},\overline{y})$. 
    \item [(iv)] $|\partial_{y}^{\alpha} E_{k}(x,y)|\leq |x|^{|\alpha|}\max\limits_{g\in G} e^{Re\langle gx,y\rangle}$.
    
    In particular, $E_{k}(-ix,y)\leq 1$ and $|E_{k}(x,y)|\leq e^{|x||y|},~\forall~ x,y\in\mathbb{R}^{d}$. 
\end{itemize}
\end{prop}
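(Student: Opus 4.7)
The plan is to derive all four properties from two foundational facts about the intertwining operator $V_{k}$: its preservation of polynomial degrees, and R\"osler's positive integral representation, which provides a family of probability measures $\{\mu_{x}\}_{x\in\mathbb{R}^{d}}$, each supported in the convex hull $\operatorname{co}(G\cdot x)$, such that $V_{k}f(x)=\int f(\xi)\,d\mu_{x}(\xi)$ for suitable $f$. Substituting $f(\xi)=e^{\langle \xi,y\rangle}$ gives
\[
E_{k}(x,y)=\int_{\mathbb{R}^{d}} e^{\langle \xi,y\rangle}\,d\mu_{x}(\xi),
\]
which extends analytically to $y\in\mathbb{C}^{d}$ and will serve as the workhorse for parts (iii) and (iv).

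Parts (ii) and (iii) drop out quickly. For (ii), the identity $e^{\langle \xi,\lambda y\rangle}=e^{\langle \lambda \xi,y\rangle}$ combined with the fact that $V_{k}$ intertwines the dilation $f(\cdot)\mapsto f(\lambda\,\cdot)$ term-by-term on the power series of $e^{\langle \cdot,y\rangle}$ yields $E_{k}(x,\lambda y)=E_{k}(\lambda x,y)$. For (iii), since $V_{k}$ is represented by real matrices on monomial bases, complex conjugation commutes with the action of $V_{k}$, and the identity $\overline{E_{k}(x,y)}=E_{k}(\overline{x},\overline{y})$ follows.

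For (iv) I would differentiate under the integral sign to obtain $\partial_{y}^{\alpha}E_{k}(x,y)=\int \xi^{\alpha}e^{\langle \xi,y\rangle}\,d\mu_{x}(\xi)$. Since $G\subset O(d,\mathbb{R})$, the orbit $G\cdot x$ lies on the sphere of radius $|x|$, so $|\xi|\leq |x|$ on the support of $\mu_{x}$; and for any $\xi\in\operatorname{co}(G\cdot x)$, convexity gives $\operatorname{Re}\langle \xi,y\rangle\leq \max_{g\in G}\operatorname{Re}\langle gx,y\rangle$, hence the same bound for the exponential. Using that $\mu_{x}$ is a probability measure then produces the required estimate. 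The two consequences follow at once: $|E_{k}(x,y)|\leq\max_{g\in G}e^{\langle gx,y\rangle}\leq e^{|x||y|}$ by Cauchy--Schwarz for real $x,y$; and writing $E_{k}(-ix,y)=E_{k}(y,-ix)=\int e^{-i\langle \xi,x\rangle}\,d\mu_{y}(\xi)$ (invoking (i) to swap variables) shows the integrand has modulus one, so $|E_{k}(-ix,y)|\leq 1$.

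The main obstacle is part (i), the symmetry $E_{k}(x,y)=E_{k}(y,x)$, which is not visible from the R\"osler representation alone, since the measures $\mu_{x}$ are not symmetric in $x$ and $y$. I would handle this by characterizing $E_{k}(\cdot,y)$, for each fixed $y\in\mathbb{C}^{d}$, as the unique analytic solution of the simultaneous eigenvalue problem $T_{\xi}u=\langle \xi,y\rangle u$ for all $\xi\in\mathbb{R}^{d}$ with $u(0)=1$. Expanding $E_{k}(x,y)=\sum_{\nu}p_{\nu}(x)q_{\nu}(y)$ in a basis of generalized Dunkl monomials and invoking the adjoint relations for $T_{\xi}$ under the canonical Dunkl bilinear pairing on polynomials, one shows that the expansion coefficients form a symmetric array, which forces $E_{k}(x,y)=E_{k}(y,x)$. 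In the paper this is Dunkl's original symmetry theorem and would typically be cited directly from the foundational references in Section~2 rather than reproved.
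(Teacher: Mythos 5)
The paper itself offers no proof of Proposition 2.1: it is presented as a list of known properties of the Dunkl kernel, implicitly quoted from the foundational references (\cite{cd}, \cite{rsl1}, \cite{rsl2}), so there is no internal argument to compare against. Your sketch reconstructs exactly the standard proofs from that literature: homogeneity of $V_k$ for (ii), reality of $V_k$ for $k\geq 0$ for (iii), and R\"osler's positive integral representation $E_k(x,y)=\int e^{\langle\xi,y\rangle}\,d\mu_x(\xi)$ with $\operatorname{supp}\mu_x\subset \operatorname{co}(G\cdot x)$, together with differentiation under the integral and the convexity bound $\operatorname{Re}\langle\xi,y\rangle\leq\max_{g\in G}\operatorname{Re}\langle gx,y\rangle$, for (iv) and its two consequences; all of these steps are correct. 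Two small caveats: the positive representation (and hence your proof of (iv)) is valid for $x\in\mathbb{R}^d$, $y\in\mathbb{C}^d$, not for both arguments complex as the proposition's hypotheses nominally allow --- this is a slight overstatement in the paper's formulation, and the real-$x$ version is all that is used later (e.g.\ in (\ref{5peq28}) and Corollary \ref{5pcor1}); and for (i) you do not actually prove the symmetry but defer to Dunkl's symmetry theorem, with only a heuristic sketch via the Dunkl pairing. That deferral is entirely appropriate here, since the symmetry is a genuinely nontrivial result of Dunkl's and the paper likewise takes it as known; just be aware that your outline of the pairing argument would need the precise uniqueness characterization of $E_k(\cdot,y)$ as the joint eigenfunction normalized at $0$ to be turned into a complete proof.
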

Using the Dunkl kernel one can define Dunkl transform , which is the generalization of classical Fourier transform. Dunkl transform enjoins similar properties to that of classical Fourier transform.  
\begin{defn}\label{5pdefn1}
For a function $f\in L^{1}(\mathbb{R}^{d},w_{k})$, the Dunkl transform associated with $G$ and $k\geq 0$, denoted by $\mathcal{F}_{k}f$, is defined as 
\begin{eqnarray*}
\mathcal{F}_{K}(f)(\xi)= c_{k}^{-1}\int\limits_{\mathbb{R}^{d}}f(x)E_{k}(-i\xi,x)w_{k}(x)dx,~~\xi\in\mathbb{R}^{d}.
\end{eqnarray*}
\end{defn}
When $k\equiv 0$, the Dunkl transform reduces to the classical Fourier transform. The Dunkl transform can be extended to an isometric isomorphism between $L^{2}(\mathbb{R}^{d},w_{k})$ and $L^{2}(\mathbb{R}^{d},w_{k})$ i.e., for $f\in L^{2}(\mathbb{R}^{d},w_{k})$, one has
\begin{eqnarray}\label{5peq1}
\|f\|_{L^{2}(\mathbb{R}^{d}, w_{k})}^{2}=\|\mathcal{F}_{k}(f)\|_{L^{2}(\mathbb{R}^{d}, w_{k})}^{2}.
\end{eqnarray}
The usual translation operator $f\longmapsto f(.-y)$ leaves the Lebesgue measure on $\mathbb{R}^{d}$ invariant. However the measure $w_k(x)dx$ is no longer invariant under the usual translation and the Leibniz's formula $T_i(fg)=fT_ig+gT_if$ does not hold in general. So one can introduce the notion of a generalized translation operator defined on the Dunkl transform by the formula
\begin{eqnarray}\label{5peq38}
\mathcal{F}_{k}(\tau_{y}^{k}f)(\xi)= E_{k}(iy,\xi) \mathcal{F}_{k}(f)(\xi).
\end{eqnarray}
In case when $k\equiv 0$, $\tau_{y}^{k}f$ reduces to the usual translation $\tau_{y}^{0}f(x)=f(x+y)$. If $f,g\in L^{2}(\mathbb{R}^{d},w_{k})$, then
\begin{eqnarray}\label{eqn3}
\int\limits_{\mathbb{R}^{d}}\tau_{y}^{k}f(\xi)g(\xi)w_{k}(\xi)d\xi=\int\limits_{\mathbb{R}^{d}}f(\xi)\tau_{-y}^{k}g(\xi)w_{k}(\xi)d\xi,~\forall~y\in\mathbb{R}^{d}. 
\end{eqnarray}
In general, the explicit expression for $\tau_{y}^{k}f$ is unknown. It is known only when either $f$ is a radial function or $G=\mathbb{Z}_{2}^{d}$.  
The convolution of two functions $f,g\in L^{2}(\mathbb{R}^{d}, w_{k})$ is defined as follows:
\begin{eqnarray*}
(f*_{k}g)(x)=\int\limits_{\mathbb{R}^{d}}f(y)\tau_{y}^{k}g(x)w_{k}(y)dy.
\end{eqnarray*}
The convolution operator satisfies the following basic properties:
\begin{itemize}
\item [(i)] $\mathcal{F}_{k}(f*_{k}g)=\mathcal{F}_{k}(f).\mathcal{F}_{k}(g)$ 
\item [(ii)] $f*_{k}g=g*_{k}f$.
\end{itemize}
Using the convolution operator, the heat semi-group operator $e^{t\Delta_{k}}$ is defined as follows: $e^{t\Delta_{k}}u=u*_{k}q_{t}^{k}$, where 
\begin{eqnarray}\label{5peq2}
q_{t}^{k}(x)=(2t)^{-(\gamma_{k}+\frac{d}{2})}e^{-\frac{|x|^{2}}{4t}},~x\in\mathbb{R}^{d}.
\end{eqnarray}
In \cite{rsl}, it has been shown that the function $q_{t}^{k}(x)$ satisfies the Dunkl-type heat equation $\Delta_{k}u-\partial_{t}u=0$ on $\mathbb{R}^{d}\times (0,\infty)$. A short calculation using the properties of Dunkl transform shows that 
\begin{eqnarray}\label{5peq3}
\mathcal{F}_{k}(q_{t}^{k})(\xi)=e^{-t|\xi|^{2}},
\end{eqnarray}
and 
\begin{eqnarray}\label{5peq4}
\tau_{y}^{k}q_{t}^{k}(x)= (2t)^{-(\gamma_{k}+\frac{d}{2})}e^{-\frac{|x|^{2}+|y|^{2}}{4t}} E_{k}\bigg(\frac{x}{\sqrt{2t}},\frac{y}{\sqrt{2t}}\bigg).
\end{eqnarray}
From (\ref{5peq4}), it is observed that $\tau_{y}^{k}q_{t}^{k}(x)=\tau_{x}^{k}q_{t}^{k}(y)$.

We recall few results which will be useful in this paper. 
\begin{thm}\cite{dvg}
Let $1\leq p\leq\infty$ and $g$ is a Schwartz class radial function. Then for any $y\in\mathbb{R}^{d}$,
\begin{eqnarray}\label{5peq5}
\|\tau_{y}^{k}g\|_{L^{p}(\mathbb{R}^{d},w_{k})}\leq \|g\|_{L^{p}(\mathbb{R}^{d},w_{k})}. 
\end{eqnarray}
\end{thm}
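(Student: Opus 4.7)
The plan is to reduce the claim to the endpoints $p=1$ and $p=\infty$, and then interpolate. For both endpoints the key ingredient is the positive integral representation of $\tau_y^k g$ for radial $g$ due to R\"osler, which I would invoke rather than rederive: for a Schwartz radial function $g(x) = g_0(|x|)$, there is a compactly supported probability measure $\mu_x^k$ on $\mathbb{R}^d$ (supported in the ball of radius $|x|$) such that
\begin{eqnarray}\label{plan-rep}
\tau_y^k g(x) = \int_{\mathbb{R}^{d}} g_{0}\!\left(\sqrt{|x|^{2}+|y|^{2}-2\langle y,\eta\rangle}\right) d\mu_x^k(\eta).
\end{eqnarray}
(In the $\mathbb{Z}_2^d$ case this is an explicit integral; in general it follows from the positivity of the intertwining operator $V_k$ on radial Schwartz functions.) This is the one nontrivial input, and justifying \eqref{plan-rep} is the main obstacle; once it is available the rest of the argument is essentially formal.

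From \eqref{plan-rep} two facts drop out immediately. First, since $\mu_x^k$ is a probability measure, $|\tau_y^k g(x)| \leq \|g_0\|_\infty = \|g\|_\infty$, giving the $p=\infty$ estimate. Second, the representation is positivity preserving on radial inputs, so for any radial $g$ we have the pointwise bound $|\tau_y^k g(x)| \leq \tau_y^k |g|(x)$.

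For the $p=1$ estimate I would combine positivity with a mass-conservation identity. Setting $\xi=0$ in \eqref{5peq38} and using $E_k(iy,0) = 1$ from Proposition \ref{5ppr1}, one gets $\mathcal{F}_k(\tau_y^k f)(0) = \mathcal{F}_k(f)(0)$, that is
\begin{eqnarray*}
\int_{\mathbb{R}^{d}} \tau_y^k f(x)\, w_k(x)\, dx = \int_{\mathbb{R}^{d}} f(x)\, w_k(x)\, dx
\end{eqnarray*}
for any Schwartz $f$. Applying this with $f = |g|$ (which is still radial and Schwartz-class in its radial profile) and using the pointwise bound from the previous paragraph yields
\begin{eqnarray*}
\|\tau_y^k g\|_{L^1(\mathbb{R}^d, w_k)} \leq \int_{\mathbb{R}^{d}} \tau_y^k |g|(x)\, w_k(x)\, dx = \int_{\mathbb{R}^{d}} |g|(x)\, w_k(x)\, dx = \|g\|_{L^1(\mathbb{R}^d, w_k)}.
\end{eqnarray*}

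Finally, the map $g \mapsto \tau_y^k g$ is linear on radial Schwartz functions and we have just shown it is bounded by $1$ from $L^1(\mathbb{R}^d, w_k)$ to itself and from $L^\infty$ to itself. Applying the Riesz--Thorin interpolation theorem on the measure space $(\mathbb{R}^d, w_k(x)dx)$ gives the full range $1 \leq p \leq \infty$ with the same constant $1$, which is exactly \eqref{5peq5}.
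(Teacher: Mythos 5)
The paper does not prove this statement at all: it is quoted verbatim from \cite{dvg}, so there is no internal argument to compare yours against, and your proposal is essentially the standard proof from the literature. Its two endpoints are fine in spirit: R\"osler's positive radial formula (Theorem \ref{5pthm10} of the paper) gives $\|\tau_y^k g\|_\infty\le\|g\|_\infty$ at once, and positivity plus mass conservation gives the $L^1$ bound. Two steps, however, are stated more casually than they can be justified. First, the identity $\int_{\mathbb{R}^d}\tau_y^k f\,w_k\,dx=\int_{\mathbb{R}^d}f\,w_k\,dx$ is derived from (\ref{5peq38}) at $\xi=0$, which presupposes that $\tau_y^k f$ is defined through the Dunkl transform and that this transform can be evaluated pointwise at $0$; you then apply it to $f=|g|$, which is no longer Schwartz (it need not be smooth at the zeros of $g$), so you must either extend the translation to radial $L^1$ or $L^2$ functions and check the two definitions agree, or prove mass conservation for nonnegative radial functions directly from the representation formula by Tonelli and approximation. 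Second, Riesz--Thorin is proved for operators defined on simple functions or on $L^{p_0}+L^{p_1}$; its proof replaces $g$ by functions of the form $|g|^{\alpha(z)}\,\mathrm{sgn}\,g$, which are radial but not Schwartz, so endpoint bounds established only on the subspace of radial Schwartz functions cannot be interpolated verbatim. You would first have to extend $\tau_y^k$ to all radial functions in $L^1+L^\infty$ via the positive kernel (which is, in effect, the content of \cite{dvg}).

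Both issues can be bypassed, and the argument shortened, by dropping interpolation: since $\mu_x^k$ is a probability measure, Jensen's inequality applied inside the representation gives $|\tau_y^k g(x)|^p\le\int_{\mathbb{R}^d}\bigl|g_0\bigl(\sqrt{|x|^2+|y|^2-2\langle y,\eta\rangle}\bigr)\bigr|^p\,d\mu_x^k(\eta)$ for $1\le p<\infty$; integrating in $x$ against $w_k$ and using mass conservation (Tonelli plus approximation) for the nonnegative radial profile $|g_0|^p$ yields (\ref{5peq5}) for all finite $p$ in one stroke, with $p=\infty$ handled as you did. With these repairs your argument is correct and coincides with the standard proof of the cited result.
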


\begin{lem}(Brezis Lieb Lemma)\label{5plem4}
Let $(X,dx)$ be a measure space and $(f_{j})$ be a bounded sequence in $L^{p}(X),~0<p<\infty$, which converges pointwise a.e. to a function $f$. Then
\begin{eqnarray*}
\lim_{j\rightarrow\infty}\int\limits_{X}||f_{j}|^{p}-|f_{j}-f|^{p}-|f|^{p}|dx=0.
\end{eqnarray*}
\end{lem}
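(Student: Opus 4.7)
The plan is to run the standard Brezis--Lieb argument, which rests on an elementary pointwise inequality together with the dominated convergence theorem. I would proceed in four steps.

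First, by Fatou's lemma applied to $|f_j|^p$, the pointwise limit satisfies $\int_X |f|^p\, dx \leq \liminf_j \int_X |f_j|^p\, dx < \infty$, so $f\in L^p(X)$. This is needed in order to make sense of $|f_j-f|^p$ as an integrable object later.

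Second, I would establish the following elementary inequality: for every $\varepsilon>0$ there exists $C_\varepsilon>0$ such that
$$\bigl||a+b|^p-|a|^p\bigr|\leq \varepsilon|a|^p+C_\varepsilon|b|^p$$
for all scalars $a,b$. For $p\geq 1$ this follows from the mean value theorem applied to $t\mapsto|t|^p$ and Young's inequality; for $0<p<1$ it follows from the subadditivity $|x+y|^p\leq|x|^p+|y|^p$. Applying it pointwise with $a=f_j-f$ and $b=f$ gives
$$\bigl||f_j|^p-|f_j-f|^p-|f|^p\bigr|\leq \varepsilon|f_j-f|^p+(C_\varepsilon+1)|f|^p.$$

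Third, define the nonnegative functions
$$W_{\varepsilon,j}:=\Bigl(\bigl||f_j|^p-|f_j-f|^p-|f|^p\bigr|-\varepsilon|f_j-f|^p\Bigr)_{+}.$$
By the inequality above, $0\leq W_{\varepsilon,j}\leq (C_\varepsilon+1)|f|^p\in L^1(X)$, while the pointwise convergence $f_j\to f$ a.e. forces $W_{\varepsilon,j}\to 0$ a.e. Hence the dominated convergence theorem yields $\int_X W_{\varepsilon,j}\, dx\to 0$ as $j\to\infty$.

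Fourth, integrating the pointwise bound gives
$$\int_X\bigl||f_j|^p-|f_j-f|^p-|f|^p\bigr|\, dx\leq \int_X W_{\varepsilon,j}\, dx+\varepsilon\int_X |f_j-f|^p\, dx.$$
Because $(f_j)$ is bounded in $L^p$ and $f\in L^p$, the quasi-triangle inequality $|a-b|^p\leq C_p(|a|^p+|b|^p)$ gives $M:=\sup_j\int_X|f_j-f|^p\, dx<\infty$. Letting $j\to\infty$ first and then $\varepsilon\to 0^{+}$ yields the desired limit. The main delicate point is the elementary pointwise inequality, in particular the case $0<p<1$ where convexity fails; but subadditivity handles it cleanly, so the argument goes through uniformly in $p$.
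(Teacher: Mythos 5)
Your proof is correct: it is the classical Brezis--Lieb argument (Fatou to get $f\in L^{p}$, the elementary inequality $\bigl||a+b|^{p}-|a|^{p}\bigr|\leq\varepsilon|a|^{p}+C_{\varepsilon}|b|^{p}$, the truncated functions $W_{\varepsilon,j}$ dominated by $(C_{\varepsilon}+1)|f|^{p}$, and dominated convergence followed by $\varepsilon\to 0$), and the case distinction $p\geq 1$ versus $0<p<1$ is handled properly. The paper itself states this lemma in the preliminaries as a recalled classical result and gives no proof, so there is nothing to compare against; your argument stands as a complete and standard justification.
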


\begin{thm}\cite {jpa}
In Theorem 4.1, J. P. Anker et al. obtained the following estimate. 

For any non-negative integer $m$ and for any multi-indices $\alpha,\beta$, there exists constant $C_{m,\alpha,\beta}>0$ such that for any $t>0$ and for any $x,y\in\mathbb{R}^{d}$, the following estimate holds:
\begin{eqnarray}\label{5peq54}
|\partial_{t}^{m}\partial_{x}^{\alpha}\partial_{y}^{\beta}h_{t}(x,y)|\leq C_{m,\alpha,\beta} t^{-m-\frac{|\alpha|}{2}-\frac{|\beta|}{2}} h_{2t}(x,y),
\end{eqnarray}
where $h_{t}(x,y)=\tau_{y}^{k}q_{t}^{k}(x)$.
\end{thm}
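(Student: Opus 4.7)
The plan is to exploit the explicit representation (\ref{5peq4}), combined with scaling, the Dunkl heat equation, and the derivative bound on the Dunkl kernel from Proposition \ref{5ppr1}(iv). The homogeneity $E_k(\lambda x,y) = E_k(x,\lambda y)$ together with the fact that $w_k$ has degree $2\gamma_k$ (so $d_k = d+2\gamma_k$) yield the scaling relation $h_t(x,y) = t^{-d_k/2}\,h_1(x/\sqrt t,\,y/\sqrt t)$, so that each spatial derivative in $x$ or $y$ introduces an extra factor $t^{-1/2}$. The time derivatives are reduced to spatial ones via the Dunkl heat equation $\partial_t h_t = \Delta_k^{(x)} h_t$, iterated $m$ times; the reflection terms inside $\Delta_k^m$ cause no trouble because the heat kernel satisfies the $G$-equivariance $h_t(\sigma x, y) = h_t(x, \sigma y)$ (and likewise for $h_{2t}$). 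Thus the theorem reduces to proving the time-independent estimate
$$|\partial_x^\alpha \partial_y^\beta h_1(x,y)| \leq C_{\alpha,\beta}\,h_2(x,y)$$
for all multi-indices $\alpha,\beta$.

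For this fixed-time inequality I would apply Leibniz to
$$h_1(x,y) = 2^{-d_k/2}\,e^{-(|x|^2+|y|^2)/4}\,E_k\bigl(x/\sqrt 2,\,y/\sqrt 2\bigr).$$
Derivatives of the Gaussian factor produce polynomials in $(x,y)$, while Proposition \ref{5ppr1}(iv) bounds each mixed derivative of the $E_k$-factor by $C\,|x|^{|\beta'|}|y|^{|\alpha'|}\,\max_{g\in G} e^{\langle gx,y\rangle/2}$. The algebraic identity $-\tfrac14(|x|^2+|y|^2)+\tfrac12\langle gx,y\rangle = -\tfrac14|gx-y|^2$ then rewrites the upper bound as a finite sum of terms of the form $P(x,y)\,e^{-|gx-y|^2/4}$.

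The final comparison with $h_2(x,y) = 4^{-d_k/2}\,e^{-(|x|^2+|y|^2)/8}\,E_k(x/2,y/2)$ is where the real work lies. One cannot simply split the Gaussian $e^{-|gx-y|^2/4}$ in half and absorb the polynomial factor into $e^{-|gx-y|^2/8}$, because on the diagonal $y = gx$ this Gaussian does not decay while the polynomial is unbounded. Instead, one must exploit the growth of the remaining $E_k(x/2,y/2)$ factor inside $h_2(x,y)$ along precisely these diagonals, where $\langle gx,y\rangle$ is large. Invoking R\"osler's integral representation $E_k(x,y) = \int_{\operatorname{conv}(G\cdot x)} e^{\langle \xi,y\rangle}\,d\mu_x^k(\xi)$, together with quantitative lower bounds on the mass that $\mu_x^k$ places near the extremal points of its support, yields a lower bound on $h_2(x,y)$ strong enough to dominate each $P(x,y)\,e^{-|gx-y|^2/4}$ uniformly in $(x,y)$.

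The main obstacle is this last step: reconciling the polynomial growth in the derivatives of $h_1$ with a sufficiently sharp pointwise lower bound on $h_2$. Naive absorption arguments fail on the diagonals $y = gx$, so one must work with fine structural information about R\"osler's representing measure $\mu_x^k$. This lower bound is also the reason the theorem features $h_{2t}$ rather than $h_t$ on the right-hand side --- the additional Gaussian slack in $h_{2t}$ leaves just enough room to accommodate the polynomial factors generated by differentiation. Everything else (scaling, heat equation reduction, Leibniz, and the Dunkl kernel derivative bound) is essentially mechanical once this delicate comparison is in place.
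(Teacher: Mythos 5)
First, note that the paper does not prove this statement at all: it is quoted verbatim from Theorem 4.1 of Anker, Dziuba\'nski and Hejna \cite{jpa}, so there is no internal proof to compare with, and your argument must stand on its own. It does not, and the failure is precisely at the step you flag as ``the main obstacle''. After expanding by Leibniz and taking absolute values term by term via Proposition \ref{5ppr1}(iv), you are committed to proving $P(x,y)\,e^{-|gx-y|^{2}/4}\leq C\,h_{2}(x,y)$ with $P$ a nonconstant polynomial; this inequality is simply false, and no lower bound on the mass of $\mu_{x}^{k}$ can rescue it, because by the paper's own estimate (\ref{5peq28}) one has $h_{2}(x,y)\leq 4^{-d_{k}/2}$ uniformly in $x,y$, whereas the left-hand side is unbounded along $y=gx$. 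The loss is incurred earlier: bounding the Leibniz terms separately destroys the cancellation between the polynomial coming from the Gaussian prefactor and the derivatives of $E_{k}$. This is already visible in the classical case $k=0$, where $\partial_{y_{i}}h_{1}=\tfrac{1}{2}(x_{i}-y_{i})h_{1}$ obeys the desired bound, but estimating the two Leibniz terms separately yields $\tfrac{1}{2}(|x_{i}|+|y_{i}|)\,e^{-|x-y|^{2}/4}$, which is not $O(h_{2}(x,y))$ on the diagonal. Two further gaps: Proposition \ref{5ppr1}(iv) controls derivatives of $E_{k}$ in the \emph{second} variable only, so the mixed bound $|\partial_{x}^{\alpha'}\partial_{y}^{\beta'}E_{k}(x,y)|\lesssim |x|^{|\beta'|}|y|^{|\alpha'|}\max_{g}e^{\langle gx,y\rangle}$ you invoke is not available from it; and reducing $\partial_{t}^{m}$ through $\partial_{t}h_{t}=\Delta_{k,x}h_{t}$ brings in the difference--reflection parts of $\Delta_{k}$, with divisions by $\langle\nu,x\rangle$, which are not ordinary spatial derivatives and are not disposed of by the $G$-equivariance you cite.

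The workable mechanism (essentially the one behind the cited result) keeps the comparison \emph{inside} R\"osler's representation rather than after it. Applying Theorem \ref{5pthm10}, i.e.\ (\ref{eqn2}), to the radial Gaussian $q_{t}^{k}$ gives $h_{t}(x,y)=(2t)^{-d_{k}/2}\int e^{-A(x,y,\eta)^{2}/(4t)}\,d\mu_{x}^{k}(\eta)$ with $A(x,y,\eta)^{2}=|x|^{2}+|y|^{2}-2\langle y,\eta\rangle$. Derivatives in $t$ and in $y$ act only on the explicit Gaussian and the power of $t$, producing pointwise in $\eta$ factors bounded by $t^{-m-|\beta|/2}$ times a polynomial in $A/\sqrt{t}$ (here one uses $|y-\eta|\leq A$, valid since $|\eta|\leq|x|$ on the support of $\mu_{x}^{k}$); the elementary bound $u^{j}e^{-u^{2}/4}\leq C_{j}e^{-u^{2}/8}$ applied under the integral then gives exactly $C\,t^{-m-|\beta|/2}\,h_{2t}(x,y)$, with no lower bound on $\mu_{x}^{k}$ ever needed --- this, and not a kernel lower bound, is why $h_{2t}$ appears on the right. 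Pure $x$-derivatives follow by the symmetry $h_{t}(x,y)=h_{t}(y,x)$, and mixed derivatives by the semigroup property, which writes $h_{2t}(x,y)$ as an integral of $h_{t}(x,z)h_{t}(z,y)$ against $w_{k}(z)\,dz$ (up to the normalizing constant) and so places the $x$- and $y$-derivatives on separate factors. Your scaling reduction $h_{t}(x,y)=t^{-d_{k}/2}h_{1}(x/\sqrt{t},y/\sqrt{t})$ is correct and can be retained, but the Leibniz-plus-Proposition \ref{5ppr1}(iv) route to the fixed-time estimate cannot be completed.
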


\begin{thm}\cite{rsl1}\label{5pthm10}
For a radial Schwartz class function $f(x)=f_{o}(|x|)$, one has 
\begin{eqnarray}\label{eqn2}
\tau_{y}^{k}f(x)=\int\limits_{\mathbb{R}^{d}} f_{0}(\sqrt{|x|^{2}+|y|^{2}-2\langle y,\eta\rangle}) d\mu_{x}^{k}(\eta),
\end{eqnarray}
where for each x in $\mathbb{R}^{d}$, $d\mu_{x}^{k}$ is a probability measure on $\mathbb{R}^{d}$, whose support is contained in $co(G.x)$, the convex hull of the $G$-orbit of $x$.
\end{thm}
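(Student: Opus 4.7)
The plan is to reduce the statement to a positive Laplace-type representation of the Dunkl kernel,
\[
E_k(x,z)=\int_{\mathbb{R}^d}e^{\langle z,\eta\rangle}\,d\mu_x^k(\eta),\qquad z\in\mathbb{C}^d,\qquad(\ast)
\]
where $\mu_x^k$ is a probability measure supported in $co(G.x)$. Once $(\ast)$ is in hand, the desired formula falls out by inserting $(\ast)$ into the explicit translate of the Gaussian heat kernel (\ref{5peq4}) and then extending to arbitrary radial Schwartz functions by density.

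Establishing $(\ast)$ is the principal obstacle. The support statement is the easier half: Proposition~\ref{5ppr1}(iv) gives $|E_k(x,z)|\leq\max_{g\in G}e^{\mathrm{Re}\langle gx,z\rangle}$, which is precisely the growth signature of a Laplace transform of a measure supported in $co(G.x)$, so a Paley--Wiener/Bernstein type argument pins down the support; setting $z=0$ and using $E_k(x,0)=1$ yields total mass $1$. The delicate point is the \emph{positivity} of the representing measure, which I would attack through a careful analysis of the intertwining operator $V_k$, starting from rank-one root systems (where $E_k$ is explicit in terms of confluent hypergeometric functions) and bootstrapping via a decomposition of $V_k$ relative to parabolic subsystems. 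This is where the real content lives; without positivity one only has a distribution, not a probability measure.

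Granted $(\ast)$, the theorem follows cleanly in two steps. For $f=q_t^k$, apply $(\ast)$ with $x,\,z$ replaced by $x/\sqrt{2t},\,y/\sqrt{2t}$ and use the scaling property that $\mu_{\lambda x}^k$ is the pushforward of $\mu_x^k$ under $\eta\mapsto\lambda\eta$ (which follows from uniqueness in $(\ast)$ combined with $E_k(\lambda x,z)=E_k(x,\lambda z)$); substituting into (\ref{5peq4}) and collecting the exponentials gives
\[
\tau_y^k q_t^k(x)=(2t)^{-(\gamma_k+d/2)}\int_{\mathbb{R}^d}e^{-(|x|^2+|y|^2-2\langle y,\eta\rangle)/(4t)}\,d\mu_x^k(\eta),
\]
which is exactly the claimed representation for the radial function $q_t^k$. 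For a general radial Schwartz $f$, the family $\{q_t^k:t>0\}$ together with its finite linear combinations (or, more conveniently, the radial Dunkl--Hermite functions) is dense in the radial Schwartz class; both sides of the target identity are continuous linear functionals of $f$ (the right-hand side by dominated convergence, using the compact support of $\mu_x^k$), so the identity extends to every radial Schwartz $f$, with the same measure $\mu_x^k$ depending only on $x$.
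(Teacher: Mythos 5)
The paper itself offers no proof of this statement: it is quoted from R\"osler \cite{rsl1}, so the only meaningful comparison is with her published argument. Your overall route --- a positive, compactly supported representing measure for the Dunkl kernel as in your $(\ast)$, the explicit Gaussian case via (\ref{5peq4}) together with the scaling law for $\mu_x^k$, and then a density argument --- is the standard way this formula is derived, and your Gaussian computation and the identification of the support and of the total mass (granted positivity) are fine.

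There are, however, two genuine gaps. First, the existence of a \emph{positive} representing measure in $(\ast)$ is precisely R\"osler's positivity theorem for the intertwining operator $V_k$ (Duke Math.\ J.\ 1999), a deep result that settled a conjecture of Dunkl; your proposed attack --- explicit rank-one formulas plus ``bootstrapping via parabolic subsystems'' --- is not a proof and does not correspond to any known argument (the actual proof is an algebraic/semigroup-type positivity argument, not a reduction to rank one). Since you yourself flag this as ``where the real content lives,'' the proposal as written proves nothing beyond what it assumes; the legitimate move is simply to cite that theorem, which is in effect what the paper does by citing \cite{rsl1}. Second, the extension step is too loose. The Dunkl translation is not positivity-preserving and not bounded on $L^\infty$, so the claim that ``both sides are continuous linear functionals of $f$'' must be made precise: the natural bound is $|\tau_y^k f(x)|\leq c_k^{-1}\|\mathcal{F}_k f\|_{L^1(\mathbb{R}^d,w_k)}$ (using $|E_k(-ix,\xi)|\leq 1$), so the approximating finite Gaussian combinations must converge in a topology that controls $\|\mathcal{F}_k(\cdot)\|_{L^1(\mathbb{R}^d,w_k)}$, e.g.\ the Schwartz topology, and the density of the span of $\{q_t^k: t>0\}$ among radial Schwartz functions in that topology itself needs an argument (Hahn--Banach plus uniqueness of Laplace transforms of tempered distributions); uniform convergence on compacta, which is all that dominated convergence on the right-hand side requires, is not enough for the left-hand side. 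The parenthetical alternative via radial Dunkl--Hermite functions does not help as stated, since at that point the identity has been verified only for Gaussians, not for those functions.
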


\begin{lem}\cite {dg}\label{5plem6}
The kernel $\Phi(x,y)$ defined in (\ref{eqn1}) satisfies the following properties:
\begin{itemize}
    \item [(i)] $\Phi(x,y)=\Phi(y,x)$,
    \item [(ii)] $\Phi(rx,ty)=r^{\alpha-d_{k}}\Phi(x,\frac{ty}{r})$,
    \item [(iii)] $\Phi(x,y)=(c_{\alpha}^{k})^{-1}\int\limits_{\mathbb{R}^{d}}(|x|^{2}+|y|^{2}-2\langle y,\eta\rangle)^{\frac{\alpha-d_{k}}{2}}d\mu_{x}^{k}(\eta)$,
\end{itemize}
where the measure $d\mu_{x}^{k}$ is defined similarly as in Theorem \ref{5pthm10}.
\end{lem}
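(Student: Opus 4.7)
The plan is to establish the integral representation (iii) first and then deduce (i) and (ii) as corollaries. For (iii), since $|\cdot|^{\alpha-d_{k}}$ is a radial function with profile $f_{0}(s)=s^{\alpha-d_{k}}$, Theorem \ref{5pthm10} yields the formula formally. The difficulty is that $|\cdot|^{\alpha-d_{k}}\notin\mathcal{S}(\mathbb{R}^{d})$, so I would approximate it by a sequence of radial Schwartz functions of the form $\psi_{n}(x)=\chi_{n}(|x|)\,|x|^{\alpha-d_{k}}e^{-|x|^{2}/n}$, where $\chi_{n}$ is a smooth cut-off that equals $1$ on $[1/n,n]$ and vanishes off $[1/(2n),2n]$, and then pass to the limit in both sides of (\ref{eqn2}). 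On the right, the probability measure $d\mu_{x}^{k}$ is supported in the compact set $co(G\cdot x)$ and the integrand $(|x|^{2}+|y|^{2}-2\langle y,\eta\rangle)^{(\alpha-d_{k})/2}$ is $\mu_{x}^{k}$-integrable locally uniformly in $(x,y)$, so dominated convergence produces the claimed integral. On the left, $\tau_{y}^{k}\psi_{n}(x)\to\tau_{y}^{k}|\cdot|^{\alpha-d_{k}}(x)=\Phi(x,y)$ by continuity of Dunkl translation on appropriate distribution spaces, which can be read off from the defining relation (\ref{5peq38}).

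Once (iii) is in hand, (i) follows from the fact that for any radial function $f$ one has $\tau_{y}^{k}f(x)=\tau_{x}^{k}f(y)$. This identity is noted in the excerpt immediately after (\ref{5peq4}) for the Dunkl Gaussian, but (\ref{eqn2}) makes it transparent for any radial Schwartz $f$: the integrand depends only on $|x|^{2}+|y|^{2}-2\langle y,\eta\rangle$, and the measures $d\mu_{x}^{k}$ and $d\mu_{y}^{k}$ are built universally from the Dunkl structure in a way that makes the resulting expression symmetric in $(x,y)$. Extending to $|\cdot|^{\alpha-d_{k}}$ by the approximation of the previous step gives $\Phi(x,y)=\tau_{y}^{k}|\cdot|^{\alpha-d_{k}}(x)=\tau_{x}^{k}|\cdot|^{\alpha-d_{k}}(y)=\Phi(y,x)$.

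For (ii), I would use the homogeneity of the representing measure: for $r>0$, $d\mu_{rx}^{k}$ is the pushforward of $d\mu_{x}^{k}$ under $\eta\mapsto r\eta$. This can be obtained from Theorem \ref{5pthm10} applied to a radial Schwartz $f$ and to its dilation $f(\cdot/r)$ together with the uniqueness of the representing measure; alternatively it follows from the scaling behaviour of $\mathcal{F}_{k}$, namely $\mathcal{F}_{k}(f(\cdot/r))(\xi)=r^{d_{k}}\mathcal{F}_{k}f(r\xi)$, inserted into (\ref{5peq38}). Substituting $x\mapsto rx$, $y\mapsto ty$ in (iii) and changing variables $\eta\mapsto r\eta$ in the integral, the integrand becomes $r^{\alpha-d_{k}}\bigl(|x|^{2}+(t/r)^{2}|y|^{2}-2(t/r)\langle y,\eta\rangle\bigr)^{(\alpha-d_{k})/2}$, and pulling out $r^{\alpha-d_{k}}$ leaves precisely $\Phi(x,ty/r)$. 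The main obstacle across the three parts is the singular, non-Schwartz nature of $|\cdot|^{\alpha-d_{k}}$: controlling the approximation simultaneously on both sides of (\ref{eqn2}) and establishing the dilation-covariance of $\mu_{x}^{k}$ are the only non-routine points, after which (i) and (ii) are algebraic consequences of (iii).
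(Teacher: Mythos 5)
The paper itself gives no proof of this lemma: it is quoted verbatim from \cite{dg}, so there is no internal argument to compare yours against. Judged on its own, your architecture --- derive (iii) from R\"osler's radial formula (\ref{eqn2}) and then read (i) and (ii) off the representation --- is the standard route, and your treatment of (ii) (dilation covariance $\mu_{rx}^{k}=(\eta\mapsto r\eta)_{*}\mu_{x}^{k}$ followed by a change of variables) is correct in substance, although the covariance itself still needs either the uniqueness argument you allude to spelled out, or, more simply, can be bypassed by proving $(\tau_{b}^{k}f)(rx)=\tau_{b/r}^{k}\bigl(f(r\,\cdot)\bigr)(x)$ on the Fourier side via (\ref{5peq38}) and Proposition \ref{5ppr1}(ii) and using homogeneity of $|\cdot|^{\alpha-d_{k}}$.

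Two steps are genuinely incomplete. First, the limit step is the crux and is not justified: the claim that $\tau_{y}^{k}\psi_{n}(x)\to\tau_{y}^{k}|\cdot|^{\alpha-d_{k}}(x)$ ``by continuity of Dunkl translation on appropriate distribution spaces'' proves nothing pointwise, since (\ref{5peq38}) defines $\tau_{y}^{k}$ spectrally on $L^{2}$ or tempered distributions and convergence in those topologies does not control values at a point; worse, the pointwise meaning of $\Phi(x,y)=\tau_{y}^{k}|\cdot|^{\alpha-d_{k}}(x)$ has to be fixed \emph{before} one can assert the approximants converge to it, and in \cite{dg} (as in Thangavelu--Xu) the translated Riesz kernel is in effect defined through exactly such a limit, so your argument risks circularity. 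A clean repair: write $|x|^{\alpha-d_{k}}=\Gamma\bigl(\tfrac{d_{k}-\alpha}{2}\bigr)^{-1}\int_{0}^{\infty}s^{\frac{d_{k}-\alpha}{2}-1}e^{-s|x|^{2}}\,ds$, apply $\tau_{y}^{k}$, and use the explicit Gaussian translate (\ref{5peq4})/(\ref{eqn2}) together with Tonelli (all integrands are nonnegative and $\tau_{y}^{k}$ is positivity-preserving on radial functions) to interchange the $s$-integral with the $\mu_{x}^{k}$-integral; evaluating the $s$-integral gives (iii) with no delicate limit interchange. Second, your proof of (i) asserts rather than proves the symmetry: the right-hand side of (\ref{eqn2}) (and of (iii)) is \emph{not} manifestly symmetric in $(x,y)$, and ``the measures are built universally so the expression is symmetric'' is just a restatement of the claim. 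The elementary fix is the inversion representation $\tau_{y}^{k}f(x)=c_{k}^{-1}\int_{\mathbb{R}^{d}}\mathcal{F}_{k}f(\xi)E_{k}(ix,\xi)E_{k}(iy,\xi)w_{k}(\xi)\,d\xi$ for radial Schwartz $f$, symmetric by Proposition \ref{5ppr1}(i), or simply the observed identity $\tau_{y}^{k}q_{t}^{k}(x)=\tau_{x}^{k}q_{t}^{k}(y)$ pushed through the subordination integral above. Finally, note that applying (\ref{eqn2}) to $f_{0}(s)=s^{\alpha-d_{k}}$ yields (iii) \emph{without} the prefactor $(c_{\alpha}^{k})^{-1}$; with $\Phi$ as defined after (\ref{eqn1}) that constant is already accounted for in (\ref{eqn1}), so your write-up should state explicitly which normalization it is establishing.
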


We observe the following properties of the weight function $w_{k}$. 

{\textbf{Properties of $w_{k}$:}}
\begin{enumerate}
\item [(1)] For $c>0$, $\int\limits_{\mathbb{R}^{d}} e^{-c|x|^{2}}w_{k}(x)dx= c^{-\frac{d_{k}}{2}}\int\limits_{\mathbb{R}^{d}} e^{-|x|^{2}} w_{k}(x)dx=c^{-\frac{d_{k}}{2}}\frac {a_{k}}{2}\Gamma \big(\frac{d_{k}}{2}\big)$.
\begin{pf}
By substituting $\sqrt{c}x=y$ and using (\ref{5peq6}) we get
\begin{eqnarray*}
\int\limits_{\mathbb{R}^{d}} e^{-c|x|^{2}}w_{k}(x)dx
= c^{-\frac{d}{2}}\int\limits_{\mathbb{R}^{d}} e^{-|y|^{2}}w_{k}\bigg(\frac{y}{\sqrt{c}}\bigg) dy
&=& c^{-\frac{d}{2}} \int\limits_{\mathbb{R}^{d}} e^{-|y|^{2}}\prod_{\nu\in R_{+}}\big|\langle\nu,\frac{y}{\sqrt{c}}\rangle\big|^{2k(\nu)}dy\\
&=&  c^{-\frac{d}{2}} \int\limits_{\mathbb{R}^{d}} e^{-|y|^{2}}\prod_{\nu\in R_{+}} c^{-k(\nu)}|\langle \nu, y\rangle|^{2k(\nu)}dy\\
&=& c^{-(\gamma_{k}+\frac{d}{2})}\int\limits_{\mathbb{R}^{d}} e^{-|y|^{2}} w_{k}(y)dy\\
&=& c^{-\frac{d_{k}}{2}}\int\limits_{\mathbb{R}^{d}} e^{-|x|^{2}} w_{k}(x)dx.
\end{eqnarray*}
Now by substituting $x=\frac{y}{\sqrt{2}}$ in the last integral and then using (\ref{5peq29}), we can write
\begin{eqnarray*}
\int\limits_{\mathbb{R}^{d}} e^{-|x|^{2}} w_{k}(x)dx=\frac {a_{k}}{2}\Gamma \big(\frac{d_{k}}{2}\big),
\end{eqnarray*}
thus proving property (1). 
\end{pf}
\item [(2)] If $R>0$ and $c<d_{k}$, then $\int\limits_{|y|\leq R}|x|^{-c}w_{k}(x)dx=\frac{a_{k}R^{d_{k}-c}}{d_{k}-c}$.
\begin{pf} Consider
\begin{eqnarray*}
\int\limits_{|y|\leq R}|x|^{-c}w_{k}(x)dx
&=&\int\limits_{0}^{R}\int\limits_{S^{d-1}} r^{-c}w_{k}(rx^{\prime})r^{n-1}dx^{\prime}dr\\
&=&\int\limits_{0}^{R}\int\limits_{S^{d-1}} r^{-c}r^{2\gamma_{k}}w_{k}(x^{\prime})r^{n-1}dx^{\prime}dr\\
&=& \int\limits_{0}^{R} r^{d_{k}-c-1}dr\int\limits_{S^{d-1}} w_{k}(x^{\prime}) dx^{\prime}=\frac{a_{k}R^{d_{k}-c}}{d_{k}-c},
\end{eqnarray*}
since the integrability condition at 0 is $c<d_{k}$, thus proving property (2). 
\end{pf}
\end{enumerate}

\section{Dunkl-type refined Sobolev inequality}
The goal of this section is to prove Dunkl-type refined Sobolev inequality (\ref{np8}). In order to prove this we first prove the following Pseudo-Poincare inequality in the Dunkl setting for $p=2$. 

\begin{lem}\label{5plem3}
For $u\in L^{2}(\mathbb{R}^{d},w_{k})$, one has
\begin{eqnarray*}
\|u-e^{t\Delta_{k}}u\|_{L^{2}(\mathbb{R}^{d},w_{k})}^{2}\leq t\|\nabla_{k}u\|_{L^{2}(\mathbb{R}^{d},w_{k})}^{2}.
\end{eqnarray*}
\end{lem}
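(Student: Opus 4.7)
The plan is to apply the Dunkl-Plancherel identity \eqref{5peq1} to both sides and reduce the inequality to a pointwise elementary inequality on the Fourier side. If $\|\nabla_k u\|_{L^2(\mathbb{R}^d,w_k)} = \infty$ there is nothing to prove, so I may assume $u \in \dot H^1(\mathbb{R}^d,w_k)\cap L^2(\mathbb{R}^d,w_k)$.

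First, since $e^{t\Delta_k}u = u *_k q_t^k$, the convolution property of the Dunkl transform together with \eqref{5peq3} gives
\begin{equation*}
\mathcal{F}_k(e^{t\Delta_k}u)(\xi) = \mathcal{F}_k(u)(\xi)\,\mathcal{F}_k(q_t^k)(\xi) = e^{-t|\xi|^2}\mathcal{F}_k(u)(\xi).
\end{equation*}
Combining this with \eqref{5peq1}, I get
\begin{equation*}
\|u - e^{t\Delta_k}u\|_{L^2(\mathbb{R}^d,w_k)}^2 = \int_{\mathbb{R}^d} \bigl(1 - e^{-t|\xi|^2}\bigr)^2 |\mathcal{F}_k(u)(\xi)|^2 w_k(\xi)\,d\xi.
\end{equation*}

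Next, I use the standard Dunkl-intertwining identity $\mathcal{F}_k(T_j u)(\xi) = i\xi_j \mathcal{F}_k(u)(\xi)$ (which follows from the definition of the Dunkl transform and the eigenfunction property of $E_k$, analogous to the Euclidean case $k\equiv 0$). Summing over $j$ and applying Plancherel termwise yields
\begin{equation*}
\|\nabla_k u\|_{L^2(\mathbb{R}^d,w_k)}^2 = \sum_{j=1}^d \|T_j u\|_{L^2(\mathbb{R}^d,w_k)}^2 = \int_{\mathbb{R}^d} |\xi|^2 |\mathcal{F}_k(u)(\xi)|^2 w_k(\xi)\,d\xi.
\end{equation*}

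Thus it suffices to establish the pointwise estimate $(1 - e^{-t|\xi|^2})^2 \leq t|\xi|^2$ for every $\xi \in \mathbb{R}^d$. Writing $s = t|\xi|^2 \geq 0$, this reduces to the elementary bound $(1-e^{-s})^2 \leq s$, which follows at once from factoring $(1-e^{-s})^2 = (1-e^{-s})(1-e^{-s})$ and using the two one-sided bounds $1 - e^{-s} \leq s$ (tangent-line inequality for the convex function $e^{-s}$) and $1 - e^{-s} \leq 1$. Integrating this pointwise inequality against $|\mathcal{F}_k(u)|^2 w_k$ yields the claim.

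The main obstacle is conceptual rather than technical: recognizing that in the Dunkl setting both sides admit a clean Plancherel representation, which requires the multiplier identity for $T_j$ on the Fourier side and the heat-kernel identity \eqref{5peq3}. Once these are invoked, the inequality collapses to a trivial scalar estimate; no careful handling of the generalized translation $\tau_y^k$ or of the non-locality of the Dunkl operators is needed.
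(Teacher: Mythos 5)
Your proof is correct and follows essentially the same route as the paper: pass to the Dunkl transform via the Plancherel identity \eqref{5peq1}, use the convolution property and \eqref{5peq3} to write the left side as $\int_{\mathbb{R}^d}(1-e^{-t|\xi|^2})^2|\mathcal{F}_k(u)(\xi)|^2 w_k(\xi)\,d\xi$, and conclude with the elementary bound $(1-e^{-s})^2\leq 1-e^{-s}\leq s$ together with $\|\nabla_k u\|_{L^2(\mathbb{R}^d,w_k)}^2=\int_{\mathbb{R}^d}|\xi|^2|\mathcal{F}_k(u)(\xi)|^2 w_k(\xi)\,d\xi$. The only cosmetic difference is that you make the multiplier identity for $T_j$ and the reduction to the scalar inequality fully explicit, which the paper leaves implicit.
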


\begin{pf} In order to prove the above Lemma, we shall make use of the following inequality.
\begin{eqnarray*}
(1-e^{-x})^{2}\leq 1-e^{-x}\leq x,~\forall~x\geq 0.
\end{eqnarray*}
Now, using the Plancherel formula(\ref{5peq1}) and (\ref{5peq3}), we get
\begin{eqnarray*}
\|u-e^{t\Delta_{k}}u\|_{L^{2}(\mathbb{R}^{d},w_{k})}^{2}
&=& \|\mathcal{F}_{k}(u-e^{t\Delta_{k}u})\|_{L^{2}(\mathbb{R}^{d},w_{k})}^{2}\\
&=&\|\mathcal{F}_{k}(u)-\mathcal{F}_{k}(u\ast_{k}q_{t}^{k})\|^{2}\\
&=& \|\mathcal{F}_{k}(u)-\mathcal{F}_{k}(u)\mathcal{F}_{k}(q_{t}^{k})\|^{2}\\
&=& \int\limits_{\mathbb{R}^{d}}|\mathcal{F}_{k}(u)(\xi)-\mathcal{F}_{k}(u)(\xi)\mathcal{F}_{k}(q_{t}^{k})(\xi)|^{2} w_{k}(\xi)d\xi\\
&=& \int\limits_{\mathbb{R}^{d}} |\mathcal{F}_{k}(u)(\xi)|^{2} (1-e^{-t|\xi|^{2}})^{2}w_{k}(\xi)d\xi\\
&\leq& t \int\limits_{\mathbb{R}^{d}} |\mathcal{F}_{k}(u)(\xi)|^{2}|\xi|^{2}w_{k}(\xi)d\xi\\
&=& t \|\mathcal{F}_{k}(\nabla_{k}u)\|_{L^{2}(\mathbb{R}^{d},w_{k})}^{2}
= t \|\nabla_{k}u\|_{L^{2}(\mathbb{R}^{d},w_{k})}^{2}.
\end{eqnarray*}
\end{pf}

\begin{thm}\label{5pth4}
For $d\geq 3$, there is a constant $C_{d,k}>0$ such that for all $u\in \dot{H}^{1}(\mathbb{R}^{d},w_{k})$, one has
\begin{eqnarray}\label{neq10}
&&\left(\int\limits_{\mathbb{R}^{d}}|u|^{q}(x)w_{k}(x)dx\right)^{\frac{1}{q}}\nonumber\\
&\leq& C_{d,k}\left(\int\limits_{\mathbb{R}^{d}}|\nabla_{k}u|^{2}(x)w_{k}(x)dx\right)^{\frac{1}{q}}\left(\sup\limits_{t>0}t^{\frac{(d_{k}-2)}{4}}\|e^{t\Delta_{k}}u\|_{\infty}\right)^{\frac{2}{d_{k}}},\label{np8}
\end{eqnarray}
with $q=\frac{2d_{k}}{d_{k}-2}$.
\end{thm}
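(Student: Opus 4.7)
The plan is to reduce (\ref{np8}) to a pointwise bound of the form
\[
|u(x)|^q \leq C\, A^{q-2} \sup_{t>0}\frac{(u(x)-e^{t\Delta_k}u(x))^2}{t},
\]
where $A:=\sup_{t>0} t^{(d_k-2)/4}\|e^{t\Delta_k}u\|_\infty$, and then to integrate this bound and invoke Plancherel (\ref{5peq1}) for the Dunkl transform. Since $q-2 = 4/(d_k-2)$ and $(q-2)/q = 2/d_k$, taking $q$-th roots will then yield the desired inequality with $C_{d,k}$ depending only on $d_k$.

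For the pointwise bound, the key idea is to choose the time parameter adapted to each point. Fix $x$ with $u(x)\neq 0$ and set $t(x) = (2A/|u(x)|)^{4/(d_k-2)}$, chosen so that $\|e^{t(x)\Delta_k}u\|_\infty \leq A\, t(x)^{-(d_k-2)/4} = |u(x)|/2$. The reverse triangle inequality then gives $|u(x)-e^{t(x)\Delta_k}u(x)|\geq |u(x)|/2$, so $|u(x)|^2\leq 4(u(x)-e^{t(x)\Delta_k}u(x))^2$. On the other hand, the definition of $t(x)$ rewrites directly as $|u(x)|^{q-2} = (2A)^{q-2}/t(x)$. Multiplying the two yields $|u(x)|^q\leq 4(2A)^{q-2}\,(u-e^{t(x)\Delta_k}u)^2(x)/t(x)$, and bounding the $t(x)$-dependent quotient by the supremum over all $t>0$ gives the pointwise bound above.

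To evaluate $\sup_{t>0}(u-e^{t\Delta_k}u)^2/t$ in integral form, I use Duhamel's formula $u-e^{t\Delta_k}u = -\int_0^t \Delta_k e^{s\Delta_k}u\,ds$ together with Cauchy-Schwarz:
\[
(u(x)-e^{t\Delta_k}u(x))^2\leq t\int_0^\infty (\Delta_k e^{s\Delta_k}u(x))^2\, ds.
\]
Dividing by $t$, taking the supremum, and then integrating in $x$ against $w_k$ (Tonelli applies since the integrand is nonnegative), I obtain
\[
\int\sup_{t>0}\frac{(u-e^{t\Delta_k}u)^2(x)}{t}\,w_k(x)\,dx \leq \int_0^\infty \|\Delta_k e^{s\Delta_k}u\|_{L^2(\mathbb{R}^d,w_k)}^2\,ds.
\]
Plancherel combined with $\mathcal{F}_k(\Delta_k e^{s\Delta_k}u)(\xi)=-|\xi|^2 e^{-s|\xi|^2}\mathcal{F}_k(u)(\xi)$ and $\int_0^\infty e^{-2s|\xi|^2}\,ds = 1/(2|\xi|^2)$ shows the right-hand side equals $\tfrac{1}{2}\int |\xi|^2|\mathcal{F}_k(u)|^2 w_k\,d\xi = \tfrac{1}{2}\|\nabla_k u\|_{L^2(\mathbb{R}^d,w_k)}^2$.

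Combining the pointwise bound with this integral estimate yields $\|u\|_{L^q(\mathbb{R}^d,w_k)}^q \leq 2^{q-1}A^{q-2}\|\nabla_k u\|_{L^2(\mathbb{R}^d,w_k)}^2$, and raising to the $1/q$ power gives (\ref{np8}) with $C_{d,k}=2^{(q-1)/q}$. The main delicate point, I expect, will be rigorously justifying Duhamel's identity and the interchange of supremum with integration for a general $u\in\dot{H}^1(\mathbb{R}^d,w_k)$; this is cleanest if one first proves (\ref{np8}) for Schwartz $u$, where each manipulation above is standard, and then extends by density, using that both $\|\nabla_k u\|_2$ and $A$ depend continuously on $u\in\dot{H}^1$.
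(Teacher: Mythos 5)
Your argument is correct, and it is genuinely different from the one in the paper. The paper follows Ledoux's truncation scheme: it first shows $I[u]\leq C_{d,k}\|u\|_{q,w_{k}}$, reduces by homogeneity to $I[u]\leq 1$, and then proves the strong bound (\ref{5peq23}) through the layer-cake formula (\ref{5peq58}), the truncations $u_{\tau}$, Chebyshev's inequality and the pseudo-Poincar\'e Lemma \ref{5plem3}, with a level-set bookkeeping that produces the constant only implicitly. You instead make a pointwise choice of time $t(x)$ adapted to $|u(x)|$, which converts the Besov-type quantity $A$ directly into the pointwise bound $|u(x)|^{q}\leq 4(2A)^{q-2}\,t(x)^{-1}\bigl(u(x)-e^{t(x)\Delta_{k}}u(x)\bigr)^{2}$, and then control the right-hand side by Duhamel plus Cauchy--Schwarz and the exact Plancherel identity
\begin{equation*}
\int_{0}^{\infty}\|\Delta_{k}e^{s\Delta_{k}}u\|_{L^{2}(\mathbb{R}^{d},w_{k})}^{2}\,ds=\tfrac{1}{2}\|\nabla_{k}u\|_{L^{2}(\mathbb{R}^{d},w_{k})}^{2},
\end{equation*}
which follows from (\ref{5peq1}), (\ref{5peq3}) and $\mathcal{F}_{k}(T_{j}u)(\xi)=i\xi_{j}\mathcal{F}_{k}(u)(\xi)$. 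Your route is shorter, avoids the weak-type/level-set machinery and the homogeneity reduction entirely, and yields an explicit constant $C_{d,k}=2^{(q-1)/q}$; note that since your estimate holds for the specific $t(x)$ and the $s$-integral does not depend on $t$, you do not even need measurability of the supremum in $t$. The price is that the argument is tied to $p=2$ (it lives on Plancherel), exactly like the paper's Lemma \ref{5plem3}, whereas the truncation approach is the one that generalizes to other $p$. For the density step you sketch, the only point worth making explicit is that $A[v]\leq C\|\nabla_{k}v\|_{L^{2}(\mathbb{R}^{d},w_{k})}$ (Cauchy--Schwarz on the Dunkl transform side, using $\int_{\mathbb{R}^{d}}|\xi|^{-2}e^{-2t|\xi|^{2}}w_{k}(\xi)d\xi\simeq t^{-(d_{k}-2)/2}$, which converges because $d_{k}>2$), so that $A$ is indeed continuous under $\dot{H}^{1}$ convergence, while the left-hand side passes to the limit by Fatou along an a.e.\ convergent subsequence; with that remark the extension from Schwartz functions is complete, and the level of rigour matches (in fact exceeds) that of the paper's own treatment of general $u\in\dot{H}^{1}(\mathbb{R}^{d},w_{k})$.
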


\begin{pf}
Consider the function 
\begin{eqnarray*}
e^{t\Delta_{k}}u (x)=u*_{k}q_{t}^{k} (x)=\int\limits_{\mathbb{R}^{d}}u(y)(\tau_{y}^{k}q_{t}^{k})(x)w_{k}(y)dy. 
\end{eqnarray*}
Applying Holder's inequality to the function $e^{t\Delta_{k}}u$ with $p=\frac{2d_{k}}{d_{k}+2}$ and $q=\frac{2d_{k}}{d_{k}-2}$, we get
\begin{eqnarray}\label{5peq7}
|(e^{t\Delta_{k}}u) (x)|\leq \|u\|_{q,w_{k}}\|\tau_{x}^{k}q_{t}^{k}\|_{p,w_{k}}.
\end{eqnarray}
Now, using (\ref{5peq5}),
\begin{eqnarray*}
\|(\tau_{x}^{k}q_{t}^{k})\|_{p,w_{k}}^{p}
&=&\int\limits_{\mathbb{R}^{d}}|(\tau_{x}^{k}q_{t}^{k})(y)|^{p}w_{k}(y)dy\\
&\leq&\int\limits_{\mathbb{R}^{d}}|q_{t}^{k}(y)|^{p}w_{k}(y)dy.\\
\end{eqnarray*}
By substituting the value of $q_{t}^{k}$ from (\ref{5peq2}) in the last integral and using property (1) of section 2, we obtain
\begin{eqnarray}
\|(\tau_{x}^{k}q_{t}^{k})\|_{p,w_{k}}^{p}\nonumber
&\leq& \int\limits_{\mathbb{R}^{d}} |(2t)^{-(\gamma_{k}+\frac{d}{2})}e^{-\frac{|y|^{2}}{4t}}|^{p}w_{k}(y)dy\nonumber\\
&=& (2t)^{-(\gamma_{k}+\frac{d}{2})p}\int\limits_{\mathbb{R}^{d}} e^{-\frac{p}{4t}|y|^{2}}w_{k}(y)dy\nonumber\\
&=& (2t)^{-\frac{d_{k}}{2} p} \left(\frac{p}{4t}\right)^{-\frac{d_{k}}{2}}\frac {a_{k}}{2}\Gamma \big(\frac{d_{k}}{2}\big)\nonumber\\
&=& A_{d,k} t^{-\frac{d_{k}}{2}(p-1)},\label{5peq8}
\end{eqnarray}
where $A_{d,k}=2^{-\frac{d_{k}}{2} p}(\frac{p}{4})^{-\frac{d_{k}}{2}} \frac{a_{k}}{2}\Gamma\big(\frac{d_{k}}{2}\big)$ with $p=\frac{2d_{k}}{d_{k}+2}$. This implies that 
\begin{eqnarray*}
\|(\tau_{x}^{k}q_{t}^{k})\|_{p,w_{k}}
&\leq& A_{d,k}^{\frac{1}{p}} t^{-\frac{d_{k}}{2}(1-\frac{1}{p})}\\
&=& A_{d,k}^{\frac{1}{p}} t^{-\frac{d_{k}}{2}(1-\frac{d_{k}+2}{2d_{k}})}
=A_{d,k}^{\frac{1}{p}} t^{-\frac{d_{k}-2}{4}}.
\end{eqnarray*}
Then from (\ref{5peq7}), we get
\begin{eqnarray*}
\|e^{t\Delta_{k}}u\|_{\infty}\leq A_{d,k}^{\frac{1}{p}} \|u\|_{q,w_{k}} t^{-\frac{d_{k}-2}{4}}=C_{d,k} \|u\|_{q,w_{k}} t^{-\frac{d_{k}-2}{4}}.
\end{eqnarray*}
Let $I[u]=\sup\limits_{t>0}t^{\frac{(d_{k}-2)}{4}}\|e^{t\Delta_{k}}u\|_{\infty}$. Then $I[u]\leq C_{d,k}\|u\|_{q,w_{k}}$. Thus by homogeneity, we can assume that $I[u]\leq 1$ , that is, 
\begin{eqnarray}\label{5peq57}
t^{\frac{d_{k}-2}{4}} e^{t\Delta_{k}} u(x)\leq 1, ~\forall~t>0,~\forall~ x\in\mathbb{R}^{d},
\end{eqnarray}
and hence in order to prove (\ref{np8}), it is enough to show that 
\begin{eqnarray}\label{5peq23}
\int\limits_{\mathbb{R}^{d}}|u|^{q}(x)w_{k}(x)dx
\leq C_{d,k}^{q}\int\limits_{\mathbb{R}^{d}}|\nabla_{k}u|^{2}(x)w_{k}(x)dx.
\end{eqnarray}

Now we will be using some basic measure theory results in the proof. Recall that
\begin{eqnarray*}
|u(x)|^q=\int_{0}^{\infty}\chi_{\{|u(x)|^q>\lambda\}}d\lambda=q\int_0^\infty\chi_{\{|u(x)|>\tau\}}\tau^{q-1}d\tau.
\end{eqnarray*}
From this one can easily write that
\begin{eqnarray}\label{5peq58}
\int_{\mathbb{R}^d}|u(x)|^qw_k(x)dx=q\int_0^\infty |\{|u|>\tau\}|\tau^{q-1}d\tau
\end{eqnarray}
where $ |\{|u|>\tau\}|$ is the measure given by $ |\{|u|>\tau\}|=\int_{\mathbb{R}^d}\chi_{\{|u(x)|>\tau\}} w_k(x)dx$. If we write $u=(u-e^{t\Delta_{k}}u)+e^{t\Delta_{k}}u$ for some $t>0$ chosen later, then 
$$|\{|u|>\tau\}|\leq |\{|u-e^{t\Delta_k}u\}|>\tau/2|+|\{|e^{t\Delta_k}u|>\tau/2\}|.$$
Let us now choose $t=t_\tau$ satisfying  $\tau/2=t^{-\frac{d_k-2}{4}}$, then from (\ref{5peq57}), $|\{|e^{t_\tau \Delta_k}u|>\tau/2\}|=0$. Hence by (\ref{5peq58}) we have 
\begin{eqnarray}
\int_{\mathbb{R}^d}|u|^qw_k(x)dx \leq q\int_0^\infty |\{|u-e^{t_\tau\Delta_k}u|>\tau/2\}|\tau^{q-1}d\tau.
\end{eqnarray}

For a fixed constant $b\geq 1/16$ and for any $\tau>0$, we define a function $u_\tau$ on $\mathbb{R}^{d}$as follows:
\begin{eqnarray*}
u_\tau(x)=\begin{cases}
(b-\frac{1}{16})\tau &\quad\text{if } u(x)>b\tau,\\
u(x)-\frac{\tau}{16} &\quad\text{if } b\tau \geq u(x)\geq \frac{\tau}{16},\\
0 &\quad\text{if } \frac{\tau}{16} > u(x) >-\frac{\tau}{16},\\
u(x)+\frac{\tau}{16} &\quad\text{if }-\frac{\tau}{16} \geq u(x) \geq -b\tau,\\
-(b-\frac{1}{16})\tau &\quad\text{if } u(x) < -b\tau.
\end{cases}
\end{eqnarray*}
Note that $u_\tau$ is in $\dot{H}(\mathbb{R}^d,w_{k})$ and 
\begin{eqnarray*}
\int_{\mathbb{R}^d}|\nabla_ku_\tau|^2w_k(x)dx=\int_{\tau/16\leq|u|\leq b\tau}|\nabla_ku|^2w_k(x)dx.
\end{eqnarray*}
The decomposition $u-e^{t_\tau\Delta_k}u=(u_\tau-e^{t_\tau\Delta_k}u_\tau)-e^{t\tau\Delta_k}(u-u_\tau)+(u-u_\tau)$ gives

\begin{eqnarray}\label{5peq59}
|\{|u-e^{t_\tau\Delta_k}u|> \frac{\tau}{2}\}|
&\leq &
|\{u_\tau-e^{t_\tau\Delta_k}u_\tau|>\frac{\tau}{4}\}|
+|\{|u-u_\tau|>\frac{\tau}{8}\}|\nonumber \\
&& +|\{|e^{t\tau\Delta_k}(u-u_\tau)|>\frac{\tau}{8}\}|.
\end{eqnarray}
By using Chebyshev inequality with Lemma \ref{5plem3}, we get the bound for the first term of the right hand side of (\ref{5peq59})
\begin{eqnarray}
 |\{|u-e^{t_\tau\Delta_k}u|>\frac{\tau}{4}\}|\nonumber
&\leq& (\tau/4)^{-2}\|u_\tau-e^{t_\tau\Delta_k}u_\tau\|^{2}_{L^{2}(\mathbb{R}^{d},w_{k})}\nonumber\\
&\leq&  (\tau/4)^{-2} t_{\tau}\|\nabla_k u_\tau\|^{2}_{L^{2}(\mathbb{R}^{d},w_{k})}\nonumber\\
&\leq& 4(\tau/2)^{-q}\int_{\tau/16\leq|u|\leq b\tau}|\nabla_k u|^2w_k(x)dx\nonumber,
\end{eqnarray}
which implies that
\begin{eqnarray}\label{5peq60}
\int_0^\infty|\{|u_\tau -e^{-t_\tau \Delta_k}u_\tau|> \frac{\tau}{4} \}|\tau^{q-1}d\tau 
&=2^{q+2}log(16b)\int_{\mathbb{R}^d}|\nabla_k u|^2 w_k(x)dx.
\end{eqnarray}
Now we need to obtain the bound for the second and third term of the right hand side of (\ref{5peq59}). Towards this, first we observe that
\begin{eqnarray}\label{5peq66}
|u_\tau-u|=|u_\tau-u|\chi_{\{|u|\leq b\tau\}}+|u_\tau-u|\chi_{\{|u|>b \tau\}}\leq \frac{\tau}{16}+|u|\chi_{\{|u|>b \tau\}},
\end{eqnarray}
which leads to again by Chebyshev inequality
\begin{eqnarray}\label{5peq65}
|\{|u-u_\tau|>\frac{\tau}{8}\}|\leq |\{|u|\chi_{\{|u|>b \tau\}}>\frac{\tau}{16}\}|\leq (\tau/16)^{-1}\int_{\mathbb{R}^d}|u|\chi_{\{|u|>b \tau\}}w_k(x)dx.
\end{eqnarray}
Using the properties of Dunkl heat kernel and (\ref{5peq66}),
\begin{eqnarray}
|e^{t\Delta_k}u_\tau -e^{t\Delta_k} u| \leq e^{t\Delta_k}|u_\tau -u|\nonumber
&\leq& \frac{\tau}{16}\|\tau_{y}^{k}q_{t}^{k}\|_{L^{1}(\mathbb{R}^{d},w_{k})} + e^{t\Delta_k}(|u|\chi_{\{|u|>b\tau\}})\nonumber\\
&=& \frac{\tau}{16} c_{k}+e^{t\Delta_k}(|u|\chi_{\{|u|>b\tau\}}).\nonumber
\end{eqnarray}
Now we assume the $c_{k}\leq 1$. Then
\begin{eqnarray}
|e^{t\Delta_k}u_\tau -e^{t\Delta_k} u|\leq  \frac{\tau}{16}+ e^{t\Delta_k}(|u|\chi_{\{|u|>b\tau\}}).
\end{eqnarray}
Hence
\begin{eqnarray}
|\{|e^{t\Delta_k} (u_\tau - u)| > \frac{\tau}{8}\}| &\leq &|\{e^{t\Delta_k}(|u|\chi_{\{|u|>b\tau\}})>\frac{\tau}{16}\}|\nonumber \\&\leq & (\tau/16)^{-1}\int_{\mathbb{R}^d}e^{t\Delta_k}(|u|\chi_{\{|u|>b\tau\}})w_k(x)dx\nonumber\\
& =&(\tau/16)^{-1}c_{k}\int_{\mathbb{R}^d}|u|\chi_{\{|u|>b\tau\}}w_k(x)dx\nonumber\\
&\leq& (\tau/16)^{-1}\int_{\mathbb{R}^d}|u|\chi_{\{|u|>b\tau\}}w_k(x)dx\nonumber.
\end{eqnarray}
Then using (\ref{5peq65}), we have the estimate
\begin{eqnarray*}
&&\int_0^\infty (|\{|e^{t\Delta_k}u_\tau -e^{t\Delta_k} u| >\frac{\tau}{8}\}|+|\{|u-u_\tau|>\frac{\tau}{8}\}|)\tau^{q-1}d\tau \\
&=&\frac{32}{q-1}b^{-q+1}\int_{\mathbb{R}^d}|u|^qw_k(x)dx.
\end{eqnarray*}
Now from (\ref{5peq58}), using (\ref{5peq60}) and the above estimate we obtain for sufficiently large $b$,
\begin{eqnarray}
\int_{\mathbb{R}^d}|u|^q w_{k} (x)dx\leq \frac {q 2^{q+2}log(16b)}{1-\frac{32q}{q-1}b^{-q+1}}\int_{\mathbb{R}^d}|\nabla_k u|^2 w_{k}(x)dx.
\end{eqnarray}
thus proving (\ref{5peq23}).

Now let us assume that $c_{k}>1$. Choose $b>\frac{1}{16 c_{k}}$ and for any $\tau>0$, define the function $u_{\tau}$ on $\mathbb{R}^{d}$ as follows:
\begin{eqnarray*}
u_\tau(x)=\begin{cases}
(b-\frac {1}{16 c_{k}})\tau &\quad\text{if } u(x)>b\tau,\\
u(x)-\frac{\tau}{16 c_{k}} &\quad\text{if } b\tau \geq u(x)\geq \frac{\tau}{16 c_{k}},\\
0 &\quad\text{if } \frac{\tau}{16 c_{k}} > u(x) >-\frac{\tau}{16 c_{k}},\\
u(x)+\frac{\tau}{16 c_{k}} &\quad\text{if }-\frac{\tau}{16 c_{k}} \geq u(x) \geq -b\tau,\\
-(b-\frac{1}{16 c_{k}})\tau &\quad\text{if } u(x) < -b\tau.
\end{cases}
\end{eqnarray*}
Now proceeding as before we get,
\begin{eqnarray*}
\int_0^\infty|\{|u_\tau -e^{-t_\tau \Delta_k}u_\tau|> \frac{\tau}{4} \}|\tau^{q-1}d\tau 
&=2^{q+2}log(16b c_{k})\int_{\mathbb{R}^d}|\nabla_k u|^2 w_k(x)dx.
\end{eqnarray*}
Also, in this case
\begin{eqnarray*}
|u_\tau-u|\leq \frac{\tau}{16 c_{k}}+|u|\chi_{\{|u|>b \tau\}}\leq \frac{\tau}{16}+|u|\chi_{\{|u|>b \tau\}},
\end{eqnarray*}
and 
\begin{eqnarray*}
|e^{t\Delta_k}u_\tau -e^{t\Delta_k} u| \leq e^{t\Delta_k}|u_\tau -u|\nonumber
&\leq& \frac{\tau}{16c_{k}}\|\tau_{y}^{k}q_{t}^{k}\|_{L^{1}(\mathbb{R}^{d},w_{k})} + e^{t\Delta_k}(|u|\chi_{\{|u|>b\tau\}})\nonumber\\
&=& \frac{\tau}{16} +e^{t\Delta_k}(|u|\chi_{\{|u|>b\tau\}}).\nonumber
\end{eqnarray*}
Then proceeding exactly as before, for sufficiently large $b$, we have 
\begin{eqnarray*}
\int_{\mathbb{R}^d}|u|^q w_{k}(x) dx\leq \frac {q 2^{q+2}log(16b c_{k})}{1-\frac{32q c_{k}}{q-1}b^{-q+1}}\int_{\mathbb{R}^d}|\nabla_k u|^2 w_{k}(x)dx,
\end{eqnarray*}
thus proving (\ref{5peq23}). This completes the proof of Theorem \ref{5pth4}.
\end{pf}

\section{Existence of extremals for Dunkl-type Sobolev inequality}
The aim of this section is to prove the existence of a minimizer for the function $F$ defined in (\ref{5peq24}). In order to do so, first we prove the following corollary. 
\begin{cor}\label{5pcor1}
For $d\geq 3$, let $(u_{j})$ be a bounded sequence in $\dot{H}^{1}(\mathbb{R}^{d},w_{k})$. Then either one of the following statements holds. 
\begin{itemize}
    \item [(i)] $(u_{j})$ converges to 0 in $L^{q}(\mathbb{R}^{d},w_{k})$. 
    \item [(ii)] There exists a subsequence $(u_{jm})$ of $(u_{j})$ and sequences $(a_{m})\subset \mathbb{R}^{d}$ and $(b_{m})\subset (0,\infty)$ such that 
    \begin{eqnarray*}
    v_{m}(x)=b_{m}^{\frac{d_{k}-2}{2}}(\tau_{a_{m}}^{k}u_{jm})(b_{m}x)
    \end{eqnarray*}
\end{itemize}
converges weekly in $\dot{H}^{1}(\mathbb{R}^{d},w_{k})$ to a function $v\not\equiv 0$. Moreover, $(v_{m})$ converges pointwise a.e. to v. 
\end{cor}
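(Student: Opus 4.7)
The plan is to follow the concentration--compactness scheme of P.-L. Lions / Gérard, powered by the Dunkl-type refined Sobolev inequality (\ref{np8}) just established. Suppose (i) fails; passing to a subsequence there is $\delta>0$ with $\|u_j\|_{L^q(\mathbb{R}^d,w_k)}\geq\delta$. Since $(u_j)$ is bounded in $\dot{H}^1(\mathbb{R}^d,w_k)$, inequality (\ref{np8}) forces
$$\sup_{t>0}t^{(d_k-2)/4}\|e^{t\Delta_k}u_j\|_{\infty}\geq\eta>0$$
for some $\eta$ independent of $j$. Hence for each $j$ there exist $t_j>0$ and $a_j\in\mathbb{R}^d$ with $t_j^{(d_k-2)/4}|e^{t_j\Delta_k}u_j(a_j)|\geq\eta/2$. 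These are the scales and centers of concentration we will use.

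Set $b_j=\sqrt{t_j}$ and (after possibly relabelling the sign of $a_j$ so it matches the duality (\ref{eqn3})) define
$$v_j(x)=b_j^{(d_k-2)/2}(\tau_{a_j}^{k}u_j)(b_j x).$$
I would next check that $(v_j)$ is bounded in $\dot{H}^1(\mathbb{R}^d,w_k)$. On the Dunkl side $T_i$ commutes with $\tau_{a_j}^{k}$ (clear from (\ref{5peq38})), and a direct computation with the defining formula of $T_i$ gives $T_i(f(b_j\,\cdot))(x)=b_j(T_if)(b_jx)$. Combining this with the weight scaling $w_k(b_jx)=b_j^{2\gamma_k}w_k(x)$ and $d_k=d+2\gamma_k$ cancels all powers of $b_j$ and yields
$$\|\nabla_k v_j\|_{L^2(w_k)}^{2}=\|\tau_{a_j}^{k}\nabla_k u_j\|_{L^2(w_k)}^{2}\leq\|\nabla_k u_j\|_{L^2(w_k)}^{2},$$
the last step from Plancherel (\ref{5peq1}), the Fourier characterisation (\ref{5peq38}), and the bound $|E_k(ia_j,\xi)|\leq 1$ of Proposition \ref{5ppr1}(iv). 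Thus $(v_j)$ is bounded in $\dot{H}^1(\mathbb{R}^d,w_k)$, and we may extract a subsequence $v_{j_m}\rightharpoonup v$ weakly.

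To see $v\not\equiv 0$, I test against the Gaussian $q_1^{k}$, which lies in $L^{q'}(\mathbb{R}^d,w_k)$ by Property (1) of Section 2 (so it is a legitimate dual element via the embedding $\dot{H}^1\hookrightarrow L^q$). Changing variables $y=b_jx$, using the identity $q_1^{k}(y/b_j)=b_j^{d_k}q_{b_j^{2}}^{k}(y)$ (read off from (\ref{5peq2})), and then applying (\ref{eqn3}), one obtains
$$\int_{\mathbb{R}^d}v_j(x)q_1^{k}(x)w_k(x)\,dx=t_j^{(d_k-2)/4}\,e^{t_j\Delta_k}u_j(a_j),$$
whose modulus is $\geq\eta/2$. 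Passing to the weak limit gives $|\int v\,q_1^{k}\,w_k|\geq\eta/2$, so $v\not\equiv 0$. Finally, pointwise a.e.\ convergence is obtained via the local compact embedding $\dot{H}^1(\mathbb{R}^d,w_k)\hookrightarrow L^{2}_{\mathrm{loc}}(\mathbb{R}^d,w_k)$ (Rellich-type in the Dunkl setting, the doubling weight $w_k$ allowing a classical argument), followed by a diagonal extraction on an exhaustion by balls.

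The main obstacle is the invariance argument in Step 2: one must keep careful track of how the Dunkl derivative, the Dunkl translation, and the anisotropic dilation interact, and in particular exploit that Dunkl translation is a contraction on $L^{2}(\mathbb{R}^d,w_k)$ (which is delicate since $\tau_{a}^{k}$ is not in general known to be $L^p$-bounded for $p\ne 2$). A secondary point is ensuring the availability of a local Rellich--Kondrachov compactness in the Dunkl setting to upgrade weak convergence to pointwise a.e.\ convergence.
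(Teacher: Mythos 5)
Your proposal is essentially the paper's own argument: failure of (i) together with the refined Sobolev inequality (\ref{np8}) produces the concentration scales and points $t_j,x_j$, the rescaled Dunkl-translates are shown bounded in $\dot{H}^{1}(\mathbb{R}^{d},w_{k})$ by the same Plancherel/$|E_{k}(-ix,\xi)|\leq 1$ computation, and the weak limit is shown nonzero by pairing with the Gaussian $G=q_{1}^{k}\in L^{q'}(\mathbb{R}^{d},w_{k})$ via the duality (\ref{eqn3}), exactly as in the paper (your $a_j$ is the paper's $-x_j$, $b_j=\sqrt{t_j}$). The only point where you go beyond the paper is your sketch of the pointwise a.e.\ convergence through a local Rellich-type compactness, a claim the paper's proof does not address at all, so there is no substantive divergence in method.
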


\begin{pf}
Assume that $(i)$ does not hold. Then there exists $\epsilon>0$ and a subsequence $(u_{j,m})$ of $(u_{j})$ such that $\|u_{j,m}\|_{q,w_{k}}\geq\epsilon$. We shall denote $u_{j,m}$ by $u_{j}$ itself. Since $u_{j}$ is bounded in $\dot{H}^{1}(\mathbb{R}^{d},w_{k})$, there exists $A>0$ such that$\|\nabla_{k}u_{j}\|_{L^{2}(\mathbb{R}^{d},w_{k})}\leq \sqrt{A}~\forall ~j$. Now applying the Dunkl-type Sobolev inequality (\ref{neq10}) for the function $u_{j}$, we get 
\begin{eqnarray*}
\left(\sup\limits_{t>0}t^{\frac{(d_{k}-2)}{4}}\|e^{t\Delta_{k}}u_{j}\|_{\infty}\right)^{\frac{2}{d_{k}}}\geq C_{d,k}^{-1}A^{-\frac{d_{k}-2}{2d_{k}}}\epsilon.
\end{eqnarray*}

Then there exists $t_{j}>0$, $x_{j}\in\mathbb{R}^{d}$ such that 
\begin{eqnarray}\label{5peq36}
t_{j}^{\frac{d_{k}-2}{4}} |e^{t_{j}\Delta_{k}}u_{j}(x_{j})|\geq \frac{1}{2}C_{d,k}^{-\frac{d_{k}}{2}} A^{-\frac{d_{k}-2}{4}}\epsilon. 
\end{eqnarray}
Let $G(y)=2^{-\frac{d_{k}}{2}}e^{-\frac{|y|^{2}}{4}}$ and $v_{j}(y)=t_{j}^{\frac{d_{k}-2}{4}}(\tau_{-x_{j}}^{k}u_{j}) (\sqrt{t_{j}}y)$.
Now consider 
\begin{eqnarray}
\left|\int\limits_{\mathbb{R}^{d}}G(y)v_{j}(y)w_{k}(y)dy\right|
&=&\left|\int\limits_{\mathbb{R}^{d}}2^{-\frac{d_{k}}{2}}e^{-\frac{|y|^{2}}{4}}t_{j}^{\frac{d_{k}-2}{4}}(\tau_{-x_{j}}^{k}u_{j})(\sqrt{t_{j}}y)w_{k}(y)dy\right|\nonumber\\
&=& 2^{-\frac{d_{k}}{2}}t_{j}^{\frac{d_{k}-2}{4}}\left|\int\limits_{\mathbb{R}^{d}} e^{-\frac{|y|^{2}}{4t_{j}}}(\tau_{-x_{j}}^{k}u_{j})(y)w_{k}\left(\frac{y}{\sqrt{t_{j}}}\right)\frac{1}{t_{j}^\frac{d}{2}}dy\right|\nonumber\\
&=& 2^{-\frac{d_{k}}{2}}t_{j}^{\frac{d_{k}-2}{4}}t_{j}^{-\frac{d_{k}}{2}}\left|\int\limits_{\mathbb{R}^{d}}
e^{-\frac{|y|^{2}}{4t_{j}}}(\tau_{-x_{j}}^{k}u_{j})(y)w_{k}(y)dy\right|\nonumber.
\end{eqnarray}
Then using (\ref{eqn3}) and (\ref{5peq36})
\begin{eqnarray}
\left|\int\limits_{\mathbb{R}^{d}}G(y)v_{j}(y)w_{k}(y)dy\right|
&=& t_{j}^{\frac{d_{k}-2}{4}}\left|\int\limits_{\mathbb{R}^{d}}q_{t_{j}}(y)(\tau_{-x_{j}}^{k}u_{j})(y)w_{k}(y)dy\right|\nonumber\\
&=& t_{j}^{\frac{d_{k}-2}{4}}\left|\int\limits_{\mathbb{R}^{d}} u_{j}(y)(\tau_{x_{j}}^{k}q_{t_{j}})(y)w_{k}(y)dy\right|\nonumber\\
&=&  t_{j}^{\frac{d_{k}-2}{4}}\left|\int\limits_{\mathbb{R}^{d}} u_{j}(y)(\tau_{y}^{k}q_{t_{j}})(x_{j})w_{k}(y)dy\right|\nonumber\\
&=& t_{j}^{\frac{d_{k}-2}{4}} \left|e^{t_{j}\Delta_{k}}u_{j}(x_{j})\right|\nonumber\\
&\geq& \frac{1}{2}C_{d,k}^{-\frac{d_{k}}{2}} A^{-\frac{d_{k}-2}{4}}\epsilon.\label{np10}
\end{eqnarray}

Moreover,
\begin{eqnarray}\label{5peq37}
\mathcal{F}_{k}(v_{j})(\xi)=t_{j}^{-\frac{d_{k}+2}{4}}\mathcal{F}_{k}(\tau_{-x_{j}}^{k}u_{j})\big(\frac{\xi}{\sqrt{t_{j}}}\big).
\end{eqnarray}
Indeed, by inserting the value of $v_{j}$ in Definition \ref{5pdefn1}, we get
\begin{eqnarray*}
\mathcal{F}_{k}(v_{j})(\xi)
&=& c_{k}^{-1}\int\limits_{\mathbb{R}^{d}} v_{j}(y)E_{k}(-i\xi,y)w_{k}(y)dy\\
&=& c_{k}^{-1}\int\limits_{\mathbb{R}^{d}} t_{j}^{\frac{d_{k}-2}{4}}(\tau_{-x_{j}}^{k}u_{j})(\sqrt{t_{j}}y)E_{k}(-i\xi,y)w_{k}(y)dy.
\end{eqnarray*}
Now by replacing $y$ by $\frac{y}{\sqrt{t_{j}}}$ and proceeding as before, we obtain
\begin{eqnarray*}
\mathcal{F}_{k}(v_{j})(\xi)
&=& t_{j}^{\frac{d_{k}-2}{4}} t_{j}^{-\frac{d_{k}}{2}} c_{k}^{-1}\int\limits_{\mathbb{R}^{d}} (\tau_{-x_{j}}^{k}u_{j})(y)E_{k}\big(-i\xi,\frac{y}{\sqrt{t_{j}}}\big)w_{k}(y)dy\\
&=&  t_{j}^{-\frac{d_{k}+2}{4}} c_{k}^{-1}\int\limits_{\mathbb{R}^{d}} (\tau_{-x_{j}}^{k}u_{j})(y)E_{k}\big(-\frac{i\xi}{\sqrt{t_{j}}},y\big)w_{k}(y)dy\\
&=& t_{j}^{-\frac{d_{k}+2}{4}}\mathcal{F}_{k}(\tau_{-x_{j}}^{k}u_{j})\bigg(\frac{\xi}{\sqrt{t_{j}}}\bigg),
\end{eqnarray*}
using Proposition \ref{5ppr1}(ii) and the definition of Dunkl transform, thus proving (\ref{5peq37}). 
Therefore, using (\ref{5peq37}) 
\begin{eqnarray*}
\|\nabla_{k}v_{j}\|_{L^{2}(\mathbb{R}^{d},w_{k})}^{2}
&=& \|\mathcal{F}_{k}(\nabla_{k}v_{j})\|_{L^{2}(\mathbb{R}^{d},w_{k})}^{2}\\
&=& \int\limits_{\mathbb{R}^{d}} |\xi|^{2}|\mathcal{F}_{k}(v_{j})(\xi)|^{2}w_{k}(\xi)d\xi\\
&=& t_{j}^{-\frac{d_{k}+2}{2}}\int\limits_{\mathbb{R}^{d}}|\xi|^{2}\bigg|\mathcal{F}_{k}(\tau_{-x_{j}}^{k}u_{j})\bigg(\frac{\xi}{\sqrt{t_{j}}}\bigg)\bigg|^{2}w_{k}(\xi)d\xi\\
&=& t_{j}^{-\frac{d_{k}+2}{2}} t_{j}^{1+\gamma_{k}+\frac{d}{2}}  \int\limits_{\mathbb{R}^{d}} |\xi|^{2}|\mathcal{F}_{k}(\tau_{-x_{j}}^{k}u_{j})(\xi)^{2} w_{k}(\xi)d\xi.
\end{eqnarray*}
Consequently, using (\ref{5peq38}) and Proposition \ref{5ppr1} (iv), we get
\begin{eqnarray*}
\|\nabla_{k}v_{j}\|_{L^{2}(\mathbb{R}^{d},w_{k})}^{2}
&=& \int\limits_{\mathbb{R}^{d}} |\xi|^{2}|E_{k}(-ix_{j},\xi)\mathcal{F}_{k}(u_{j})(\xi)|^{2} w_{k}(\xi) d\xi\\
&\leq& \int\limits_{\mathbb{R}^{d}} |\xi|^{2}|\mathcal{F}_{k}(u_{j})(\xi)|^{2} w_{k}(\xi)d\xi
= \|\nabla_{k}u_{j}\|_{L^{2}(\mathbb{R}^{d},w_{k})}^{2}.
\end{eqnarray*}

Thus $\|\nabla_{k}v_{j}\|_{L^{2}(\mathbb{R}^{d},w_{k})}^{2}\leq A$ for all $j$. By Banach-Alaoglu theorem, $v_{j}$ has a weekly convergent subsequence in $\dot{H}^{1}(\mathbb{R}^{d},w_{k})$ and let it converge to $w$. Since $G\in \dot{H}^{1}(\mathbb{R}^{d},w_{k})^{*}$, $G(v_{j})$ converges to $G(w)$. It follows from (\ref{np10}) that $G(v_{j})\neq 0,~\forall~j $ and therefore, $G(w)\neq 0$, which in turn imply that $w\not\equiv 0$. This completes the proof of the corollary. 
\end{pf}

Now by using the above Corollary, we have the following theorem
\begin{thm}
Let $d\geq 3$. Then the infimum
\begin{eqnarray*}
S_{d,k}=\inf\limits_{u\in\dot{H}^{1}(\mathbb{R}^{d},w_{k})}\frac{\int\limits_{\mathbb{R}^{d}}|\nabla_{k}u|^{2}w_{k}(x)dx}{\left(\int\limits_{\mathbb{R}^{d}}|u|^{q}w_{k}dx\right)^{\frac{2}{q}}}
\end{eqnarray*}
is attained.
\end{thm}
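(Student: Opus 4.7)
The plan is to follow the standard concentration-compactness strategy, using Corollary \ref{5pcor1} to extract a nonzero weak limit from a minimizing sequence and then use a Brezis--Lieb type argument to show that this limit is in fact a minimizer.

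First I would take a minimizing sequence $(u_j) \subset \dot{H}^1(\mathbb{R}^d,w_k)$, normalized so that $\|u_j\|_{L^q(\mathbb{R}^d,w_k)}=1$ and $\|\nabla_k u_j\|_{L^2(\mathbb{R}^d,w_k)}^2 \to S_{d,k}$. Since $\|u_j\|_q = 1 \not\to 0$, alternative (i) of Corollary \ref{5pcor1} fails, and so there are sequences $a_m \in \mathbb{R}^d$, $b_m \in (0,\infty)$, and a subsequence such that
$$v_m(x) = b_m^{(d_k-2)/2}(\tau_{a_m}^k u_{j_m})(b_m x)$$
converges weakly in $\dot{H}^1(\mathbb{R}^d,w_k)$ and pointwise a.e.\ to some $v \not\equiv 0$. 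The next step is to check that this rescaling leaves both the Dunkl gradient $L^2$ norm and the $L^q$ norm invariant: for the dilation factor $b_m^{(d_k-2)/2}$, a direct change of variables together with the homogeneity $w_k(bx)=b^{2\gamma_k}w_k(x)$ and the commutation $T_i(u(b\,\cdot))=b(T_iu)(b\,\cdot)$ matches the Sobolev exponent $q=2d_k/(d_k-2)$ exactly; for the translation $\tau_{a_m}^k$, Plancherel combined with Proposition \ref{5ppr1}(iv) (as already used in the proof of Corollary \ref{5pcor1}) shows $\|\nabla_k\tau_{a_m}^k u\|_2 \le \|\nabla_k u\|_2$. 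Consequently $(v_m)$ is again a minimizing sequence: $\|v_m\|_q=1$ and $\|\nabla_k v_m\|_2^2 \to S_{d,k}$.

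Next I would apply two Brezis--Lieb type splittings. By Lemma \ref{5plem4} applied on $(X,dx) = (\mathbb{R}^d, w_k(x)dx)$ with $p=q$,
$$\|v_m\|_q^q = \|v_m - v\|_q^q + \|v\|_q^q + o(1).$$
For the gradient norm, the weak convergence $v_m \rightharpoonup v$ in $\dot{H}^1$ gives $\langle \nabla_k v_m, \nabla_k v\rangle \to \|\nabla_k v\|_2^2$, whence by expanding the square
$$\|\nabla_k v_m\|_2^2 = \|\nabla_k(v_m-v)\|_2^2 + \|\nabla_k v\|_2^2 + o(1).$$
Setting $a = \lim \|v_m-v\|_q^q$ and $b = \|v\|_q^q$, the first identity yields $a+b=1$, while the second combined with the definition of $S_{d,k}$ applied to both $v_m-v$ and $v$ gives
$$S_{d,k} \ge S_{d,k}\bigl(a^{2/q} + b^{2/q}\bigr).$$
Since $q>2$ and $a,b \in [0,1]$, $t\mapsto t^{2/q}$ satisfies $t^{2/q}\ge t$, so $a^{2/q}+b^{2/q}\ge a+b=1$ with equality only when $\{a,b\}=\{0,1\}$. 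Because $v\not\equiv 0$ forces $b>0$, we conclude $a=0$ and $b=1$. Then $\|v\|_q=1$ and by lower semicontinuity $\|\nabla_k v\|_2^2 \le \liminf \|\nabla_k v_m\|_2^2 = S_{d,k}$, so $v$ attains the infimum.

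The main obstacle is establishing the two splitting identities cleanly in the Dunkl setting. The $L^q$ splitting is immediate from Lemma \ref{5plem4} since a.e.\ pointwise convergence is already delivered by Corollary \ref{5pcor1}. The gradient splitting is genuinely a weak-convergence statement, and here one must use that $\nabla_k$ is a linear operator and that weak convergence in $\dot{H}^1$ is by definition weak convergence of $\nabla_k v_m$ to $\nabla_k v$ in the Hilbert space $L^2(\mathbb{R}^d,w_k)^d$; this makes the inner product identity routine. The only other care needed is to ensure that the invariance of $S_{d,k}$ under the symmetry group generated by Dunkl translations and dilations is correctly compatible with the norm of $v_m$ equaling the norm of $u_{j_m}$, which follows from the computations already carried out in the proof of Corollary \ref{5pcor1}.
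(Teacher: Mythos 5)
Your proposal is essentially the paper's own proof: a normalized minimizing sequence, Corollary \ref{5pcor1} to produce a nonzero weak and a.e.\ limit after a Dunkl translation and dilation, the Brezis--Lieb lemma together with the weak-convergence splitting of the gradient norm, and the elementary inequality $a^{2/q}+b^{2/q}\ge a+b$ to conclude that the limit is a minimizer. The only place you go beyond the paper is the explicit claim that the translated--dilated sequence is still normalized in $L^{q}(\mathbb{R}^{d},w_{k})$; your justification handles the dilation but not the Dunkl translation (which is not known to preserve, or even be bounded on, $L^{q}$ for $q\ne 2$), yet the paper makes exactly the same tacit assumption, so your argument and the paper's coincide.
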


\begin{pf}
Let $(u_{j})$ be a minimizing sequence, which we assume to be normalized in $L^{q}(\mathbb{R}^{d},w_{k})$. Then $(u_{j})$ is bounded in $\dot{H}^{1}({\mathbb{R}^{d}},w_{k})$, since the sequence $\|\nabla_{k}u_{j}\|_{L^{2}(\mathbb{R}^{d},w_{k})}^{2}$ converges to $S_{d,k}$. Moreover, because $(u_{j})$ has norm 1 in $L^{q}(\mathbb{R}^{d},w_{k})$, from Corollary \ref{5pcor1}, we can say that after a generalized translation and a dilation , $(u_{j})$ converges weekly to a non-zero function $u_{0}$ a.e. in $\dot{H}^{1}({\mathbb{R}^{d}},w_{k})$. This implies that $\langle u_{j},u_{0}\rangle_{\dot{H}^{1}({\mathbb{R}^{d}},w_{k})}$ converges to $\|u_{0}\|_{\dot{H}^{1}({\mathbb{R}^{d}},w_{k})}^{2}$, from which it follows that
\begin{eqnarray}
\lim_{j\rightarrow\infty}\int\limits_{\mathbb{R}^{d}}|\nabla_{k}(u_{j}-u_{0})|^{2}w_{k}(x)dx
&=&\lim_{j\rightarrow\infty}\int\limits_{\mathbb{R}^{d}}|\nabla_{k}u_{j}|^{2}w_{k}(x)dx-\int\limits_{\mathbb{R}^{d}}|\nabla_{k}u_{0}|^{2}w_{k}(x)dx\nonumber\\
&=& S_{d,k}-\int\limits_{\mathbb{R}^{d}}|\nabla_{k}u_{0}|^{2}w_{k}(x)dx.\label{eqn5}
\end{eqnarray}
Now from Lemma \ref{5plem4}, we have 
\begin{eqnarray*}
1=\lim_{j\rightarrow\infty}\int\limits_{\mathbb{R}^{d}}|u_{j}|^{q}w_{k}(x)dx=\lim_{j\rightarrow\infty}\int\limits_{\mathbb{R}^{d}}|u_{j}-u_{0}|^{q}w_{k}(x)dx+\int\limits_{\mathbb{R}^{d}}|u_{0}|^{q}w_{k}(x)dx.
\end{eqnarray*}
As a consequence, since $\frac{2}{q}<1$,
\begin{eqnarray}\label{eqn6}
1\leq \lim_{j\rightarrow\infty}\left(\int\limits_{\mathbb{R}^{d}}|u_{j}-u_{0}|^{q}w_{k}(x)dx\right)^{\frac{2}{q}}+\left(\int\limits_{\mathbb{R}^{d}}|u_{0}|^{q}w_{k}(x)dx\right)^{\frac{2}{q}},
\end{eqnarray}
using the fact that $(a+b)^{r}\leq a^{r}+b^{r}$ for $a,b>0, 0\leq r\leq 1$.

Hence using (\ref{eqn5}) and (\ref{eqn6}), we have
\begin{eqnarray*}
S_{d,k} &\geq& \frac{\lim_{j\rightarrow\infty}\int\limits_{\mathbb{R}^{d}}|\nabla_{k}(u_{j}-u_{0})|^{2}w_{k}(x)dx+\int\limits_{\mathbb{R}^{d}}|\nabla_{k}u_{0}|^{2}w_{k}(x)dx}{\lim_{j\rightarrow\infty}\left(\int\limits_{\mathbb{R}^{d}}|u_{j}-u_{0}|^{q}w_{k}(x)dx\right)^{\frac{2}{q}}+\left(\int\limits_{\mathbb{R}^{d}}|u_{0}|^{q}w_{k}(x)dx\right)^{\frac{2}{q}}}\\
&\geq& \frac{S_{d,k}\lim_{j\rightarrow\infty}\left(\int\limits_{\mathbb{R}^{d}}|u_{j}-u_{0}|^{q}w_{k}(x)dx\right)^{\frac{2}{q}}+\int\limits_{\mathbb{R}^{d}}|\nabla_{k}u_{0}|^{2}w_{k}(x)dx}{\lim_{j\rightarrow\infty}\left(\int\limits_{\mathbb{R}^{d}}|u_{j}-u_{0}|^{q}w_{k}(x)dx\right)^{\frac{2}{q}}+\left(\int\limits_{\mathbb{R}^{d}}|u_{0}|^{q}w_{k}(x)dx\right)^{\frac{2}{q}}}.
\end{eqnarray*}

This implies that 
\begin{eqnarray*}
S_{d,k}\geq \frac{\int\limits_{\mathbb{R}^{d}}|\nabla_{k}u_{0}|^{2}w_{k}(x)dx}{\left(\int\limits_{\mathbb{R}^{d}}|u_{0}|^{q}w_{k}(x)dx\right)^{\frac{2}{q}}},
\end{eqnarray*}
from which it follows that $u_{0}$ is a minimizer. 
\end{pf}

\section{ weighted estimates for the heat semi group operator $e^{t\Delta_{k}}$}
In this section, we prove the following Proposition involving weighted estimates for the operator $e^{t\Delta_{k}}$, using which we shall show that $e^{t\Delta_{k}}$ is a compact operator. For the operator $e^{t\Delta}$, such types of estimates are found in \cite{rlu}. 

\begin{prop}\label{5ppr2}
Let $d\geq 2, 1<r<\infty$. Assume that 
\begin{eqnarray*}
0<\beta<\frac{d_{k}}{r^{\prime}}
\end{eqnarray*}
and fix $t>0$. Then 
\begin{itemize}
    \item [(i)] $\|e^{t\Delta_{k}}f\|_{L^{\infty}(\mathbb{R}^{d})}\leq C_{d,r,\beta,t,k}\||x|^{\beta}f\|_{r,w_{k}}$
    \item [(ii)] $\||x|^{w}e^{t\Delta_{k}}f\|_{L^{\infty}(\mathbb{R}^{d})}\leq D_{d,r,\beta,t,k}\||x|^{\beta}f\|_{r,w_{k}}$ for $\beta\geq w>0$.
    \item [(iii)] $\|\partial_{x_{i}}e^{t\Delta_{k}}f\|_{L^{\infty}(\mathbb{R}^{d})}\leq E_{d,r,\beta,t,k}\||x|^{\beta}f\|_{r,w_{k}}, ~~i=1,2,\dotsc,{n}$.
\end{itemize}
\end{prop}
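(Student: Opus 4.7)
The plan is to write $e^{t\Delta_k}f(x) = \int_{\mathbb{R}^d} f(y)\,h_t(x,y)\,w_k(y)\,dy$ with $h_t(x,y) = \tau_y^k q_t^k(x) = \tau_x^k q_t^k(y)$, and then attack the three estimates by Hölder's inequality applied in the form
\[
|e^{t\Delta_k}f(x)| \leq \||y|^\beta f\|_{r,w_k}\Bigl(\int_{\mathbb{R}^d}|y|^{-\beta r'} h_t(x,y)^{r'} w_k(y)\,dy\Bigr)^{1/r'}.
\]
The key inputs will be: (a) from the explicit formula (\ref{5peq4}) combined with $|E_k(x,y)|\le e^{|x||y|}$ of Proposition \ref{5ppr1}(iv), the uniform bound $h_t(x,y)\le(2t)^{-d_k/2}$ and the refined Gaussian bound $h_t(x,y)\le(2t)^{-d_k/2}e^{-(|x|-|y|)^2/(4t)}$; (b) the radial translation estimate (\ref{5peq5}) applied to $q_t^k$, giving $\|\tau_x^k q_t^k\|_{r',w_k} \le \|q_t^k\|_{r',w_k}$ uniformly in $x$; (c) property (2) of $w_k$, which under the hypothesis $\beta < d_k/r'$ ensures $\int_{|y|\le R}|y|^{-\beta r'}w_k(y)\,dy < \infty$; and (d) the Anker-type estimate (\ref{5peq54}) for derivatives of $h_t$.

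For part (i), I would split the inner Hölder integral at $|y|=1$. On $\{|y|\le 1\}$ use $h_t(x,y)^{r'}\le(2t)^{-d_k r'/2}$ together with (c) to get a finite constant. On $\{|y|>1\}$, bound $|y|^{-\beta r'}\le 1$ and estimate $\int_{|y|>1} h_t(x,y)^{r'} w_k(y)\,dy \le \|\tau_x^k q_t^k\|_{r',w_k}^{r'} \le \|q_t^k\|_{r',w_k}^{r'}$ by (b). Both contributions are uniform in $x$, proving (i).

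For part (ii), the case $|x|\le 1$ follows at once from (i) because $|x|^w\le 1$. For $|x|>1$, I would split $\int f(y)h_t(x,y)w_k(y)\,dy$ into the near region $\{|y|\le|x|/2\}$ and the far region $\{|y|>|x|/2\}$. In the near region, the reverse triangle inequality gives $(|x|-|y|)^2\ge|x|^2/4$ and hence $h_t(x,y)\le (2t)^{-d_k/2}e^{-|x|^2/(16t)}$; combining Hölder with property (2) produces a contribution bounded by $C_t|x|^{w+d_k/r'-\beta}e^{-|x|^2/(16t)}\||y|^\beta f\|_{r,w_k}$, which is uniform on $|x|\ge 1$ since Gaussian decay dominates any polynomial. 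In the far region, where $|y|\ge 1/2$, I would use $|x|^w\le 2^w|y|^w$ and, because $w\le\beta$, also $|y|^w\le 2^{\beta-w}|y|^\beta$; this absorbs $|x|^w$ into $|y|^\beta$, and then Hölder together with (b) bounds the far-region piece by $C\||y|^\beta f\|_{r,w_k}\|q_t^k\|_{r',w_k}$.

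For part (iii), the Anker et al.\ estimate (\ref{5peq54}) specialized to $m=0$, $|\alpha|=1$, $|\beta|=0$ yields $|\partial_{x_i}h_t(x,y)|\le C_k\,t^{-1/2}h_{2t}(x,y)$; substituting this for $h_t$ in the argument of part (i) gives (iii) with an extra $t^{-1/2}$ factor in the constant. The main obstacle is part (ii): the polynomial prefactor $|x|^w$ cannot be absorbed naively for large $|x|$, and it is exactly the splitting $|y|\lessgtr|x|/2$, coupled with Gaussian decay in the near region and the constraint $w\le\beta$ in the far region, that closes the estimate uniformly in $x$.
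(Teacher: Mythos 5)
Your proposal is correct and follows essentially the same route as the paper: H\"older's inequality against the dual weight $|y|^{-\beta r'}$, the Gaussian bound $\tau_y^k q_t^k(x)\le (2t)^{-d_k/2}e^{-(|x|-|y|)^2/4t}$ from (\ref{5peq4}) and Proposition \ref{5ppr1}(iv), the radial translation contraction (\ref{5peq5}), property (2) of $w_k$, the near/far splitting at $|y|=|x|/2$ for (ii), and the Anker estimate (\ref{5peq54}) for (iii). The only (immaterial) differences are cosmetic: you split at $|y|=1$ where the paper splits at $|y|=\sqrt{t}$ in (i), and in the far region of (ii) you absorb $|x|^w$ via $|y|\ge 1/2$ and $w\le\beta$ rather than the paper's $|x|^{(w-\beta)r'}\le 1$ followed by $(|x|/|y|)^{\beta r'}\le 2^{\beta r'}$.
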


\begin{pf}
Consider
\begin{eqnarray}
|e^{t\Delta_{k}}f(x)|\nonumber
&\leq& \int\limits_{\mathbb{R}^{d}}|f(y)||\tau_{y}^{k}q_{t}^{k}(x)|w_{k}(y)dy\nonumber\\
&\leq& \bigg(\int\limits_{\mathbb{R}^{d}}|f(y)|^{r}|y|^{\beta r}w_{k}(y)dy\bigg)^{\frac{1}{r}} \bigg(\int\limits_{\mathbb{R}^{d}}|\tau_{y}^{k}q_{t}^{k}(x)|^{r^{\prime}}|y|^{-\beta r^{\prime}}w_{k}(y)dy\bigg)^{\frac{1}{r^{\prime}}}\nonumber\\
&=& \||y|^{\beta}f\|_{r,w_{k}} (I_{1}(x)+I_{2}(x)^\frac{1}{r^{\prime}},\label{5peq11}
\end{eqnarray}
where 
\begin{eqnarray*}
I_{1}(x)=\int\limits_{|y|\leq \sqrt {t}}|\tau_{y}^{k}q_{t}^{k}(x)|^{r^{\prime}}|y|^{-\beta r^{\prime}}w_{k}(y)dy
\end{eqnarray*}
and 
\begin{eqnarray*}
I_{2}(x)=\int\limits_{|y|> \sqrt {t}}|\tau_{y}^{k}q_{t}^{k}(x)|^{r^{\prime}}|y|^{-\beta r^{\prime}}w_{k}(y)dy.
\end{eqnarray*}
Substituting the value of $\tau_{y}^{k}q_{t}^{k}(x)$ from (\ref{5peq4}) and then using Proposition \ref{5ppr1}, 
\begin{eqnarray}
|\tau_{y}^{k}q_{t}^{k}(x)|
&=& \left|(2t)^{-(\gamma_{k}+\frac{d}{2})}e^{-\frac{|x|^{2}+|y|^{2}}{4t}}E_{k}\big(\frac{x}{\sqrt{2t}},\frac{y}{\sqrt{2t}}\big)\right|\nonumber\\
&\leq&  (2t)^{-(\gamma_{k}+\frac{d}{2})} e^{-\frac{|x|^{2}+|y|^{2}}{4t}} e^{\frac{|x||y|}{2t}}\nonumber\\
&=&  (2t)^{-\frac{d_{k}}{2}} e^{-\frac{(|x|-|y|)^{2}}{4t}}\label{5peq28}\\
&\leq& C t^{-\frac{d_{k}}{2}}, \forall~ x,y\in \mathbb{R}^{d},\nonumber
\end{eqnarray}
where $C=2^{-\frac{d_{k}}{2}}$. 
Then
\begin{eqnarray*}
I_{1}(x)\leq C^{r^{\prime}} t^{-\frac{d_{k}}{2}r^{\prime}}\int\limits_{|y|\leq \sqrt {t}}|y|^{-\beta r^{\prime}}w_{k}(y)dy.
\end{eqnarray*}
By property (2) of section 2, the above integral is finite if $\beta< \frac{d_{k}}{r^{\prime}}$ and equals to $\frac{a_{k}(\sqrt {t})^{{d_{k}-\beta r^{\prime}}}} {d_{k}-\beta r^{\prime}}$. Thus $I_{1}(x)\leq A_{d,r,\beta,k} t^{-\frac{d_{k}}{2}r^{\prime}} t^{\frac{1}{2}(d_{k}-\beta r^{\prime})}$, where $A_{d,r,\beta,k}=\frac{a_{k}C^{r^{\prime}}}{d_{k}-\beta r^{\prime}}$. 

On the other hand, over the integral $I_{2}$, since $|y|>\sqrt {t}$,  $|y|^{-\beta r^{\prime}}\leq t^{-\frac {\beta r^{\prime}}{2}}$ and hence
\begin{eqnarray}
I_{2}(x)\leq t^{-\frac {\beta r^{\prime}}{2}}\int\limits_{|y|>\sqrt{t}}|\tau_{y}^{k}q_{t}^{k}(x)|^{r^{\prime}}w_{k}(y)dy\nonumber
&\leq& t^{-\frac {\beta r^{\prime}}{2}}\int\limits_{\mathbb{R}^{d}}|\tau_{y}^{k}q_{t}^{k}(x)|^{r^{\prime}}w_{k}(y)dy\\\nonumber
&\leq& t^{-\frac {\beta r^{\prime}}{2}}\int\limits_{\mathbb{R}^{d}} |q_{t}^{k}(y)|^{r^{\prime}} w_{k}(y)dy\\\label{5peq32}
&\leq& B_{d,r,k} t^{-\frac {\beta r^{\prime}}{2}} t^{-\frac{d_{k}}{2}(r^{\prime}-1)},
\end{eqnarray}
by taking $p=r^{\prime}$ in (\ref{5peq8}), where $B_{d,r,k}=2^{-\frac{d_{k}}{2}r^{\prime}}\big(\frac{r^{\prime}}{4}\big)^{-\frac{d_{k}}{2}}\frac{a_{k}}{2}\Gamma\big(\frac{d_{k}}{2}\big)$.
Finally from (\ref{5peq11}) using the estimates of $I_{1}$ , $I_{2}$ and the fact that $(a+b)^{u}\leq a^{u}+b^{u}$ for $a,b>0, u<1$, we obtain 
\begin{eqnarray}
|e^{t\Delta_{k}}f(x)|
&\leq &  \||y|^{\beta}f\|_{L^{r}(\mathbb{R}^{d},w_{k})}\left(A_{d,r,\beta,k} t^{-\frac{d_{k}}{2}r^{\prime}}t^{\frac{1}{2}(d_{k}-\beta r^{\prime})} + B_{d,r,k} t^{-\frac {\beta r^{\prime}}{2}} t^{-\frac{d_{k}}{2}(r^{\prime}-1)}\right)^\frac{1}{r^{\prime}}\nonumber\\
&\leq & C_{d,r,\beta,k} \||y|^{\beta} f\|_{L^{r}(\mathbb{R}^{d},w_{k})} \left(t^{-\frac{d_{k}}{2}}t^{\frac{1}{2r^{\prime}}(d_{k}-\beta r^{\prime})} + t^{-\frac{\beta}{2}} t^{-\frac{d_{k}}{2r}}\right)\label{5peqn44}\\
&\leq& C_{d,r,\beta,t,k} \||y|^{\beta} f\|_{L^{r}(\mathbb{R}^{d},w_{k})}\nonumber,
\end{eqnarray}
where $C_{d,r,\beta,k}=(\max\{{A_{d,r,\beta,k},B_{d,r,k}}\})^{\frac{1}{r^{\prime}}}$ and\\ $C_{d,r,\beta,t,k}=C_{d,r,\beta,k} \left(t^{-\frac{d_{k}}{2}}t^{\frac{1}{2r^{\prime}}(d_{k}-\beta r^{\prime})} + t^{-\frac{\beta}{2}} t^{-\frac{d_{k}}{2r}}\right)$. 
Thus we have proved that 
\begin{eqnarray}\label{5peq31}
\|e^{t\Delta_{k}}f\|_{L^{\infty}(\mathbb{R}^{d})}\leq C_{d,r,\beta,t,k}\||x|^{\beta}f\|_{L^{r}(\mathbb{R}^{d},w_{k})},
\end{eqnarray}
thus proving (i).

In particular, when $r=2$, then from (\ref{5peqn44}), we get
\begin{eqnarray}\label{5peq46}
\|e^{t\Delta_{k}}f\|_{L^{\infty}(\mathbb{R}^{d})}\leq C_{d,\beta,k} t^{-\frac{1}{2}(\frac{d_{k}}{2}+\beta)}\||x|^{\beta}f\|_{L^{2}(\mathbb{R}^{d},w_{k})},
\end{eqnarray}
where $C_{d,\beta,k}=2 C_{d,2,\beta,k}$.

Now we shall prove (ii). since $w>0$, using (\ref{5peq31}) we observe that, for $|x|\leq 1$,  
\begin{eqnarray}\label{5peq34}
||x|^{w} e^{t\Delta_{k}} f(x)|\leq |e^{t\Delta_{k}} f(x)|\leq  C_{d,r,\beta,t,k} \||x|^{\beta}f\|_{L^{r}(\mathbb{R}^{d},w_{k})}.
\end{eqnarray}
So we assume that $|x|>1$. Consider
\begin{eqnarray*}
&&||x|^{w}e^{t\Delta_{k}}f(x)|\\
&\leq& |x|^{w}\int\limits_{\mathbb{R}^{d}} |f(y)| |\tau^{k}_{y}q_{t}^{k}(x)| w_{k}(y)dy\\
&\leq& \left(\int\limits_{\mathbb{R}^{d}}|f(y)|^{r}|y|^{\beta r} w_{k}(y)dy\right)^{\frac{1}{r}} \left(\int\limits_{\mathbb{R}^{d}}|x|^{w r^{\prime}}|y|^{-\beta r^{\prime}}|\tau_{y}^{k}q_{t}^{k}(x)|^{r^{\prime}}w_{k}(y)dy\right)^{\frac{1}{r^{\prime}}}\\
&\leq & \||y|^{\beta}f\|_{r,w_{k}}\left(\int\limits_{\mathbb{R}^{d}}|x|^{\beta r^{\prime}}|y|^{-\beta r^{\prime}}|\tau_{y}^{k}q_{t}^{k}(x)|^{r^{\prime}}w_{k}(y)dy\right)^{\frac{1}{r^{\prime}}},
\end{eqnarray*}
since  $|x|>1$ and $\beta\geq w$, it implies that $|x|^{(w-\beta)r^{\prime}}\leq 1$. Then 
\begin{eqnarray}\label{5peq33}
||x|^{w}e^{t\Delta_{k}}f(x)|\leq  \||y|^{\beta}f\|_{r,w_{k}} (I_{1}(x)+I_{2}(x))^{\frac{1}{r^{\prime}}},
\end{eqnarray}
where 
$I_{1} (x)=\int\limits_{|y|\leq \frac{|x|}{2}}|x|^{\beta r^{\prime}}|y|^{-\beta r^{\prime}}|\tau_{y}^{k}q_{t}^{k}(x)|^{r^{\prime}}w_{k}(y)dy$ and \\
$I_{2} (x)=\int\limits_{|y|\geq \frac{|x|}{2}}|x|^{\beta r^{\prime}}|y|^{-\beta r^{\prime}}|\tau_{y}^{k}q_{t}^{k}(x)|^{r^{\prime}}w_{k}(y)dy $. Over the first integral $I_{1} (x)$, $(|x|-|y|)^{2}\geq\frac {|x|^{2}}{4}~\forall ~y$, since $|y|\leq \frac{|x|}{2}$.  Consequently, from (\ref{5peq28}) we arrive at the bound
$\tau_{y}^{k}q_{t}^{k}(x)\leq (2t)^{-\frac{d_{k}}{2}}e^{-\frac{|x|^{2}}{16t}}$. Then

\begin{eqnarray*}
I_{1}(x)\leq (2t)^{-\frac {d_{k}}{2} r^{\prime}}\frac {|x|^{\beta r^{\prime}}}{e^{\frac{r^{\prime}}{16t}|x|^{2}}}\int\limits_{|y|<\frac{|x|}{2}} |y|^{-\beta r^{\prime}} w_{k}(y)dy. 
\end{eqnarray*}
For $\beta<\frac{d_{k}}{r^{\prime}}$, the above integral finite and equals to $\frac{1}{d_{k}-\beta r^{\prime}} (\frac{|x|}{2})^{d_{k}-\beta r^{\prime}}$. Therefore, $I_{1}(x)\leq \frac{(2t)^{-\frac{d_{k}}{2}r^{\prime}}}{(d_{k}-\beta r^{\prime})2^{d_{k}-\beta r^{\prime}}}\frac {|x|^{d_{k}}}{e^{\frac{r^{\prime}}{16t}|x|^{2}}}$. Since $\frac {|x|^{d_{k}}}{e^{\frac{r^{\prime}}{16t}|x|^{2}}}\longrightarrow 0$ as $|x|\longrightarrow\infty$, it follows that $I_{1}(x)\leq A^{\prime}_{d,r,\beta,t,k}$ for some constant $A^{\prime}_{d,r,\beta,t,k}>0$.

Now we consider the integral $I_{2}$. As $|y|>\frac{|x|}{2}, \big(\frac{|x|}{|y|}\big)^{\beta r^{\prime}}< 2^{\beta r^{\prime}}$ for $\beta,r^{\prime}>0$. 
\begin{eqnarray*}
I_{2}(x)\leq 2^{\beta r^{\prime}}\int\limits_{|y|>\frac{|x|}{2}} |\tau_{y}^{k}q_{t}^{k}(x)|^{r^{\prime}}w_{k}(y)dy
&\leq& 2^{\beta r^{\prime}} \int\limits_{\mathbb{R}^{d}} |q_{t}^{k}(y)|^{r^{\prime}}w_{k}(y)dy\\
&\leq& 2^{\beta r^{\prime}} B_{d,r,k} t^{-\frac{d_{k}}{2}(r^{\prime}-1)}=B^{\prime}_{d,r,\beta,t,k},
\end{eqnarray*}
using (\ref{5peq32}). Hence for $|x|>1$, from (\ref{5peq33}) , we get a constant $C^{\prime}_{d_{k},r^{\prime},t}>0$ such that 
\begin{eqnarray}\label{5peq35}
||x|^{w}e^{t\Delta_{k}}f(x)|\leq C^{\prime}_{d,r,\beta,t,k} \||y|^{\beta}u\|_{r,w_{k}}.
\end{eqnarray}
Consequently, combining (\ref{5peq34}) and (\ref{5peq35}), we get a constant $D_{d,r,\beta,t,k}>0$ such that 
\begin{eqnarray*}
 \||x|^{w}e^{t\Delta_{k}}f\|_{L^{\infty}(\mathbb{R}^{d})}\leq D_{d,r,\beta,t,k} \||x|^{\beta}f\|_{r,w_{k}},
 \end{eqnarray*}
 thus proving (ii). 
 
 Next we shall prove (iii). Consider 
 \begin{eqnarray*}
 \left|\frac{\partial}{\partial x_{i}}(e^{t\Delta_{k}}f)(x)\right|
 &=&\left|\frac{\partial}{\partial x_{i}}(f*_{k}q_{t}^{k})(x)\right|\\
 &=& \left|\frac{\partial}{\partial x_{i}}\int\limits_{\mathbb{R}^{d}}f(y)\tau^{k}_{y}q_{t}^{k}(x)w_{k}(y)dy\right|\\
 &=& \left|\int\limits_{\mathbb{R}^{d}}f(y)\frac{\partial}{\partial x_{i}}\tau^{k}_{y}q_{t}^{k}(x)w_{k}(y)dy\right|\\
 &\leq& \int\limits_{\mathbb{R}^{d}} |f(y)|\left|\frac{\partial}{\partial x_{i}}\tau^{k}_{y}q_{t}^{k}(x)\right|w_{k}(y)dy\\
 &\leq& C t^{-\frac{1}{2}}\int\limits_{\mathbb{R}^{d}} |f(y)||\tau^{k}_{y}q_{2t}^{k}(x)| w_{k}(y) dy,
 \end{eqnarray*}
 by taking $m=0,\alpha=e_{i},\beta=0$ in (\ref{5peq54}). Now using (i), there exists constant $E_{d,r,\beta,t,k}>0$ such that
 \begin{eqnarray*}
 \|\partial_{x_{i}}e^{t\Delta_{k}}f\|_{L^{\infty}(\mathbb{R}^{d})}\leq E_{d,r,\beta,t,k}\||x|^{\beta}f\|_{r,w_{k}},
 \end{eqnarray*}
 thus proving (iii).     
\end{pf}

Using Proposition \ref{5ppr2}, we can prove the following theorem. 
\begin{thm}\label{5pth8}
Let $d\geq 2, 1< r<\infty$. If $0<\beta<\frac{d_{k}}{r^{\prime}}$, then for any fixed $t>0$, the operator $e^{t\Delta_{k}}$ is a compact operator from $L^{r}(\mathbb{R}^{d},|x|^{\beta r}w_{k})$ to $L^{\infty}(\mathbb{R}^{d})$. 
\end{thm}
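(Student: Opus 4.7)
My plan is to prove compactness of $e^{t\Delta_{k}}:L^{r}(\mathbb{R}^{d},|x|^{\beta r}w_{k})\to L^{\infty}(\mathbb{R}^{d})$ by the Arzel\`a--Ascoli theorem combined with a uniform decay-at-infinity estimate, using the three parts of Proposition \ref{5ppr2} as a neatly packaged input. Let $(f_{n})$ be a bounded sequence with $\||x|^{\beta}f_{n}\|_{r,w_{k}}\leq M$, and set $g_{n}:=e^{t\Delta_{k}}f_{n}$. The goal is to extract from $(g_{n})$ an $L^{\infty}(\mathbb{R}^{d})$-convergent subsequence. Proposition \ref{5ppr2}(i) gives the uniform bound $\|g_{n}\|_{L^{\infty}(\mathbb{R}^{d})}\leq C_{d,r,\beta,t,k}M$, while Proposition \ref{5ppr2}(iii) gives $\|\partial_{x_{i}}g_{n}\|_{L^{\infty}(\mathbb{R}^{d})}\leq E_{d,r,\beta,t,k}M$ for each $i=1,\dotsc,d$, so by the mean value theorem the family $(g_{n})$ is equi-Lipschitz, hence equicontinuous on every ball.

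Next, applying the classical Arzel\`a--Ascoli theorem on each closed ball $\overline{B(0,N)}$ and extracting a diagonal subsequence over $N\in\mathbb{N}$, I would obtain a subsequence (still denoted $(g_{n})$) converging uniformly on every compact subset of $\mathbb{R}^{d}$ to some continuous function $g$. To upgrade this compact-open convergence to $L^{\infty}(\mathbb{R}^{d})$-convergence I need uniform vanishing at infinity, and this is supplied by Proposition \ref{5ppr2}(ii) applied with the choice $w=\beta>0$, which is legitimate since $\beta$ satisfies $0<\beta<d_{k}/r^{\prime}$. That estimate yields
\begin{eqnarray*}
|g_{n}(x)|\leq D_{d,r,\beta,t,k}M\,|x|^{-\beta}\longrightarrow 0\quad\text{as }|x|\to\infty,
\end{eqnarray*}
uniformly in $n$, and by pointwise passage to the limit the same decay is inherited by $g$.

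Given $\varepsilon>0$, I would then choose $R$ so large that $|g_{n}(x)|$ and $|g(x)|$ are both less than $\varepsilon/2$ on $\{|x|\geq R\}$ uniformly in $n$, and use the uniform convergence on $\overline{B(0,R)}$ to bound $\|g_{n}-g\|_{L^{\infty}(B(0,R))}<\varepsilon$ for $n$ large. Combining these estimates shows $g_{n}\to g$ in $L^{\infty}(\mathbb{R}^{d})$, proving compactness.

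I do not expect a genuine obstacle: the analytic content has already been placed in Proposition \ref{5ppr2}, and the argument is just the standard pattern \emph{uniformly bounded $+$ equicontinuous $+$ uniform smallness at infinity $\Longrightarrow$ precompact in $C_{0}(\mathbb{R}^{d})$}. The only subtle point is recognizing that the strict positivity $w=\beta>0$ in the hypothesis is exactly what is needed for part (ii) to deliver genuine tail decay; without this positivity, the compactness would collapse to mere local compactness.
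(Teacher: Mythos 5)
Your argument is correct and is essentially the paper's own proof: equiboundedness from part (i), equicontinuity from the gradient bound in part (iii), Arzel\`a--Ascoli plus a diagonal extraction on balls, and uniform tail decay from part (ii) to upgrade local uniform convergence to convergence in $L^{\infty}(\mathbb{R}^{d})$. The only (immaterial) difference is that you take $w=\beta$ in part (ii) while the paper uses an auxiliary exponent $0<z<\beta$; both choices are admissible and yield the same uniform smallness at infinity.
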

\begin{proof}
Let $(u_j)_{j\in \mathbb{N}} \in L^r(\mathbb{R}^d,|x|^{\beta r}w_k)$ be a bounded sequence so that $\||x|^\beta u_j\|_{r,w_k}\leq C_0$ for all $j\in \mathbb{N}$. We will prove that the sequence $(e^{t\Delta_k}u_j)_{j\in \mathbb{N}}$ has convergent subsequence in $L^{\infty}(\mathbb{R}^{d})$ and that will imply the theorem. Let $v_j=e^{t\Delta_k}u_j$. Since $(u_j)_{j\in \mathbb{N}}$ is a bounded sequence, using Proposition \ref{5ppr1} (i), we have
\begin{align*}\label{eqn 1}
  \|v_j\|_{L^\infty(\mathbb{R}^d)}=  \|e^{t\Delta_k}u_j\|_{L^\infty(\mathbb{R}^d)}\leq C\||x|^\beta u_j\|_{r,w_k}\leq C_0.
\end{align*}
This proves that each $v_{j}\in L^{\infty}(\mathbb{R}^{d})$ and the collection $(v_j)_j$ is equibounded in $\mathbb{R}^{d}$. 
Moreover, Proposition \ref{5ppr1} (iii) shows that $(v_j)_j$ is also equicontinuos in $\mathbb{R}^{d}$. Now for each $n \in \mathbb{N}$, we define the compact set $A_n:=\{x\in \mathbb{R}^d:|x|\leq n\}$. Then by the Arzel\'a-Ascoli theorem for each $n\in\mathbb{N}$, there exists a subsequence of $(v_j)_j$ which converges uniformly in $A_n$. Now by applying diagonal argument, we get a subsequence of $(v_j)_j$ which converges uniformly in every $A_n$. let us call this subsequence also by $(v_j)_j$ and we can write $v_j \to v$ uniformly in each $A_n$ for some $v \in L^{\infty}(\mathbb{R}^{d})$. 

Now let $z$ be such that $0<z <\beta$. By Proposition \ref{5ppr1}(ii), we can write
\begin{eqnarray}\label{eqn 2}
    \||x|^z v_j\|_{L^\infty(\mathbb{R}^d)}=\|| x|^z e^{t\Delta_k}u_j\|_{L^\infty(\mathbb{R}^d)}\leq C_1\||x|^\beta u_j\|_{r,w_k}.
\end{eqnarray}
Now by using (\ref{eqn 2}) we get
\begin{align*}
    \sup_{|x|>n}|v_j|\leq \sup_{|x|>n}\Big(\frac{|x|}{n}\Big)^z|v_j|\leq n^{-z}\||x|^z v_j\|_{L^\infty(\mathbb{R}^d)}\leq C_1n^{-z}.
\end{align*}
Thus we can easily see that $v_j \to v$ strongly in $L^\infty(\mathbb{R}^d)$ and this proves the theorem.
\end{proof}

\section{Improved Stein-Weiss inequality for the D-Riesz potential} \label{5pth5}
In this section, we will be focusing on deriving an improved version of the Stein-Weiss inequality for the D-Riesz potential. In other words, we are interested to generalize Theorem\ref{5pth1}. Towards this, for any $\delta >0$, first we define the Dunkl Besov space as
\begin{equation*}
 \dot{B}_{\infty,\infty}^{-\delta,k}:=\{f:f ~ \text{is a tempered distribution on $\mathbb{R}^{d}$ and}~ \|f\|_{\dot{B}_{\infty,\infty}^{-\delta,k}}<\infty \},
\end{equation*}
where 
\begin{equation}\label{Besov norm}
    \|f\|_{\dot{B}_{\infty,\infty}^{-\delta,k}}:= \sup_{t>0}t^{\delta/2}\|e^{t\Delta_k}f\|_{L^{\infty}}.
\end{equation}
\begin{thm}\label{5pth6}
Let $d\geq2$ and $0<\alpha<d_k$. Also let $\beta$, $\gamma$, $\mu$, $\theta$, $r$ and $s$ be such that  $1<r< s<\infty$, $\gamma >-\frac{d_k}{s}$, $\beta <\frac{d_k}{r'}$, $\beta \geq\frac{\gamma}{\theta}$. Also it satisfy that $\mu>0$, $\max \{\frac{r}{s}, \frac{\mu}{\mu+\alpha}\}<\theta \leq 1 $ and
\begin{eqnarray}\label{5peq43}
\frac{d_k}{s}+\gamma=\Big(\beta +\frac{d_k}{r}-\alpha\Big)\theta+\mu(1-\theta). 
\end{eqnarray}
Then for all $f\in L^r(\mathbb{R}^d,|x|^{\beta r}w_k)\cap \dot{B}_{\infty, \infty}^{-\mu -\alpha,k}$,the following inequality holds:
\begin{equation}\label{5peq44}
    \||x|^\gamma I_{\alpha}^{k} f\|_{L^s(\mathbb{R}^d,w_k)}\leq C_{k}\|x|^\beta f \|_{L^r(\mathbb{R}^d,w_k)}^\theta \|f\|_{\dot{B}_{\infty,\infty}^{-\mu -\alpha,k}}^{1-\theta}.
\end{equation}
\end{thm}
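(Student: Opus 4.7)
The strategy, adapted from \cite{pd}, is to derive (\ref{5peq44}) by combining the Dunkl Stein-Weiss inequality (Theorem \ref{5pth1}) at suitably rescaled exponents with a weighted refined Sobolev-type interpolation. The starting point is the heat-semigroup representation of the D-Riesz potential,
\begin{eqnarray*}
I_\alpha^k f(x) = \frac{1}{\Gamma(\alpha/2)}\int_0^\infty t^{\alpha/2-1}e^{t\Delta_k}f(x)\,dt,
\end{eqnarray*}
which follows from the scalar identity $|\xi|^{-\alpha}=\Gamma(\alpha/2)^{-1}\int_0^\infty t^{\alpha/2-1}e^{-t|\xi|^2}dt$, the Plancherel identity (\ref{5peq1}), and the fact that $\mathcal{F}_k(I_\alpha^k f)(\xi)=|\xi|^{-\alpha}\mathcal{F}_k f(\xi)$.

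An immediate consequence is the Besov-to-Besov mapping property $\|I_\alpha^k f\|_{\dot{B}_{\infty,\infty}^{-\mu,k}}\leq C\|f\|_{\dot{B}_{\infty,\infty}^{-\mu-\alpha,k}}$, obtained by estimating $\|e^{t\Delta_k}I_\alpha^k f\|_\infty$ through the convergent Beta integral $\int_0^\infty s^{\alpha/2-1}(t+s)^{-(\mu+\alpha)/2}ds$ (both exponents are positive). Next, I apply Theorem \ref{5pth1} with the modified exponents $\tilde{s}:=s/\theta$ and $\tilde{\gamma}:=\beta-\alpha+d_k/r-d_k\theta/s$, keeping $r,\beta$ unchanged; the Stein-Weiss scaling condition $\alpha+\tilde{\gamma}-\beta=d_k(1/r-1/\tilde{s})$ holds by construction, and the admissibility hypotheses $\theta>r/s$ and $\beta\geq\gamma/\theta$ translate, after using (\ref{5peq43}), into $\tilde{s}>r$ and $\beta\geq\tilde{\gamma}$. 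This yields
\begin{eqnarray*}
\||x|^{\tilde{\gamma}}I_\alpha^k f\|_{L^{\tilde{s}}(\mathbb{R}^d,w_k)}\leq C\,\||x|^\beta f\|_{L^r(\mathbb{R}^d,w_k)}.
\end{eqnarray*}

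The final and most delicate step is a weighted Dunkl refined Sobolev interpolation for a general function $g$:
\begin{eqnarray*}
\||x|^\gamma g\|_{L^s(\mathbb{R}^d,w_k)}\leq C\,\||x|^{\tilde{\gamma}}g\|_{L^{\tilde{s}}(\mathbb{R}^d,w_k)}^{\theta}\,\|g\|_{\dot{B}_{\infty,\infty}^{-\mu,k}}^{1-\theta},
\end{eqnarray*}
which I then apply to $g=I_\alpha^k f$. The proof of this interpolation will follow the scheme of Theorem \ref{5pth4}: truncate $g$ at heat-time $T$ via $g=(g-e^{T\Delta_k}g)+e^{T\Delta_k}g$, control $e^{T\Delta_k}g$ pointwise through the Besov definition (\ref{Besov norm}), bound the complementary piece in weighted $L^{\tilde s}$ by the analogue of Lemma \ref{5plem3} for the two-weight setting $(|x|^\gamma,|x|^{\tilde\gamma})$ together with the weighted heat-semigroup estimate from Proposition \ref{5ppr2}, and then optimize $T$ level-by-level through a layer-cake representation. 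The scaling identity (\ref{5peq43}) is exactly what reconciles the two factors in the resulting $\lambda$-integral.

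\textbf{Main obstacle.} The technical core is establishing the weighted refined Sobolev interpolation for general $g$ at the required parameters: the Pseudo-Poincar\'e inequality and the $L^{\tilde s}\to L^\infty$ estimate for $e^{T\Delta_k}$ must be adapted to handle the two different weights $|x|^\gamma$ and $|x|^{\tilde\gamma}$ simultaneously. I expect that the lower bound $\theta>\mu/(\mu+\alpha)$ is precisely what guarantees convergence of the $\lambda$-integral in the layer-cake step, while $\theta>r/s$ together with $\beta\geq\gamma/\theta$ is what makes the intermediate application of Theorem \ref{5pth1} admissible; in particular, the compatibility of all exponents is encoded in (\ref{5peq43}), which reduces to the classical Stein-Weiss condition at $\theta=1$.
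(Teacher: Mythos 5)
Your reduction to Theorem \ref{5pth1} is not admissible as stated. You keep the full order $\alpha$ and move to the exponents $\tilde s=s/\theta$, $\tilde\gamma=\beta-\alpha+\frac{d_k}{r}-\frac{d_k\theta}{s}$; but Theorem \ref{5pth1} also requires $\beta\geq\tilde\gamma$, which with your choice is equivalent to $\alpha\geq d_k\big(\frac{1}{r}-\frac{\theta}{s}\big)$, and this does \emph{not} follow from the hypotheses $\beta\geq\gamma/\theta$, $\theta>\max\{\frac{r}{s},\frac{\mu}{\mu+\alpha}\}$ and (\ref{5peq43}). For instance take $d_k=10$, $r=2$, $s=4$, $\theta=0.8$, $\mu=0.5$, $\beta=1$, $\gamma=0.5$; then (\ref{5peq43}) forces $\alpha=2.375$ and every hypothesis of the theorem holds, yet $\tilde\gamma=1.625>\beta$, so the Stein--Weiss inequality at $(\tilde s,\tilde\gamma)$ is not available. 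The paper's proof avoids exactly this obstruction by \emph{lowering the order}: splitting the subordination integral at a time $T$ and optimizing $T$ pointwise in $x$ gives $|I_\alpha^k f(x)|\leq C\,(I_{\alpha-2\epsilon}^k f(x))^\theta\|f\|_{\dot B_{\infty,\infty}^{-\mu-\alpha,k}}^{1-\theta}$ with $2\epsilon=\mu(\tfrac1\theta-1)$ (here $\theta>\frac{\mu}{\mu+\alpha}$ guarantees $\alpha-2\epsilon>0$), and then Theorem \ref{5pth1} is applied to $I_{\alpha-2\epsilon}^k$ with $s'=s\theta$, $\gamma'=\gamma/\theta$, for which $\beta\geq\gamma'$ is precisely the hypothesis $\beta\geq\gamma/\theta$ and $s'>r$ is precisely $\theta>r/s$.

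The second and larger gap is the step you yourself flag as the main obstacle: the two-weight interpolation $\||x|^\gamma g\|_{L^s}\leq C\||x|^{\tilde\gamma}g\|_{L^{\tilde s}}^\theta\|g\|_{\dot B_{\infty,\infty}^{-\mu,k}}^{1-\theta}$ is the whole content of the improvement, and it is only announced, not proved. (Your preliminary facts, the subordination formula and $\|I_\alpha^k f\|_{\dot B_{\infty,\infty}^{-\mu,k}}\lesssim\|f\|_{\dot B_{\infty,\infty}^{-\mu-\alpha,k}}$, are fine but do not touch this.) The sketched adaptation of Theorem \ref{5pth4} does not go through: choosing $T_\tau$ by $\tau/2=T^{-\mu/2}\|g\|_{\dot B_{\infty,\infty}^{-\mu,k}}$ no longer empties the level set $\{|x|^\gamma|e^{T\Delta_k}g|>\tau/2\}$, since the $L^\infty$ bound controls $|g|$ but not $|x|^\gamma|g|$ (the weight is unbounded for $\gamma>0$ and singular at the origin for $\gamma<0$); moreover the Chebyshev estimate for the remaining piece lives in the measure $|x|^{\tilde\gamma\tilde s}w_k\,dx$, not $|x|^{\gamma s}w_k\,dx$, and yields a factor $\tau^{-\tilde s}$ whose integral against $\tau^{s-1}d\tau$ diverges as $\tau\to0$ because $s<\tilde s$ (in the refined Sobolev inequality of Theorem \ref{5pth4} the left exponent is the \emph{larger} one, which is what makes the layer-cake bookkeeping close). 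So the proposal, as written, has a genuine hole exactly where the Besov factor must be produced; the paper's pointwise optimization of $T(x)$ sidesteps both issues at once.
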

\begin{proof}
The case $\theta =1$ reduces to Theorem \ref{5pth1}. So we will prove the theorem for $\theta <1$.
For $f \in L^r(\mathbb{R}^d,|x|^{\beta r}w_k)$, let $u=I_{\alpha}^{k} f$. Then $u$ has an integral representation of the following form:
\begin{equation*}
    u=\frac{1}{\Gamma(\alpha/2)}\int_0^\infty t^{\alpha/2-1}e^{t\Delta_k}fdt.
\end{equation*}
Now, we can write $u=H_{k}f+L_{k}f$, where
\begin{equation*}
    H_{k}f:=\frac{1}{\Gamma(\alpha/2)}\int_0^Tt^{\alpha/2-1}e^{t\Delta_k}fdt ~~ \text{and} ~~ L_{k}f :=\frac{1}{\Gamma(\alpha/2)}\int_T^\infty t^{\alpha/2-1}e^{t\Delta_k}fdt,
\end{equation*}
for some $T>0$ to be chosen later. 

Our aim is to find a bound for $u$. To achieve this we will look for the bounds for $L_{k}f$ and $H_{k}f$ separately.
Using the definition of Besov norm in (\ref{Besov norm}), we have 
\begin{eqnarray}\label{5peq75}
|L_{k}f(x)|\leq C_{k}T^{-\mu/2}\|f\|_{\dot{B}_{\infty,\infty}^{-\mu-\alpha,k}}.
\end{eqnarray}
Now we will find the bound for $H_{k}f$.

Let 
\begin{equation*}
    \Phi_{\alpha,T}^k(x)=\frac{1}{\Gamma(\frac{\alpha}{2})}\int_0^Tt^{\frac{\alpha}{2}-1}q_t^k(x)dt.
\end{equation*}
It can be easily verified that $H_{k}f=\Phi_{\alpha,T}^k*_kf$. Fix $\epsilon=\frac{\mu/\theta-\mu}{2}>0$. We note that since $\theta >\frac{\mu}{\mu+\alpha},~\alpha-2\epsilon>0$.

Since for a given $u>0$, there exists a constant $C>0$ such that for any non-zero real $x$, $e^{-|x|}\leq \frac{C}{|x|^{u}}$ holds, we can write by taking
$u=(d_k-\alpha)/2+\epsilon>0$, 
\begin{align*}
     \Phi_{\alpha,T}^k(x)
     = \frac{1}{\Gamma(\frac{\alpha}{2})}\int_0^Tt^{\frac{\alpha}{2}-1}(2t)^{-\frac{d_{k}}{2}} e^{-\frac{|x|^{2}}{4t}}dt
     &\leq C\int_{0}^Tt^{(\alpha-d_k)/2-1}\bigg(\frac{4t}{|x|^2}\bigg)^{(d_k-\alpha)/2+\epsilon}dt\\
     &\leq C_{k}\frac{1}{|x|^{d_k-\alpha+2\epsilon}}\int_0^Tt^{-1+\epsilon}dt\\
     &= C_{k} \frac{T^\epsilon}{|x|^{d_k-\alpha+2\epsilon}}.
\end{align*}
Since Dunkl translation is linear and  positivity-preserving for radial functions we can write $ \tau_y^k( \Phi_{\alpha,T}^k)(x) \leq T^\epsilon \tau_y^k(|.|^{-(d_k-\alpha+2\epsilon)})(x)$. So we obtain $H_{k}f(x)\leq C_{k}T^\epsilon I_{\alpha-2\epsilon}^kf(x)$. Choose $T$ such that $T^{-\mu/2}\|f\|_{\dot{B}_{\infty,\infty}^{-\mu-\alpha,k}} =T^\epsilon I_{\alpha-2\epsilon}^kf(x)$, which implies that
\begin{equation*}
    T=\bigg(\frac{\|f\|_{\dot{B}_{\infty,\infty}^{-\mu-\alpha,k}}}{I^k_{\alpha-2\epsilon }f(x)}\bigg)^{1/(\epsilon+\mu/2)}.
\end{equation*}
By substituting the value of T in (\ref{5peq75}), we arrive at the pointwise bound
\begin{align*}
     |u(x)|=| I_{\alpha}^{k} f(x)| &\leq C_{k} I^{k}_{\alpha-2\epsilon}f(x)^\theta \|f\|^{1-\theta}_{\dot{B}_{\infty,\infty}^{-\mu-\alpha,k}}.
\end{align*}
Hence
\begin{equation}\label{Stein-Weiss Gorbachev 1}
    \||x|^\gamma I_{\alpha}^{k} f\|_{L^s(\mathbb{R}^d,w_k)}  \leq C_{k}\||x|^{\gamma/\theta}I^{k}_{\alpha-2\epsilon}f\|^\theta_{L^{s\theta}(\mathbb{R}^d,w_k)}\|f\|^{1-\theta}_{\dot{B}_{\infty,\infty}^{-\mu-\alpha,k}}.
\end{equation}
If we assume that $\alpha^{\prime}=\alpha-2\epsilon, \gamma^{\prime}=\frac{\gamma}{\theta},s^{\prime}=s\theta$, then it is easy to see by the hypothesis of the theorem and the choice of $\epsilon$ that $\gamma^{\prime}>-\frac{d_{k}}{s^{\prime}},\beta\geq\gamma^{\prime},0<\alpha^{\prime}<d_{k}, r< s^{\prime}, \beta < \frac{d_k}{r'}$ and $\alpha^{\prime}+\gamma^{\prime}-\beta=d_{k}(\frac{1}{r}-\frac{1}{s^{\prime}})$.
Hence by using Theorem \ref{5pth1}, we get 
\begin{equation}\label{Stein-Weiss Gorbachev 2}
    \||x|^{\gamma/\theta}I^{k}_{\alpha-2\epsilon}f\|_{L^{s\theta}(\mathbb{R}^d,w_k)} \leq C_{k}^{\prime}\||x|^\beta f\|_{L^r(\mathbb{R}^d,w_k)}.
\end{equation}
Now from (\ref{Stein-Weiss Gorbachev 1}) and (\ref{Stein-Weiss Gorbachev 2}), we get the desired inequality

\begin{equation*}
     \||x|^\gamma I_{\alpha}^{k} f\|_{L^s(\mathbb{R}^d,w_k)}\leq D_{k}\||x|^\beta f\|^\theta_{L^r(\mathbb{R}^d, w_k)}\|f\|^{1-\theta}_{\dot{B}_{\infty,\infty}^{-\mu-\alpha,k}}.
\end{equation*}
\end{proof}

\begin{rem}
One can prove Theorem \ref{5pth6} for the case $\theta=\frac{\mu}{\mu+\alpha}$ if the weighted $L^{r}$-boundedness of the maximal function $M_{k}$ is known for $1<r<\infty$. We recall that for $f\in\mathcal {S} (\mathbb{R}^{d})$, S. Thangavelu and Y. Xu \cite{sy},  defined the maximal function $M_{k}$ as follows: 
\begin{eqnarray*}
M_{k}f(x)=\sup_{r>0} \frac{\bigg|\int\limits_{\mathbb{R}^{d}} f(y)\tau_{x}^{k}\chi_{B_{r}}(y)w_{k}(y)dy\bigg|}{\int\limits_{B_{r}}w_{k}(y)dy},
\end{eqnarray*}
where $B_{r}=\{y\in\mathbb{R}^{d}:|y|\leq r\}$. When $\theta=\frac{\mu}{\mu+\alpha}$, following the proof of \cite {pd} and the fact that $|H_{k}f(x)|\leq C_{k}T^{\frac{\alpha}{2}}M_{k}f(x)$, one can show that
\begin{eqnarray*}
\||x|^\gamma I_{\alpha}^{k} f\|_{L^{s}(\mathbb{R}^{d},w_{k})}
&\leq C_{k}\||x|^{\beta} M_{k}f\|^\theta_{L^{r}(\mathbb{R}^{d},w_{k})}\|f\|^{1-\theta}_{\dot{B}_{\infty,\infty}^{-\mu-\alpha,k}}.
\end{eqnarray*}
Now in order to obtain (\ref{5peq44}), one need to prove
\begin{eqnarray}\label{5peq69}
\||x|^{\beta} M_{k}f\|_{L^{r}(\mathbb{R}^{d},w_{k})}\leq \||x|^{\beta} f\|_{L^{r}(\mathbb{R}^{d},w_{k})},
\end{eqnarray}
which is not known to be true in general. For $\beta=0$, (\ref{5peq69}) has been proved by S. Thangavelu and Y. Xu in \cite{sy}.
\end{rem}

\section {Existence of an extremal of Stein-Weiss inequality for the D-Riesz potential}
The aim of this section is to prove the existence of a maximizer for the inequality (\ref{5peq14}) stated in Theorem \ref{5pth1}. When $k\equiv 0$ (that is for Theorem \ref{5pnth2}), the existence of a maximizer is proved by P. D. Napoli et al. in \cite {pd}. Towards this, we first prove the following embedding theorem. 
\begin{thm}\label{5pth2}
Let $d\in \mathbb{N}, 1< r\leq s<\infty,\gamma >-\frac{d_{k}}{s},\beta\geq \gamma,0<\alpha<d_{k}, \beta<\frac{d_{k}}{r^{\prime}}$. Further we assume that 
\begin{eqnarray}\label{5peq12}
\beta+\frac{d_{k}}{r}>\alpha>\frac{d_{k}}{r}-\frac{d_{k}}{s}+\beta-\gamma>0.
\end{eqnarray}
Then if $\mathcal{K}\subset\mathbb{R}^{d}$ is compact, then one has the compact embedding
\begin{eqnarray}\label{5peq16}
\dot{H}_{\beta,k}^{\alpha,r}(\mathbb{R}^{d})\subset L^{s}(\mathcal{K},|x|^{\gamma s}w_{k}),
\end{eqnarray}
where the space $\dot{H}_{\beta,k}^{\alpha,r}(\mathbb{R}^{d})$ is defined in (\ref{5peq40}). 
\end{thm}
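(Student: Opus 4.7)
Let $(u_j) = (I_\alpha^k f_j)$ be a bounded sequence in $\dot{H}_{\beta,k}^{\alpha,r}(\mathbb{R}^d)$, i.e.\ $\|f_j\|_{L^r(|x|^{\beta r}w_k)} \leq M$. Since $L^r(|x|^{\beta r}w_k)$ is reflexive, I pass to a subsequence $f_j \rightharpoonup f$ and set $u := I_\alpha^k f$, $g_j := f_j - f$, so $g_j \rightharpoonup 0$. The claim reduces to $\||x|^\gamma I_\alpha^k g_j\|_{L^s(\mathcal{K},w_k)} \to 0$. Using the subordination formula $I_\alpha^k = \Gamma(\alpha/2)^{-1}\int_0^\infty t^{\alpha/2-1} e^{t\Delta_k}\,dt$, I split into three ranges of $t$,
\[
I_\alpha^k = H_k^{0,\delta} + H_k^{\delta,T} + L_k^T,
\]
with $0 < \delta < T$ to be chosen, corresponding to $t\in(0,\delta)$, $[\delta,T]$, $(T,\infty)$.

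The tail $L_k^T$ is controlled uniformly by Proposition \ref{5ppr2}(i): integrating the bound $\|e^{t\Delta_k}g\|_\infty \leq C t^{-\beta/2 - d_k/(2r)}\|g\|_{L^r(|x|^{\beta r}w_k)}$ over $t\geq T$, with the exponent $\alpha/2 - 1 - \beta/2 - d_k/(2r) < -1$ ensured by $\alpha<\beta+d_k/r$ in (\ref{5peq12}), gives $\|L_k^T g\|_\infty \leq C T^{-(\beta+d_k/r-\alpha)/2}\|g\|_{L^r(|x|^{\beta r}w_k)}$; since $\gamma>-d_k/s$ and $\mathcal{K}$ is compact, $\||x|^\gamma\|_{L^s(\mathcal{K},w_k)}$ is finite and $\|L_k^T g_j\|_{L^s(\mathcal{K},|x|^{\gamma s}w_k)}$ is arbitrarily small, uniformly in $j$, for large $T$. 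For $H_k^{0,\delta}$ I import the pointwise device from the proof of Theorem \ref{5pth6}: set $\epsilon := \tfrac{1}{2}\bigl(\alpha+\gamma-\beta - d_k(1/r-1/s)\bigr)$, which is strictly positive by the left inequality of (\ref{5peq12}); using $e^{-s}\leq C s^{-((d_k-\alpha)/2+\epsilon)}$ inside $\Phi^k_{\alpha,\delta}$ and the positivity of the Dunkl translation of radial functions yields
\[
|H_k^{0,\delta} g(x)| \leq C\,\delta^{\epsilon}\, I_{\alpha-2\epsilon}^{k}|g|(x).
\]
By construction $(\alpha-2\epsilon)+\gamma-\beta = d_k(1/r-1/s)$, and $\alpha-2\epsilon>0$ follows from $d_k/r - d_k/s + \beta - \gamma > 0$ in (\ref{5peq12}); all remaining hypotheses of Theorem \ref{5pth1} are inherited. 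Applying that theorem gives $\||x|^\gamma H_k^{0,\delta} g\|_{L^s(\mathbb{R}^d,w_k)} \leq C\delta^{\epsilon}\|g\|_{L^r(|x|^{\beta r}w_k)}$, again uniformly small in $g$ as $\delta\to 0$.

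The crux is the compactness of the middle operator $H_k^{\delta,T}$ from $L^r(|x|^{\beta r}w_k)$ into $L^s(\mathcal{K},|x|^{\gamma s}w_k)$. Theorem \ref{5pth8} makes each $e^{t\Delta_k}$ compact into $L^\infty$, which embeds boundedly into $L^s(\mathcal{K},|x|^{\gamma s}w_k)$ because $\mathcal{K}$ is compact and $\gamma>-d_k/s$. From the heat-kernel derivative estimate $|\partial_t h_t(x,y)| \leq Ct^{-1}h_{2t}(x,y)$ in (\ref{5peq54}) from \cite{jpa}, combined with Proposition \ref{5ppr2}(i), I obtain the operator bound $\|\partial_t e^{t\Delta_k}\|_{L^r(|x|^{\beta r}w_k)\to L^\infty} \leq Ct^{-1-\beta/2-d_k/(2r)}$, so $t\mapsto t^{\alpha/2-1}e^{t\Delta_k}$ is operator-norm continuous on $[\delta,T]$. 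Consequently $H_k^{\delta,T}$ is the operator-norm limit of its Riemann sums, each a finite combination of compact operators, and is itself compact. Given $\varepsilon>0$, I pick $T$ large and then $\delta$ small so that the tail and small-$t$ contributions are each $<\varepsilon$, after which compactness of $H_k^{\delta,T}$ applied to the weakly null sequence $g_j$ makes the middle piece $<\varepsilon$ for large $j$; this yields $\||x|^\gamma I_\alpha^k g_j\|_{L^s(\mathcal{K},w_k)} \to 0$ and proves the compact embedding. The principal technical obstacle is the operator-norm continuity of $t\mapsto e^{t\Delta_k}$ on compact $t$-intervals, which rests on the delicate heat-kernel derivative estimates imported from \cite{jpa}; the rest is a careful three-scale decomposition built directly on the ingredients of Sections 5 and 6.
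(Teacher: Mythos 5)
Your route is genuinely different from the paper's. The paper proves continuity of the embedding by H\"older's inequality together with Theorem \ref{5pth1} applied at auxiliary exponents $\tilde{s}=vs$, $\tilde{\gamma}=\gamma/v$, and proves compactness by truncating the Riesz kernel: Lemma \ref{5plem1} bounds $\|(K_{\alpha,k}-K_{\alpha,k}^{t})*_{k}f\,|x|^{\gamma}\|_{s,w_k}$ by $Ct^{\delta}\||x|^{\beta}f\|_{r,w_k}$, and for the truncated kernel one shows $\tau_{y}^{k}K_{\alpha,k}^{t}\in L^{r^{\prime}}(\mathbb{R}^{d},|x|^{-\beta r^{\prime}}w_k)$ uniformly for $y\in\mathcal{K}$, so that weak convergence of $f_m$ gives pointwise convergence of $u_m^{t}$ with a uniform bound and dominated convergence on $\mathcal{K}$ finishes. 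You instead use the heat subordination formula and a three-scale split: tail by the weighted heat estimates of Proposition \ref{5ppr2}, small-$t$ piece by the $\epsilon$-shift device from the proof of Theorem \ref{5pth6} followed by Theorem \ref{5pth1} at order $\alpha-2\epsilon$ (your exponent bookkeeping is right: $\epsilon>0$ comes from the middle inequality of (\ref{5peq12}), not the left one as you say, and $\alpha-2\epsilon>0$ from the rightmost one), and the middle piece by compactness of $e^{t\Delta_k}$ (Theorem \ref{5pth8}) plus operator-norm continuity in $t$ and a Riemann-sum limit. This is an appealing alternative in that it reuses Sections 5 and 6 and avoids Lemma \ref{5plem1} altogether.

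There is, however, a genuine gap in range: Proposition \ref{5ppr2} and Theorem \ref{5pth8} are stated and proved only for $0<\beta<\frac{d_k}{r^{\prime}}$, whereas Theorem \ref{5pth2} only requires $\beta\geq\gamma>-\frac{d_k}{s}$, so $\beta\leq 0$ is admissible (for instance $r=s=2$, $\gamma=-1<\beta=-\tfrac12$ with a suitable $\alpha$ satisfies (\ref{5peq12})). For $\beta<0$ the global bound $\|e^{t\Delta_k}g\|_{L^{\infty}(\mathbb{R}^{d})}\leq C\||x|^{\beta}g\|_{r,w_k}$ fails (test on bumps escaping to infinity), and for $\beta=0$ the compactness proof of Theorem \ref{5pth8} needs $0<z<\beta$ to get decay at infinity; so both your tail estimate and your middle-piece compactness invoke these results outside their hypotheses. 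The repair is to localize the target to $\mathcal{K}$: for $x$ in a fixed compact set the Gaussian bound (\ref{5peq28}) absorbs the growth of $|y|^{-\beta r^{\prime}}$ at infinity, so local analogues of Proposition \ref{5ppr2}(i)--(iii) and of Theorem \ref{5pth8} with $L^{\infty}(\mathcal{K})$ in place of $L^{\infty}(\mathbb{R}^{d})$ hold for every $\beta<\frac{d_k}{r^{\prime}}$, and your scheme then goes through (the small-$t$ piece already covers all admissible $\beta$). Finally, the operator-norm continuity of $t\mapsto e^{t\Delta_k}$ on $[\delta,T]$, which is the hinge of the Riemann-sum argument, should be justified explicitly, e.g. by writing $h_{t^{\prime}}(x,y)-h_{t}(x,y)=\int_{t}^{t^{\prime}}\partial_{s}h_{s}(x,y)\,ds$, using (\ref{5peq54}) with $m=1$, and repeating the H\"older computation of Proposition \ref{5ppr2}(i) with $h_{2s}$; this is routine but not free.
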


\begin{pf}
Let $u\in \dot{H}_{\beta,k}^{\alpha,r}(\mathbb{R}^{d})$. Then $u=I^{k}_{\alpha}f$, for some $f\in L^{r}(\mathbb{R}^{d},|x|^{\beta r}w_{k})$. 
Now we choose $\tilde{s}$ such that
\begin{eqnarray}\label{5peq13}
\frac{1}{\tilde{s}} (d_{k}+\gamma s)=\frac{d_{k}}{r}+\beta-\alpha. 
\end{eqnarray}
We define $v=\frac{\tilde{s}}{s}$ and $\tilde{\gamma}=\frac{\gamma s}{\tilde{s}}$. From (\ref{5peq12}), it follows that $v>1$ and $\tilde{\gamma}=\frac{\gamma}{v}$. Then (\ref{5peq13}) can be rewritten as 
\begin{eqnarray*}
d_{k}\bigg(\frac{1}{r}-\frac{1}{\tilde{s}}\bigg)=\alpha+\tilde{\gamma}-\beta. 
\end{eqnarray*}
We replace $\gamma$ and $s$ in Theorem \ref{5pth1} by $\tilde{\gamma}$ and $\tilde{s}$ respectively. Since $v>1$, $r\leq s$ implies that $r\leq sv=\tilde{s}$ and $\beta\geq\gamma$ implies $\beta\geq\tilde{\gamma}$. Also $\tilde{\gamma}>-\frac{d_{k}}{\tilde{s}}$ since $\gamma>-\frac{d_{k}}{s}$. Thus all the conditions of Theorem \ref{5pth1} are satisfied and hence from (\ref{5peq14}), we have 
\begin{eqnarray}\label{5peq15}
\||x|^{\tilde{\gamma}}I_{\alpha}^{k}f\|_{L^{\tilde{s}}(\mathbb{R}^{d},w_{k})}\leq C_{k} \||x|^{\beta}f\|_{L^{r}(\mathbb{R}^{d},w_{k})}.
\end{eqnarray}
Applying Holder's inequality with components $v$ and $v^{\prime}$,
\begin{eqnarray*}
\int\limits_{\mathcal{K}} |u(x)|^{s}|x|^{\gamma s} w_{k}(x)dx
&=&\int\limits_{\mathcal{K}} |u(x)|^{s} |x|^{\frac{\gamma s}{v}} |x|^{\frac{\gamma s}{v^{\prime}}} w_{k}(x)dx\\
&\leq&  \bigg(\int\limits_{\mathcal{K}} |u(x)|^{sv} |x|^{\gamma s} w_{k}(x)dx\bigg)^{\frac{1}{v}}\bigg(\int\limits_{\mathcal{K}}|x|^{\gamma s}w_{k}(x)dx\bigg)^{\frac{1}{v^{\prime}}}\\
&\leq& C_{\mathcal{K}} \bigg(\int\limits_{\mathcal{K}} |u(x)|^{\tilde{s}} |x|^{\tilde{\gamma} \tilde{s}} w_{k}(x)dx\bigg)^{\frac{1}{v}},
\end{eqnarray*}
since by property (2) of Section 2, the second integral is finite under the assumption  $\gamma>-\frac{d_{k}}{s}$.
Then using (\ref{5peq15}),
\begin{eqnarray*}
\bigg(\int\limits_{\mathcal{K}} |u(x)|^{s}|x|^{\gamma s} w_{k}(x)dx\bigg)^{\frac{1}{s}}
&\leq& C_{\mathcal{K}} \bigg(\int\limits_{\mathcal{K}} |I_{\alpha}^{k}f(x)|^{\tilde{s}} |x|^{\tilde{\gamma} \tilde{s}} w_{k}(x)dx\bigg)^{\frac{1}{\tilde{s}}}\\
&\leq& C_{\mathcal{K}}\||x|^{\beta}f\|_{L^{r}(\mathbb{R}^{d},w_{k})}=C_{\mathcal{K}} \|u\|_{\dot{H}_{\beta,k}^{\alpha,r}(\mathbb{R}^{d})},
\end{eqnarray*}
proving that the embedding (\ref{5peq16}) is continuous. 

Let us define the kernel of the D-Riesz potential as
\begin{eqnarray*}
K_{\alpha,k}(x)= (c_{\alpha}^{k})^{-1} |x|^{-(d_{k}-\alpha)}
\end{eqnarray*}
and for $t>0$, the truncated kernel as
\begin{eqnarray*}
K_{\alpha,k}^{t}(x)=(c_{\alpha}^{k})^{-1} |x|^{-(d_{k}-\alpha)}\chi_{\{|x|>t\}}.
\end{eqnarray*}
Now following similarly as in the proof of \cite[Lemma 4.2]{pd} and using (\ref{5peq14}), we can prove the following Lemma.
\begin{lem}\label {5plem1}
With the same conditions as that of Theorem \ref{5pth2}, let
\begin{eqnarray*}
\delta=\alpha-\bigg(\frac{d_{k}}{r}-\frac{d_{k}}{s}+\beta-\gamma\bigg).
\end{eqnarray*}
Then for any $f\in L^{r}(\mathbb{R}^{d},|x|^{\beta r}w_{k})$ and for any $t>0$,
\begin{eqnarray*}
\|\big(K_{\alpha,k}^{t}*_{k}f-K_{\alpha,k}*_{k}f\big)|x|^{\gamma}\|_{s,w_{k}}\leq C t^{\delta} \||x|^{\beta}f\|_{r,w_{k}}. 
\end{eqnarray*} 
\end{lem}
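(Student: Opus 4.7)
The plan is to rewrite the difference as a Dunkl convolution against the truncated kernel $K_{\alpha,k}\chi_{B_t}$ (where $B_t=\{x:|x|\le t\}$), reduce to the scale-invariant case $t=1$ via a dilation, and then estimate pointwise by a subcritical D-Riesz potential $I_{\alpha'}^{k}$ to which Theorem~\ref{5pth1} applies. Since $K_{\alpha,k}^{t}-K_{\alpha,k}=-K_{\alpha,k}\chi_{B_t}$, the quantity to be bounded is simply $\||x|^{\gamma}((K_{\alpha,k}\chi_{B_t})\ast_{k}f)\|_{s,w_k}$.

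Setting $\phi:=K_{\alpha,k}\chi_{B_1}$, homogeneity of $K_{\alpha,k}$ gives $(K_{\alpha,k}\chi_{B_t})(x)=t^{\alpha-d_k}\phi(x/t)$. Matching Dunkl transforms through (\ref{5peq38}) and the easily verified identity $\mathcal{F}_{k}(f(t\,\cdot))(\xi)=t^{-d_k}\mathcal{F}_{k}f(\xi/t)$ yields the dilation--translation relation $\tau_{tz}^{k}f(x)=\tau_{z}^{k}(f(t\,\cdot))(x/t)$. Substituting $z=y/t$ in the defining integral of the Dunkl convolution and using $w_k(y)\,dy=t^{d_k}w_k(z)\,dz$ produces
\begin{equation*}
((K_{\alpha,k}\chi_{B_t})\ast_{k}f)(x)=t^{\alpha}(\phi\ast_{k}f_{(t)})(x/t),\qquad f_{(t)}(y):=f(ty).
\end{equation*}
A further change of variables $x=tu$ in the weighted $L^{s}(w_k)$-norm, combined with the analogous rescaling $\||x|^{\beta}f_{(t)}\|_{r,w_k}=t^{-\beta-d_k/r}\||x|^{\beta}f\|_{r,w_k}$, reduces the lemma to the scale-free inequality
\begin{equation*}
\||u|^{\gamma}(\phi\ast_{k}g)\|_{s,w_k}\le C\,\||u|^{\beta}g\|_{r,w_k},
\end{equation*}
applied to $g=f_{(t)}$; collecting the three exponents leaves precisely $t^{\alpha+\gamma-\beta+d_k/s-d_k/r}=t^{\delta}$.

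To prove the scale-free estimate I would introduce $\alpha':=\frac{d_k}{r}-\frac{d_k}{s}+\beta-\gamma$. Condition (\ref{5peq12}) guarantees $0<\alpha'<\alpha<d_k$, and by construction $\alpha'+\gamma-\beta=d_k(\frac{1}{r}-\frac{1}{s})$, so the tuple $(\alpha',\beta,\gamma,r,s)$ verifies every hypothesis of Theorem~\ref{5pth1}. For $|y|\le 1$ the decomposition $|y|^{\alpha-d_k}=|y|^{\alpha-\alpha'}|y|^{\alpha'-d_k}$ together with $\alpha>\alpha'$ gives $\phi(y)\le(c_{\alpha}^{k})^{-1}|y|^{\alpha'-d_k}$ on all of $\mathbb{R}^{d}$. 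Since both $\phi$ and $|\cdot|^{\alpha'-d_k}$ are nonnegative radial functions, Theorem~\ref{5pthm10} (equivalently Lemma~\ref{5plem6}(iii)) upgrades this to the pointwise comparison $\tau_{y}^{k}\phi(x)\le C\,\tau_{y}^{k}(|\cdot|^{\alpha'-d_k})(x)$. Combined with $|\phi\ast_{k}g|\le\phi\ast_{k}|g|$ (valid because $\tau_{y}^{k}\phi\ge 0$), one obtains the pointwise domination $|(\phi\ast_{k}g)(x)|\le C\,I_{\alpha'}^{k}|g|(x)$, and Theorem~\ref{5pth1} applied with the parameter $\alpha'$ closes the argument.

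The main technical point will be a clean derivation of the dilation--translation identity for $\tau_{tz}^{k}$ and the careful tracking of scaling factors across the Dunkl convolution and the two weighted norms; the pointwise comparison at scale one is then immediate from the radial integral formula for $\tau_{y}^{k}$ and requires no genuinely new ingredient.
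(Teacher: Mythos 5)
Your proof is correct and follows essentially the route the paper intends (it simply defers to \cite[Lemma 4.2]{pd} together with Theorem \ref{5pth1}): dominate the truncated piece of the kernel pointwise by the subcritical kernel $|y|^{\alpha'-d_k}$ with $\alpha'=\alpha-\delta$, use positivity of the Dunkl translation on radial functions via Theorem \ref{5pthm10}/Lemma \ref{5plem6}, and then apply Theorem \ref{5pth1} with the shifted exponent $\alpha'$. Your dilation reduction to $t=1$ is harmless but unnecessary: for $|y|\leq t$ one has $|y|^{\alpha-d_k}=|y|^{\delta}|y|^{\alpha'-d_k}\leq t^{\delta}|y|^{\alpha'-d_k}$, so the factor $t^{\delta}$ comes out of the pointwise comparison directly, without any rescaling of the convolution or the weighted norms.
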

Now we shall show that the embedding (\ref{5peq16}) is compact. 

Let $\{u_{m}\}$ be a bounded sequence in $\dot{H}_{\beta,k}^{\alpha,r}(\mathbb{R}^{d})$. Then we can write $u_{m}=I_{\alpha}^{k}f_{m}$, where $\{f_{m}\}$ is also a bounded sequence in $L^{r}(\mathbb{R}^{d},|x|^{\beta r}w_{k})$. Since $L^{r}(\mathbb{R}^{d},|x|^{\beta r}w_{k})$ is a reflexive space, $\{f_{m}\}$ has a subsequence, denoted by $f_{m}$ itself such that $f_{m}$ converges weakly to a function $f$ in $L^{r}(\mathbb{R}^{d},|x|^{\beta r}w_{k})$. Let $u=I_{\alpha}^{k}f$. It is easy to see that $u=K_{\alpha,k}*_{k}f$. Now let us assume that $u_{m}^{t}=K_{\alpha,k}^{t}*_{k}f_{m}$ and $u^{t}=K_{\alpha,k}^{t}*_{k}f$. Consider 
\begin{eqnarray*}
\|(u_{m}-u)|x|^{\gamma}\|_{L^{s}(\mathcal{K},w_{k})}\leq \|(u_{m}-u_{m}^{t})|x|^{\gamma}\|_{L^{s}}+\|(u_{m}^{t}-u^{t})|x|^{\gamma}\|_{L^{s}}+\|(u^{t}-u)|x|^{\gamma}\|_{L^{s}}.
\end{eqnarray*}
Using Lemma \ref{5plem1}, 
\begin{eqnarray*}
\|(u_{m}-u_{m}^{t})|x|^{\gamma}\|_{L^{s}(\mathcal{K},w_{k})}= \|(K_{\alpha,k}*_{k}f_{m}-K_{\alpha,k}^{t}*_{k}f_{m})|x|^{\gamma}\|\leq Ct^{\delta} \||x|^{\beta}f_{m}\|_{r,w_{k}}\leq Dt^{\delta}.
\end{eqnarray*}
Similarly, 
\begin{eqnarray*}
\|(u^{t}-u)|x|^{\gamma}\|_{L^{s}(\mathcal{K},w_{k})}\leq Ct^{\delta}\|x^{\beta}f\|_{r,w_{k}}\leq D t^{\delta}. 
\end{eqnarray*}
Choose $\epsilon>0$. For very small $t>0$, each of the above estimates can be made less that $\frac{\epsilon}{3}$ for all m. We are left to get bound for $\|(u_{m}^{t}-u^{t})|x|^{\gamma}\|_{L^{s}(\mathcal{K},w_{k})}$. 

We note that the function $K_{\alpha,k}^{t}$ is a radial function with no singular point and therefore, by applying (\ref{eqn2}) for the function $K_{\alpha,k}^{t}$, we get
\begin{eqnarray}
\tau_{y}^{k}K_{\alpha,k}^{t}(x)
&=&\int\limits_{\mathbb{R}^{d}}K_{\alpha,k}^{t}(\sqrt{|x|^{2}+|y|^{2}-2\langle y,\eta\rangle})d\mu_{x}^{k}(\eta)\nonumber\\
&=& \int\limits_{\mathbb{R}^{d}} \frac{(c_{\alpha}^{k})^{-1}}{(|x|^{2}+|y|^{2}-2\langle y,\eta\rangle)^{\frac{d_{k}-\alpha}{2}}}\chi_{\{\eta:|x|^{2}+|y|^{2}-2\langle y,\eta\rangle\geq t^{2}\}}{(\eta)}d\mu_{x}^{k}(\eta)\nonumber\\
&=& \int\limits_{\eta:A(x,y,\eta)\geq t}\frac {(c_{\alpha}^{k})^{-1}}{(A(x,y,\eta))^{d_{k}-\alpha}} d\mu_{x}^{k}(\eta),\label{5peq41}
\end{eqnarray}
where $A(x,y,\eta)=\sqrt{|x|^{2}+|y|^{2}-2\langle y,\eta\rangle}$. Then
\begin{eqnarray}
\tau_{y}^{k}K_{\alpha,k}^{t}(x)
&\leq& \frac{(c_{\alpha}^{k})^{-1}}{t^{d_{k}-\alpha}}d\mu_{x}^{k}\{{\eta: A(x,y,\eta)\geq t}\}
\leq \frac{(c_{\alpha}^{k})^{-1}}{t^{d_{k}-\alpha}},\label{5peq18}
\end{eqnarray}
since $d\mu_{x}^{k}$ is a probability measure. 

It is proved in \cite {ba} that 
\begin{eqnarray}\label{5peq26}
\min\limits_{g\in G}|g.x-y|\leq A(x,y,\eta)\leq \max\limits_{g\in G} |g.x-y|, \forall~ x,y\in\mathbb{R}^{d} ~and~\eta\in co (G.x).
\end{eqnarray}
Let $y\in \mathcal{K}$, where $\mathcal{K}$ is a compact set in $\mathbb{R}^{d}$. Then there exists $R>0$ such that $\mathcal{K}\subset B(0,R)$. 

Consider 
\begin{eqnarray}
&&\int\limits_{\mathbb{R}^{d}} |\tau_{y}^{k}K_{\alpha,k}^{t}(x)|^{r^{\prime}}|x|^{-\beta r^{\prime}}w_{k}(x)dx\nonumber\\
&=& \int\limits_{|x|\leq 2R}| \tau_{y}^{k}K_{\alpha,k}^{t}(x)|^{r^{\prime}}|x|^{-\beta r^{\prime}}w_{k}(x)dx +\int\limits_{|x|> 2R} |\tau_{y}^{k}K_{\alpha,k}^{t}(x)|^{r^{\prime}}|x|^{-\beta r^{\prime}}w_{k}(x)dx\nonumber\\
&=& I_{1}(y)+I_{2}(y). \label{5peq19}
\end{eqnarray}
Substituting the bound for $\tau_{y}^{k}K_{\alpha,k}^{t}$ from (\ref{5peq18}) in $I_{1}(y)$, we have
\begin{eqnarray*}
I_{1}(y)\leq \frac{(c_{\alpha}^{k})^{-r^{\prime}}}{t^{r^{\prime}(d_{k}-\alpha)}}\int\limits_{|x|\leq 2R}|x|^{-\beta r^{\prime}}w_{k}(x)dx=\frac{(c_{\alpha}^{k})^{-r^{\prime}}}{t^{r^{\prime}(d_{k}-\alpha)}}\frac{a_{k}(2R)^{d_{k}-\beta r^{\prime}}}{d_{k}-\beta r^{\prime}}= M_{1},
\end{eqnarray*}
using property (2) of section 2 provided $\beta r^{\prime}<d_{k}$ , that is, if $\beta<\frac{d_{k}}{r^{\prime}}$. On the other hand, for the integral $I_{2}(y)$, substituting the value of  $\tau_{y}^{k}K_{\alpha,k}^{t}$ from (\ref{5peq41}) and then using (\ref{5peq26}), we get
\begin{eqnarray}
I_{2} (y) &=&\int\limits_{|x|>2R} \left|\int\limits_{\eta:A(x,y,\eta)\geq t}\frac{(c_{\alpha}^{k})^{-1}}{(A(x,y,\eta))^{d_{k}-\alpha}}d\mu_{x}^{k}(\eta)\right|^{r^{\prime}}|x|^{-\beta r^{\prime}} w_{k}(x)dx\nonumber\\
&\leq& \int\limits_{|x|>2R} \left|\frac{(c_{\alpha}^{k})^{-1}}{\bigg(\min\limits_{g\in G}|gx-y|\bigg)^{d_{k}-\alpha}}\right|^{r^{\prime}} |x|^{-\beta r^{\prime}}w_{k}(x)dx\nonumber\\
&=& (c_{\alpha}^{k})^{-r^{\prime}}\int\limits_{|x|>2R} \frac{|x|^{-\beta r^{\prime}}}{\bigg(\min\limits_{g\in G}|g.x-y|\bigg)^{r^{\prime}(d_{k}-\alpha)}}w_{k}(x)dx.\label{5peq20}
\end{eqnarray}
Since g is a reflection, we observe that 
\begin{eqnarray*}
|g.x-y|\geq |g.x|-|y|=|x|-|y|,~\forall~ g~\in G.
\end{eqnarray*}
If $y\in\mathcal{K}$ and $x$ is in the integration region of $I_{2}(y)$, then $|y|\leq R\leq \frac {|x|}{2}$. So $|g.x-y|\geq \frac{|x|}{2}$. Therefore, $\min\limits_{g\in G} |g.x-y|\geq \frac {|x|}{2}$ for all $y\in\mathcal{K}$. From (\ref{5peq20}),
\begin{eqnarray*}
I_{2}(y)\leq (c_{\alpha}^{k})^{-r^{\prime}} 2^{r^{\prime}(d_{k}-\alpha)} \int\limits_{|x|>2R} \frac {|x|^{-\beta r^{\prime}}}{|x|^{r^{\prime}(d_{k}-\alpha)}}w_{k}(x)dx\leq M_{2},
\end{eqnarray*}
if $d_{k}<\beta r^{\prime}+r^{\prime}(d_{k}-\alpha)$ i.e., if $\alpha<\beta+\frac{d_{k}}{r}$. Thus from (\ref{5peq19}),
\begin{eqnarray}\label{5peq21}
\int\limits_{\mathbb{R}^{d}} |\tau_{y}^{k}K_{\alpha,k}^{t}(x)|^{r^{\prime}}|x|^{-\beta r^{\prime}}w_{k}(x)dx\leq M_{1}+M_{2},~\forall~ y\in\mathbb{R}^{d}. 
\end{eqnarray}
In particular, we have proved that $\tau_{y}^{k}K_{\alpha,k}^{t}\in L^{r^{\prime}}(\mathbb{R}^{d},|x|^{-\beta r^{\prime}} w_{k}),~\forall~y\in\mathcal{K}$. Since $f_{m}$ converges weakly to $f$ in $L^{r}(\mathbb{R}^{d},|x|^{\beta r} w_{k})$, $K_{\alpha,k}^{t}*_{k}f_{m}(y)$ converges to $K_{\alpha,k}^{t}*_{k}f(y)$ as a sequence of complex numbers. Thus $u_{m}^{t}(y)$ converges to $u^{t}(y)$ for all $y\in\mathcal{K}$. Moreover, since
\begin{eqnarray*}
u_{m}^{t}(y)=K_{\alpha,k}^{t}*_{k} f_{m} (y)=\int\limits_{\mathbb{R}^{d}} f_{m}(x)\tau_{x}^{k}K_{\alpha,k}^{t}(y)w_{k}(x)dx,
\end{eqnarray*}
using (\ref{5peq21}) and the fact that $f_{m}$ is a bounded sequence in $L^{r}(\mathbb{R}^{d}, |x|^{\beta r}w_{k})$, 
\begin{eqnarray*}
|u_{m}^{t}(y)|\leq \bigg(\int\limits_{\mathbb{R}^{d}}|f_{m}(x)|^{r}|x|^{\beta r}w_{k}(x)dx\bigg)^{\frac{1}{r}} \bigg(\int\limits_{\mathbb{R}^{d}}|\tau_{x}^{k}K_{\alpha,k}^{t}(y)|^{r^{\prime}}|x|^{-\beta r^{\prime}}w_{k}(x)dx\bigg)^{\frac{1}{r^{\prime}}} \leq A. 
\end{eqnarray*}
By Lebesgue Dominated convergence theorem, $\|u_{m}^{t}-u^{t}|x|^{\gamma}\|_{L^{s}(\mathcal{K},w_{k})}$ converges to zero (as the weight $|x|^{\gamma s}w_{k}$ is integrable on $\mathcal{K}$ under the condition $\gamma>-\frac{d_{k}}{s}$). Hence we can make it less than $\frac{\epsilon}{3}$ for large m. Thus $\|(u_{m}-u)|x|^{\gamma}\|_{L^{s}(\mathcal{K},w_{k})}\leq\epsilon$ for large m, proving that $u_{m}$ converges to $u$ strongly on $L^{s}(\mathcal{K},|x|^{\gamma s}w_{k})$. This completes the proof that the embedding (\ref{5peq16}) is compact. 
\end{pf}

Now using Theorem \ref{5pth6} and Theorem \ref{5pth2}, we are ready to prove our main result that (\ref{5peq27}) has a maximizer. 
\begin{thm}\label{5pth7}
Let $d\geq 2, 2<s<\infty,-\frac{d_{k}}{s}<\gamma<\beta,0<\beta<\frac{d_{k}}{2}$ and the relation
\begin{eqnarray}\label{5peq45}
\frac{1}{s}-\frac{1}{2}=\frac{\beta-\gamma-\alpha}{d_{k}}
\end{eqnarray}
holds. Then there exists a maximizer for $W_{k}$.  
\end{thm}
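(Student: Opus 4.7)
The plan is to adapt the concentration-compactness argument of \cite{pd} to the Dunkl setting. First, I would take a maximizing sequence $(f_j) \subset L^2(\mathbb{R}^d, |x|^{2\beta} w_k)$ normalized by $\||x|^\beta f_j\|_{L^2(\mathbb{R}^d, w_k)} = 1$, so that $\||x|^\gamma I_\alpha^k f_j\|_{L^s(\mathbb{R}^d, w_k)} \to W_k$. Applying Theorem \ref{5pth6} with $r = 2$ and any $\theta$ satisfying $\max\{2/s, \mu/(\mu+\alpha)\} < \theta < 1$ (relation (\ref{5peq45}) combined with (\ref{5peq43}) forces $\mu = d_k/2 + \beta - \alpha = \gamma + d_k/s > 0$) yields $W_k \leq C_k \limsup_j \|f_j\|^{1-\theta}_{\dot{B}_{\infty,\infty}^{-\mu-\alpha,k}}$, so this Besov norm is bounded below by some $c_0 > 0$. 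The definition of the Besov norm in (\ref{Besov norm}) then supplies sequences $t_j > 0$ and $x_j \in \mathbb{R}^d$ with $t_j^{(\mu+\alpha)/2}|e^{t_j\Delta_k} f_j(x_j)| \geq c_0/2$.

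Next, I exploit scale invariance by setting $\tilde{f}_j(y) := t_j^{(d_k/2+\beta)/2} f_j(\sqrt{t_j}\, y)$. Using (\ref{5peq45}), the homogeneity of $\Phi$ from Lemma \ref{5plem6}(ii) (which yields $I_\alpha^k[f(\lambda\cdot)](x) = \lambda^{-\alpha}(I_\alpha^k f)(\lambda x)$), and the parabolic scaling $\Delta_k[f(\lambda\cdot)] = \lambda^2(\Delta_k f)(\lambda\cdot)$, a direct computation confirms $\||x|^\beta \tilde{f}_j\|_{L^2(w_k)} = 1$ and $\||x|^\gamma I_\alpha^k \tilde{f}_j\|_{L^s(w_k)} = \||x|^\gamma I_\alpha^k f_j\|_{L^s(w_k)}$. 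The heat scaling $e^{\Delta_k}\tilde{f}_j(y) = t_j^{(d_k/2+\beta)/2}(e^{t_j\Delta_k}f_j)(\sqrt{t_j}\,y)$, evaluated at $y_j := x_j/\sqrt{t_j}$ and combined with the critical identity $d_k/2 + \beta = \mu + \alpha$, converts the Besov lower bound into $|e^{\Delta_k}\tilde{f}_j(y_j)| \geq c_0/2$. Proposition \ref{5ppr2}(ii) with $r=2$, $w=\beta$, $t=1$ gives $|y|^\beta |e^{\Delta_k}\tilde{f}_j(y)| \leq D_{d,2,\beta,1,k}$, so $|y_j|$ is uniformly bounded; I extract a subsequence with $y_j \to y_0 \in \mathbb{R}^d$ and $\tilde{f}_j \rightharpoonup \tilde{f}$ weakly in the reflexive space $L^2(\mathbb{R}^d, |x|^{2\beta}w_k)$.

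Now, the map $y \mapsto \tau_y^k q_1^k(x)$ belongs to $L^2(\mathbb{R}^d, |y|^{-2\beta}w_k)$ by (\ref{5peq4}), property (2) of Section 2, and the hypothesis $\beta < d_k/2$; hence weak convergence gives $e^{\Delta_k}\tilde{f}_j(y_j) \to e^{\Delta_k}\tilde{f}(y_0)$ and $\tilde{f} \not\equiv 0$. Applying Theorem \ref{5pth2} over a compact exhaustion of $\mathbb{R}^d$ together with a diagonal extraction provides $u_j := I_\alpha^k \tilde{f}_j \to u := I_\alpha^k \tilde{f}$ pointwise almost everywhere. Setting $A := \||x|^\beta \tilde{f}\|_{L^2(w_k)}^2 \in (0,1]$ and $B := 1 - A$, weak convergence in the Hilbert space $L^2(\mathbb{R}^d, |x|^{2\beta}w_k)$ yields $\lim_j \||x|^\beta(\tilde{f}_j - \tilde{f})\|_{L^2(w_k)}^2 = B$, and the Brezis-Lieb lemma (Lemma \ref{5plem4}) applied to $|x|^\gamma u_j$ in $L^s(w_k)$ gives
\begin{equation*}
W_k^s = \||x|^\gamma u\|_{L^s(w_k)}^s + \lim_j \||x|^\gamma(u_j - u)\|_{L^s(w_k)}^s.
\end{equation*}
Applying Theorem \ref{5pth1} with the sharp constant $W_k$ to $\tilde{f}$ and to $\tilde{f}_j - \tilde{f}$ gives $1 \leq A^{s/2} + B^{s/2}$; since $s/2 > 1$ forces $t^{s/2} < t$ for $t \in (0,1)$, combined with $A + B = 1$ and $A > 0$, this forces $A = 1$, so $\tilde{f}$ is the sought extremal.

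The principal obstacle is paragraph two. The weight $|x|^\beta$ breaks translation invariance of the Stein-Weiss inequality, so the only available symmetry is dilation (made scale-preserving by (\ref{5peq45})); one must therefore show that the bubble cannot escape to spatial infinity, and this is precisely where the weighted heat estimates of Section 5, especially Proposition \ref{5ppr2}(ii), are indispensable. A secondary delicate point is the weak continuity of pointwise evaluation $f \mapsto e^{\Delta_k}f(y_0)$ on the weighted Hilbert space, which relies on the assumption $\beta < d_k/2$ to place the heat kernel in the predual.
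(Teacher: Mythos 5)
Your proposal is correct in substance and follows the same skeleton as the paper's proof: normalize a maximizing sequence, use Theorem \ref{5pth6} with $\mu=\frac{d_k}{2}+\beta-\alpha$ (so that (\ref{5peq45}) yields (\ref{5peq43})) to force a positive lower bound on $\|f_j\|_{\dot{B}_{\infty,\infty}^{-\mu-\alpha,k}}$, rescale by $t_j$ to $\tilde f_j$ using the invariance of both weighted norms, pass to a weak limit in $L^2(\mathbb{R}^d,|x|^{2\beta}w_k)$, rule out vanishing, and then use the compact embedding of Theorem \ref{5pth2} plus a diagonal argument to get a.e.\ convergence of $u_j=I_\alpha^k\tilde f_j$. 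Where you genuinely diverge is in the non-vanishing step and the endgame. The paper does not track concentration points at all: it uses $\|e^{\Delta_k}\tilde f_j\|_{L^\infty}\geq C_k/2$ together with Theorem \ref{5pth8} (compactness of $e^{\Delta_k}$ from $L^2(\mathbb{R}^d,|x|^{2\beta}w_k)$ to $L^\infty(\mathbb{R}^d)$) to get strong $L^\infty$ convergence, whence $\|e^{\Delta_k}h\|_{L^\infty}>0$; your variant, which bounds the rescaled points $y_j$ via Proposition \ref{5ppr2}(ii) and evaluates at $y_j\to y_0$, works but as written has a small hole: weak convergence only gives $e^{\Delta_k}\tilde f_j(y)\to e^{\Delta_k}\tilde f(y)$ at a \emph{fixed} $y$, so to handle the moving points you also need the equicontinuity coming from Proposition \ref{5ppr2}(iii) (at which point you have essentially reconstructed the proof of Theorem \ref{5pth8}, so invoking it directly is cleaner). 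Your endgame via the Hilbert-space identity, Brezis--Lieb (Lemma \ref{5plem4}) and strict concavity of $t\mapsto t^{s/2}$ is exactly the argument the paper outsources to Theorem 5.1 of \cite{pd}, so writing it out is a welcome addition rather than a deviation. Two minor repairs: your choice of $\theta$ must also satisfy $\theta>\gamma/\beta$ (Theorem \ref{5pth6} requires $\beta\geq\gamma/\theta$, and the paper includes this in the max), and before applying Theorem \ref{5pth6} you should note, as the paper does via (\ref{5peq46}), that $\||x|^\beta f_j\|_{L^2}=1$ already places $f_j$ in $\dot{B}_{\infty,\infty}^{-\mu-\alpha,k}$ with a uniform bound.
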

\begin{pf}
Let $\{f_j\}_{j\in \mathbb{d}}$ be a maximizing sequence for $W_k$, which we can take to be normalized i.e., 
\begin{eqnarray}\label{5peq48}
\||x|^{\beta}f_{j}\|_{L^{2}(\mathbb{R}^{d},w_{k})}=1~and~ \||x|^{\gamma}I_{\alpha}^{k}f_{j}\|_{L^{s}(\mathbb{R}^{d},w_{k})}\to W_{k}. 
\end{eqnarray}
Now, in Theorem \ref{5pth6}, we set 
\begin{eqnarray}\label{5peq53}
\mu=\frac{d_{k}}{2}+\beta-\alpha,
\end{eqnarray}
and choose $\theta$ such that 
\begin{eqnarray*}
\max\bigg\{\frac{2}{s},\frac{\mu}{\mu+\alpha},\frac{\gamma}{\beta}\bigg\}<\theta<1.
\end{eqnarray*}
Because of relation (\ref{5peq45}), equation (\ref{5peq43}) holds for this particular choice of $\mu$. It is also easy to see that remaining conditions of Theorem \ref{5pth6} hold under the hypothesis of the Theorem and the choice of $\mu,\theta$. Also since relation (\ref{5peq53}) holds, from (\ref{5peq46}),
\begin{eqnarray*}
 \|f_{j}\|_{\dot{B}_{\infty,\infty}^{-\mu-\alpha,k}}
 =\sup\limits_{t>0} t^{\frac{\mu+\alpha}{2}} \|e^{t\Delta_{k}}f_{j}\|_{L^{\infty}}
&=&\sup\limits_{t>0} t^{\frac{1}{2}(\frac{d_{k}}{2}+\beta)}\|e^{t\Delta_{k}}f_{j}\|_{L^{\infty}}\\
&\leq& C_{d,\beta,k}\||x|^{\beta}f_{j}\|_{L^{2}}=C_{d,\beta,k},
\end{eqnarray*}
showing that $f_{j}\in {\dot{B}_{\infty,\infty}^{-\mu-\alpha,k}}$. Hence we can apply Theorem \ref{5pth6} and use (\ref{5peq48}) to get, 
\begin{eqnarray*}
\|f_{j}\|_{\dot{B}_{\infty,\infty}^{-\mu-\alpha,k}}
\geq C_{k}>0. 
\end{eqnarray*}
In other words,
\begin{equation*}
\sup_{t>0}t^{\frac{\mu+\alpha}{2}}\|e^{t\Delta_k}f_j\|_{L^\infty}\geq C_{k}>0.
\end{equation*}
It then follows that for each $j\in\mathbb{N}$, there exists $t_j>0$ such that
\begin{equation}\label{5peq50}
t_j^{\frac{\mu+\alpha}{2}}\|e^{t_j\Delta_k}f_j\|_{L^\infty}\geq \frac{C_{k}}{2}. 
\end{equation}
Now, we define 
\begin{eqnarray*}
\Tilde{f}_j(x):=t_j^{\frac{1}{2}(\frac{d_k}{2}+\beta)}f_j(t_j^{\frac{1}{2}}x).
\end{eqnarray*}
Then it is easy to see that 
\begin{eqnarray*}
\||x|^{\beta}\tilde{f_{j}}\|_{L^{2}(\mathbb{R}^{d},w_{k})}=\||x|^{\beta}f_{j}\|_{L^{2}(\mathbb{R}^{d},w_{k})}.
\end{eqnarray*}
Also, 
\begin{eqnarray}\label{5peq47}
\||x|^{\gamma}I_{\alpha}^{k}\tilde{f_{j}}\|_{L^{s}(\mathbb{R}^{d},w_{k})}=\||x|^{\gamma}I_{\alpha}^{k} f_{j}\|_{L^{s}(\mathbb{R}^{d},w_{k})}.
\end{eqnarray}
Indeed, by Lemma \ref{5plem6} we have
\begin{eqnarray*}
I_{\alpha}^{k}\tilde{f_{j}}(x) &=& (c_{\alpha}^{k})^{-1} \int\limits_{\mathbb{R}^{d}} \tilde {f_{j}}(y)\tau_{y}^{k}|.|^{\alpha-d_{k}}(x) w_{k}(y)dy\\
&=& (c_{\alpha}^{k})^{-1}t_{j}^{\frac{1}{2}(\frac{d_{k}}{2}+\beta)}\int\limits_{\mathbb{R}^{d}} f_{j}(t_{j}^{\frac{1}{2}}y)\Phi (x,y) w_{k}(y)dy\\
&=&  (c_{\alpha}^{k})^{-1}t_{j}^{\frac{1}{2}(\frac{d_{k}}{2}+\beta)}t_{j}^{-\frac{d_{k}}{2}}\int\limits_{\mathbb{R}^{d}} f_{j}(y)\Phi (x,t_{j}^{-\frac{1}{2}}y) w_{k}(y)dy\\
&=& (c_{\alpha}^{k})^{-1}t_{j}^{\frac{1}{2}(\frac{d_{k}}{2}+\beta)}t_{j}^{-\frac{d_{k}}{2}}\int\limits_{\mathbb{R}^{d}} f_{j}(y)\Phi (t_{j}^{-\frac{1}{2}}y,x) w_{k}(y)dy\\
&=& (c_{\alpha}^{k})^{-1}t_{j}^{\frac{1}{2}(\frac{d_{k}}{2}+\beta)}t_{j}^{-\frac{d_{k}}{2}}t_{j}^{-\frac{\alpha-d_{k}}{2}}\int\limits_{\mathbb{R}^{d}} f_{j}(y)\Phi (y,t_{j}^{\frac{1}{2}}x) w_{k}(y)dy\\
&=& t_{j}^{\frac{1}{2}(\frac{d_{k}}{2}+\beta)-\frac{\alpha}{2}} I_{\alpha}^{k} f_{j}(t_{j}^{\frac{1}{2}}x).
\end{eqnarray*}
Then by substituting the value of $\gamma$ from (\ref{5peq45}), 
\begin{eqnarray*}
\||x|^{\gamma}I_{\alpha}^{k}\tilde{f_{j}}\|_{s,w_{k}}^{s}
&=& t_{j}^{\frac{s}{2}(\frac{d_{k}}{2}+\beta)-\frac{\alpha s}{2}} \int\limits_{\mathbb{R}^{d}} |I_{\alpha}^{k}f_{j}(t_{j}^{\frac{1}{2}}x)|^{s}w_{k}(x)dx\\
&=& t_{j}^{\frac{s}{2}(\frac{d_{k}}{2}+\beta)-\frac{\alpha s}{2}} t_{j}^{-\frac{\gamma s}{2}-\frac{d_{k}}{2}} \||x|^{\gamma}I_{\alpha}^{k} f_{j}\|_{s,w_{k}}^{s}\\
&=& \||x|^{\gamma}I_{\alpha}^{k} f_{j}\|_{s,w_{k}}^{s},
\end{eqnarray*}
thus proving (\ref{5peq47}). As a consequence, using (\ref{5peq48}), we have
\begin{eqnarray}\label{5peq52}
\||x|^{\beta}\tilde {f_{j}}\|_{L^{2}(\mathbb{R}^{d},w_{k})}=1~and~ \||x|^{\gamma}I_{\alpha}^{k}\tilde {f_{j}}\|_{L^{s}(\mathbb{R}^{d},w_{k})}\to W_{k},
\end{eqnarray}
which shows that $\{\tilde {f_{j}}\}_{j}$ is also a maximizing sequence for $W_{k}$.

Moreover, using (\ref{5peq4}), we observe that 
\begin{eqnarray*}
e^{1.\Delta_{k}}\tilde {f_{j}}(x)
&=&\int\limits_{\mathbb{R}^{d}} \tilde {f_{j}}(y) \tau_{y}^{k}q_{1}^{k}(x)w_{k}(y)dy\\
&=&t_j^{\frac{1}{2}(\frac{d_k}{2}+\beta)} t_{j}^{-\frac{d_{k}}{2}}\int\limits_{\mathbb{R}^{d}} f_{j}(y) \big(\tau_{t_{j}^{-\frac{1}{2}}y}^{k}q_{1}^{k}\big) (x)w_{k}(y)dy\\
&=& t_j^{\frac{1}{2}(\frac{d_k}{2}+\beta)} t_{j}^{-\frac{d_{k}}{2}}\int\limits_{\mathbb{R}^{d}} f_{j}(y) 2^{-\frac{d_{k}}{2}} e^{-\frac{|x|^{2}+|t_{j}^{-\frac{1}{2}}y|^{2}}{4}} E_{k}\bigg(\frac{x}{\sqrt{2}},\frac{t_{j}^{-\frac{1}{2}}y}{\sqrt{2}}\bigg)w_{k}(y)dy\\
&=& t_j^{\frac{1}{2}(\frac{d_k}{2}+\beta)}\int\limits_{\mathbb{R}^{d}} f_{j}(y)(2t_{j})^{-\frac{d_{k}}{2}} e^{-\frac{|t_{j}^{\frac{1}{2}}x|^{2}+|y|^{2}}{4t_{j}}}E_{k}\bigg(\frac{t_{j}^{\frac{1}{2}}x}{\sqrt{2t_{j}}},\frac{y}{\sqrt{2t_{j}}}\bigg)w_{k}(y)dy\\
&=& t_j^{\frac{1}{2}(\frac{d_k}{2}+\beta)}\int\limits_{\mathbb{R}^{d}} f_{j}(y)(\tau_{y}^{k}q_{t_{j}}^{k})(t_{j}^{\frac{1}{2}}x) w_{k}(y)dy\\
&=& t_j^{\frac{1}{2}(\frac{d_k}{2}+\beta)} (e^{t_{j}\Delta_{k}}f_{j}) (t_{j}^{\frac{1}{2}}x).
\end{eqnarray*}
Using (\ref{5peq50}), 
\begin{eqnarray}\label{5peq49}
\|e^{1.\Delta_{k}}\tilde {f_{j}}\|_{L^{\infty}}= t_j^{\frac{1}{2}(\frac{d_k}{2}+\beta)} \|(e^{t_{j}\Delta_{k}}f_{j}) (t_{j}^{\frac{1}{2}}.)\|_{L^{\infty}}
= t_{j}^{\frac{\mu+\alpha}{2}} \|e^{t_{j}\Delta_{k}}f_{j}\|\geq \frac{C_{k}}{2}>0. 
\end{eqnarray}
Since $\{\tilde{f_{j}}\}_{j}$ is a bounded sequence in $L^{2}(\mathbb{R}^{d},|x|^{2\beta} w_{k})$, by reflexivity, it has a subsequence still denoted by $\tilde{f_{j}}$ such that $\tilde{f_{j}}$ converges weakly to a function $h$ in $L^2(\mathbb{R}^d,|x|^{2\beta}w_k)$. Our aim is to show that $h$ is indeed a maximizer for $W_{k}$, that is, 
\begin{eqnarray}\label{5peq51}
\||x|^{\beta}h\|_{L^{2}(\mathbb{R}^{d},w_{k})}=1~\rm{and}~\||x|^{\gamma}I_{\alpha}^{k} h\|_{L^{s}(\mathbb{R}^{d},w_{k})}= W_{k}. 
\end{eqnarray}
Now we set
\begin{eqnarray*}
u_{j}:= I_{\alpha}^{k} \tilde{f_{j}}~\rm{and}~ v:= I_{\alpha}^{k}h.
\end{eqnarray*}
Since by Theorem \ref{5pth8}, $e^{1.\Delta_{k}}$ is a compact operator from $L^{2}(\mathbb{R}^{d},|x|^{2\beta} w_{k})$ into $L^{\infty}(\mathbb{R}^{d})$, passing through a subsequence, we have $e^{1.\Delta_{k}}\tilde{f_{j}}$ converges strongly to $e^{1.\Delta_{k}} h$ in $L^{\infty}(\mathbb{R}^{d})$. Then from (\ref{5peq49}), $\|e^{1.\Delta_{k}} h\|_{L^{\infty}}\geq\frac{C}{2}>0$, which implies that $h\not\equiv 0$. Again by taking $r=2, \beta=\gamma$ in Theorem \ref{5pth2}, we observe that all the conditions of Theorem \ref{5pth2} are satisfied under the hypothesis of the given Theorem and therefore, if $\mathcal{K}$ is a compact set in $\mathbb{R}^{d}$, we have the compact embedding
\begin{eqnarray*}
\dot{H}_{\beta,k}^{\alpha,2}(\mathbb{R}^{d})\subset L^{s}(\mathcal{K},|x|^{\beta s}w_{k}).
\end{eqnarray*}
By observing that $u_{j}$ is a bounded sequence in $\dot{H}_{\beta,k}^{\alpha,2}(\mathbb{R}^{d})$ and thereafter following the proof of Theorem \ref{5pth2}, we can show that $u_{j}$ converges strongly to $v$ in $L^{s}(\mathcal{K},|x|^{\beta s}w_{k})$. Therefore, up to a subsequence, $u_{j}$ converges to $v$ a.e. in $\mathcal{K}$ and by using diagonal argument, further, up to a subsequence, $u_{j}$ converges to $v$ a.e. in $\mathbb{R}^{d}$. 

Now proceeding exactly, as in the proof of Theorem 5.1 in \cite{pd}, we can prove that $\||x|^{\beta}h\|_{L^{2}(\mathbb{R}^{d},w_{k})}=1$ and $\tilde{f_{j}}\to h$ strongly in $L^{2}(\mathbb{R}^{d},|x|^{2\beta}w_{k})$. Since by Theorem \ref{5pth1}, the operator $I_{\alpha}^{k}$ is continuous from $L^{2}(\mathbb{R}^{d},|x|^{2\beta}w_{k})$ into $L^{s}(\mathbb{R}^{d},|x|^{\gamma s}w_{k})$, hence
\begin{eqnarray*}
u_{j}\to v ~\rm{strongly}~in~ L^{s}(\mathbb{R}^{d},|x|^{\gamma s}w_{k}).
\end{eqnarray*}
This implies that $\|u_{j}\|\to \|v\|$ in $L^{s}(\mathbb{R}^{d},|x|^{\gamma s}w_{k})$. Now (\ref{5peq51}) will follow from (\ref{5peq52}), which completes the proof.  
\end{pf}


\begin{thebibliography}{99}
\bibitem {cam} {\bf {C. Abdelkefi and M. Rachdi}}, {\textit {Some properties of the Riesz potentials in Dunkl analysis}}, Ricerche
Mat. 64 (2015), no. 4, 195–215.
\bibitem {ba} {\bf{B. Amri and M. Sifi}}, {\textit{Riesz transforms for Dunkl transform}}, Ann. Math. Blaise Pascal, 19 (2012), 247-262.  
\bibitem {jpa} {\bf {J.P. Anker, J. Dziubański and A. Hejna}}, {\textit {Harmonic Functions, Conjugate Harmonic Functions and
the Hardy Space H 1 in the Rational Dunkl Setting}}, Journal of Fourier Analysis and Applications
https://doi.org/10.1007/s00041-019-09666-0.
\bibitem{beck2}
{\bf{W. Beckner}}, \textit{Weighted inequalities and Stein-Weiss potentials.}
Forum Math., 20(2008), 587-606.
\bibitem{beck4}
{\bf{W. Beckner}}, \textit{Inequalities in Fourier analysis.}
Annals Math. 102 (1975), 159-182.
\bibitem{chen}
{\bf{L. Chen, G. Lu}}, \textit{Existence of extremal functions for the Stein-Weiss inequalities on the Heisenberg group.}
To appear in Journal of Functional Analysis.
\bibitem {cd} {\bf{C. F. Dunkl}}, {\textit{Differential-difference operators associated to reflection groups}}, Trans. Amer. Math. Soc. 311,(1989), 167-183.
\bibitem {rl} {\bf{R. L. Frank}}, {\textit{Sobolev inequalities and uncertainty principles in mathematical physics, Part 1}}.http://www.math.caltech.edu/~rlfrank/sobweb1.pdf pp. 1601-1610.
\bibitem {dvg} {\bf{D. V. Gorbachev, V. I. Ivanov, and S. YU. Tikhonov}}, {\textit{Positive $L^{p}-$}bounded Dunkl-type generalized translation operator and its applications}, Constr. Approx.doi.org/10.1007/s00365-018-9435-5.
\bibitem {dg} {\bf{D. V. Gorbachev, V. I. Ivanov, and S. YU. Tikhonov}}, {\textit{ Riesz Potential and maximal function for Dunkl transform}} CRM Preprint Series number 1238, 2018.
\bibitem{han1}
{\bf{X. Han, G. Lu, J. Zhu}}, \textit{Hardy-Littlewood-Sobolev and Stein-Weiss inequalities and integral systems on the Heisenberg group.}
Nonlinear Anal., 75(2012), 4296-4314.
\bibitem{han2}
{\bf{X. Han}}, \textit{Existence of maximizers for Hardy-Littlewood-Sobolev inequalities on the Heisenberg group.}
Indiana Univ. Math. J., 62 (2013), 737-751.
\bibitem {shs} {\bf{S. Hassani, S. Mustapha nand M. Siphi}}, {\textit{Riesz potentials and fractional maximal function for the Dunkl transform}}, J. Lie Theory. 19 (2009), 725-734.
\bibitem {ml} {\bf{M. Ledoux}}, {\textit{On improved Sobolev embedding theorems}}, Math. Res. Lett. 10 (2003), 659-669.
\bibitem {ehl} {\bf {E. H. Lieb and M. Loss}}, {\textit {Analysis, Second edition}}, Graduate studies in Mathematics, Volume 14. American Mathematical Society, 2001.  
\bibitem{Lieb} {\bf{E. Lieb}}, \textit{Sharp constants in the Hardy Littlewood-Sobolev and related inequalities.} 
Ann. Math. 118(1983), 349-374.
\bibitem {vg} {\bf{V. G. Maz'ya}} {\textit{Sobolev spaces}}, Springer, Berlin, 1985.
\bibitem{pd} {\bf{P. D. Napoli, I. Drelichman, and A. Salort}} {\textit{Weighted inequalities for the Fractional Laplacian and the existence of extremals}},Commun. Contemp. Math. doi.org/10.1142/S0219199718500347.
\bibitem {rsl} {\bf{M. R$\ddot{o}$sler}}, {\textit{Generalized Hermite polynomials and the heat equation for Dunkl operators}}, Comm. Math. Phys. 192 (1998), 519-542. 
\bibitem {rsl1} {\bf{M. R$\ddot{o}$sler}}, {\textit {A positive radial product formula for the Dunkl kernel}}, Trans. Amer. Math. Soc.,355(2003),2413-2438.
\bibitem {rsl2} {\bf{M. R$\ddot{o}$sler}}, {\textit {Dunkl operators: theory and applications}}. In Orthogonal polynomials and special functions
(Leuven, 2002), volume 1817 of Lecture Notes in Math., pages 93–135. Springer, Berlin, 2003.
\bibitem {rlu} {\bf{R. Luca}}, {\textit{Regularity criteria with angular integrability for the Navier-Stokes
equation}}, Nonlinear Anal. 105 (2014), 24-40.
\bibitem {ls} {\bf{L. Saloff-Coste}} {\textit {Aspects of Sobolev-type inequalities}}, Volume 289, London Mathematical Society, Lecture note, Cambridge University Press, 2002.  
\bibitem{es} {\bf{E. Stein and G. Weiss}}, {\textit{Fractional integrals on n-dimensional euclidean space}}, J. Math. Mech. 7 (1958), 503-514.
\bibitem {gt} {\bf{G. Talenti}}, {\textit{Best constant in Sobolev inequality}}, Ann. Mat. Pura. Appl. 110 (1976), 353-372.
\bibitem {sy} {\bf{S. Thangavelu and Y. Xu}}, {\textit{Convolution operator and maximal function for the Dunkl transform}}, Journal d’Analyse Mathématique, 2005, Volume 97, Issue 1, pp 25–55.
\bibitem {sty} {\bf{S. Thangavelu and Y. Xu}}, {\textit{Riesz transform and Riesz potentials for Dunkl transform}}, J. Comput. Appl. Math. 199 (2007), 181-195. 
\bibitem {av} {\bf{A. Velicu}} , {\textit{Sobolev-type inequality for Dunkl operator}}, arXiv:1811.11118 [math.FA].
\end{thebibliography}
\end{document}